\newcommand{\R}{\mathbb{R}}
\newcommand{\N}{\mathbb{N}}
\newcommand{\EE}{\mathbb{E}}
\newcommand{\PP}{\mathbb{P}}
\newcommand{\NN}{\mathbb{N}}
\newcommand{\e}{\mathrm{e}}
\renewcommand{\i}{\mathrm{i}}
\renewcommand{\d}{\mathrm{d}}
\newcommand{\abs}[1]{{\left\vert #1 \right\vert}}
\renewcommand{\Re}{\operatorname{Re}}
\newcommand{\roundbra}[1]{\left(#1\right)}
\newcommand{\sqbra}[1]{\left[#1\right]}
\newcommand{\curlybra}[1]{\left\{#1\right\}}
\DeclareMathOperator*{\Res}{Res}
\DeclareMathOperator{\sign}{sign}
\newtheorem{theorem}{Theorem}
\newtheorem{corollary}[theorem]{Corollary}
\newtheorem{lemma}[theorem]{Lemma}
\newtheorem{proposition}[theorem]{Proposition}
\newtheorem{remark}[theorem]{Remark}
\newtheorem{definition}{Definition}
\newcommand{\uc}{%
      Institute for Theoretical Physics,
      University of Cologne,
      % Z\"ulpicher Stra{\ss}e 77,
      % Cologne, 
      Germany}
\newcommand{\ug}{
      % National Quantum Information Centre,
      Institute of Theoretical Physics and Astrophysics,
      University of Gda\'nsk,
      % ulica W{\l}adys{\l}awa Andersa 27,
      % 81-824 Sopot, 
      Poland}
\title{On products of Gaussian random variables}
\date{\today}
\author[1]{\v{Z}eljka Stojanac\thanks{E-mail: stojanac@thp.uni-koeln.de}}
\author[1]{Daniel Suess\thanks{E-mail: dsuess@thp.uni-koeln.de}}
\author[2]{Martin Kliesch\thanks{E-mail: science@mkliesch.eu}}
\affil[1]{\uc}
\affil[2]{\ug}
\begin{document}
%%% ========================================================

\maketitle
\thispagestyle{empty}

\begin{abstract}
Sums of independent random variables form the basis of many fundamental theorems in probability theory and statistics, and therefore, are well understood.
The related problem of characterizing products of independent random variables seems to be much more challenging.
In this work, we investigate such products of normal random variables, products of their absolute values, and products of their squares.
We compute power-log series expansions of their cumulative distribution function (CDF) based on the theory of Fox H-functions.
Numerically we show that for small arguments the CDFs are well approximated by the lowest orders of this expansion.
For the two non-negative random variables, we also compute the moment generating functions in terms of Meijer G-functions, and consequently, obtain a Chernoff bound for sums of such random variables.

\medskip
\noindent \textbf{Keywords:} 
Gaussian random variable, product distribution, Meijer G-function, Chernoff bound, moment generating function

\medskip
\noindent \textbf{AMS subject classifications:} 
60E99,
% PROBABILITY
33C60, %: Hypergeometric integrals and functions defined by them ($E$, $G$, $H$ and $I$ functions)
% DISTRIBUTION (under Statistics) 
62E15, %: Exact distribution theory
62E17 %: Approximations to distributions (nonasymptotic)
\\
\end{abstract}

\section{Introduction and motivation}
\label{sec:intro}
Compared to sums of independent random variables, our understanding of products is much less comprehensive.
Nevertheless, products of independent random variables arise naturally in many applications including channel modeling~\cite{laneman2000energy,4012470}, wireless relaying systems~\cite{4291825}, quantum physics (product measurements of product states), as well as signal processing.
Here, we are particularly motivated by a tensor sensing problem (see Ref.~\cite{SPIE_Gauss} for the basic idea).
In this problem we consider tensors
$T \in \mathbb{R}^{n_1 \times n_2 \times \cdots \times n_d}$
and wish to recover them from measurements of the form $y_i \coloneqq \langle A_i, T \rangle$ with the \emph{sensing tensors} also being of rank one,
$A_i = {a}_i^1 \otimes {a}_i^2 \otimes \cdots \otimes {a}_i^d$ with
$a_i^j \in \mathbb{R}^{n_j}$.
Additionally, we assumed that the entries of ${{a}_i^j}$ are iid
Gaussian random variables.
Applying such maps to properly normalized rank-one tensor results in a product of $d$ Gaussian random variables.

Products of independent random variables have already been studied for more than 50 years \cite{siap/springer66} but are still subject of ongoing research \cite{Products_Gaunt,Gau17,Stein_Op_Gaunt,Stoyanov2014}.
In particular, it was shown that the probability density function of a product of certain independent and identically distributed (iid) random variables from the exponential family can be written in terms of Meijer G-functions \cite{Distribution_of_Products_70}.

In this work, we characterize cumulative distribution functions (CDFs) and moment generating functions arising from products of iid normal random variables, products of their squares, and products of their absolute values.
We provide power-log series expansions of the CDFs of these distributions and demonstrate numerically that low-order truncations of these series provide tight approximations.
Moreover, we express the moment generating functions of the two latter distributions in terms of Meijer-G functions.
We state the corresponding Chernoff bounds, which bound the CDFs of sums of such products.
Finally, we find simplified estimates of the Chernoff bounds and illustrate them numerically.

\subsection{Simple bound}
We first state a simple but quite loose bound to the CDF of the product of iid squared standard Gaussian random variables.

 \begin{figure}
     \subfloat{
 \includegraphics[width=0.32\textwidth,trim={97 240 120 217}]{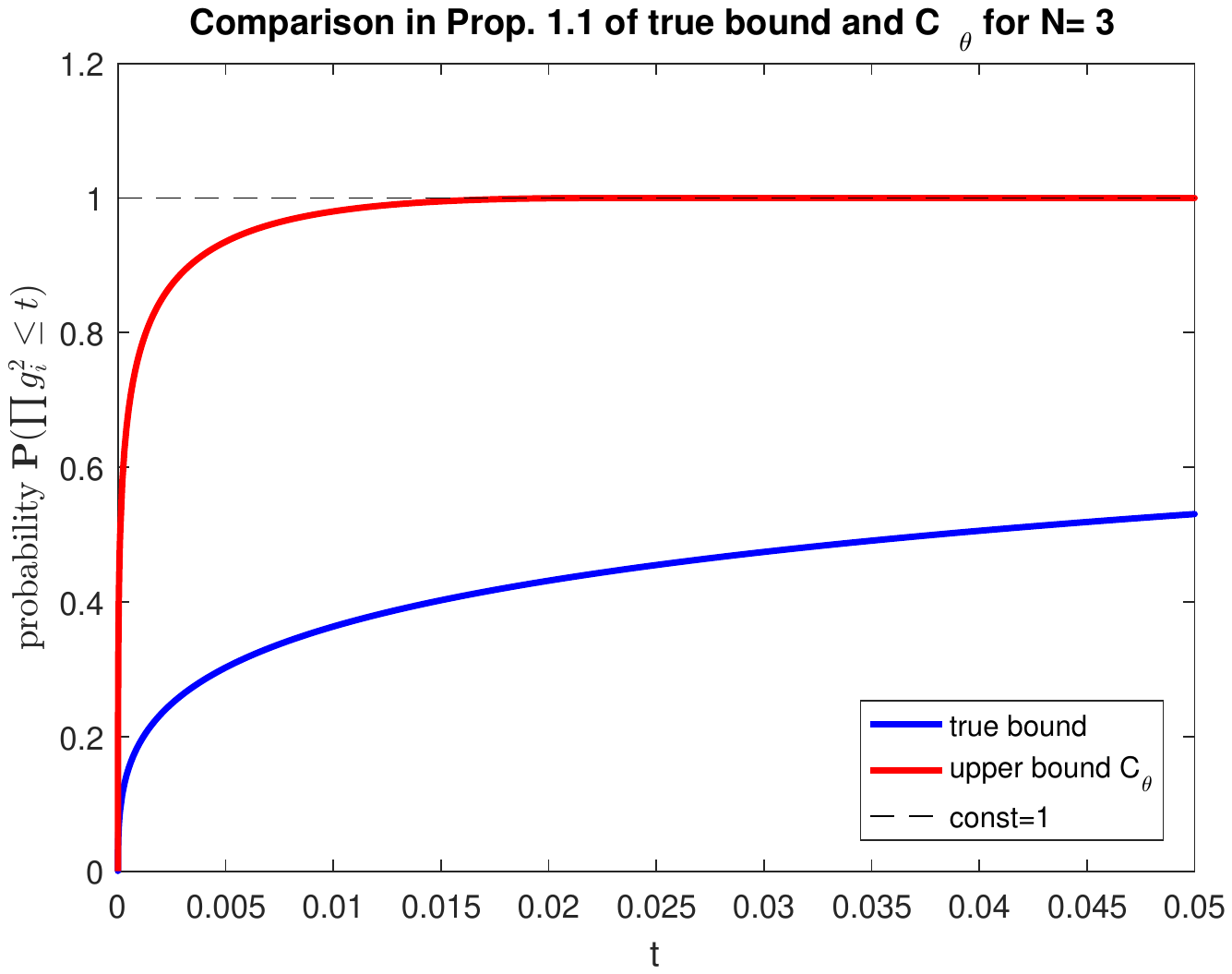}
     \hfill
 \includegraphics[width=0.32\textwidth,trim={97 240 120 217}]{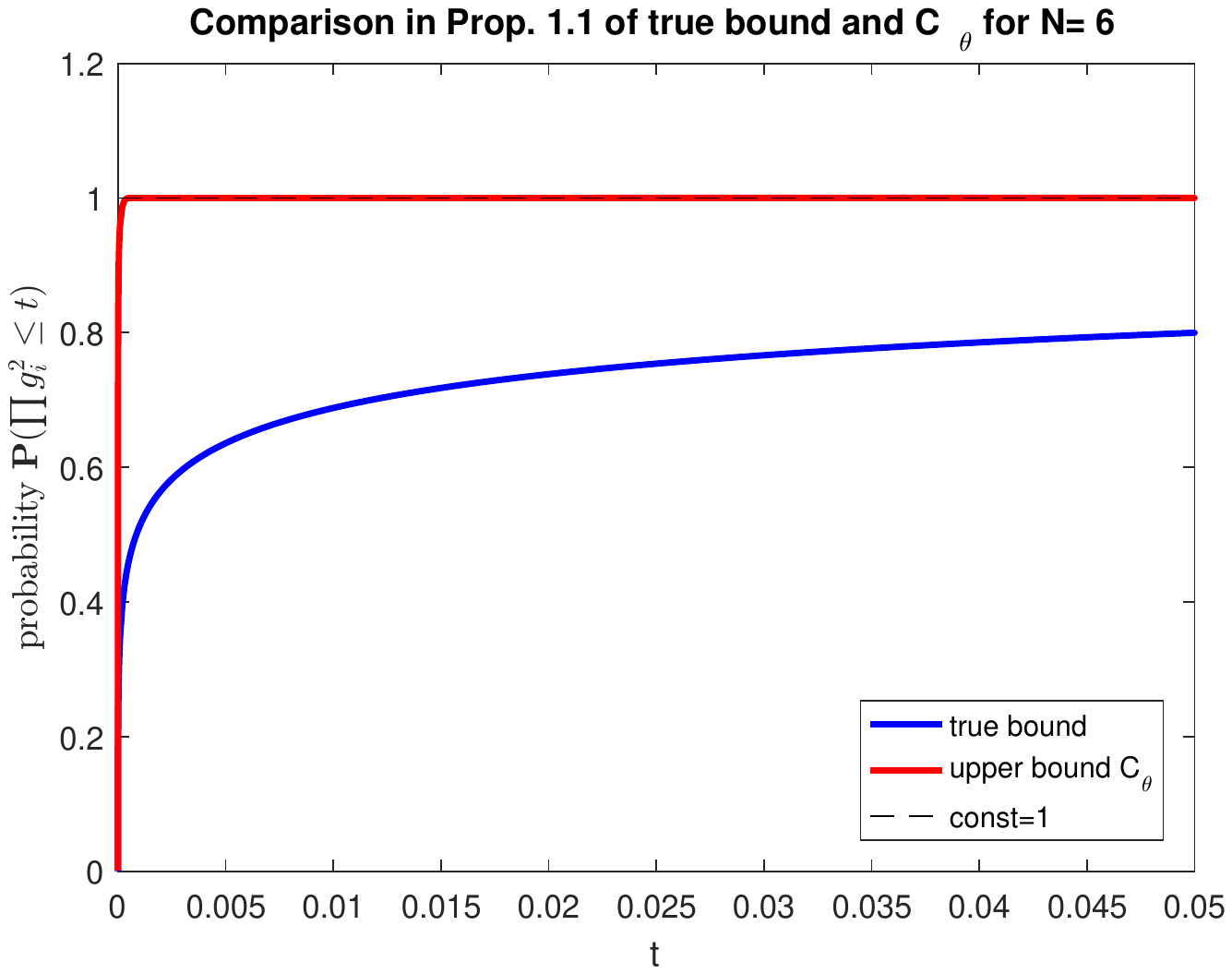}
 \hfill
\includegraphics[width=0.32\textwidth,trim={97 240 120 217}]{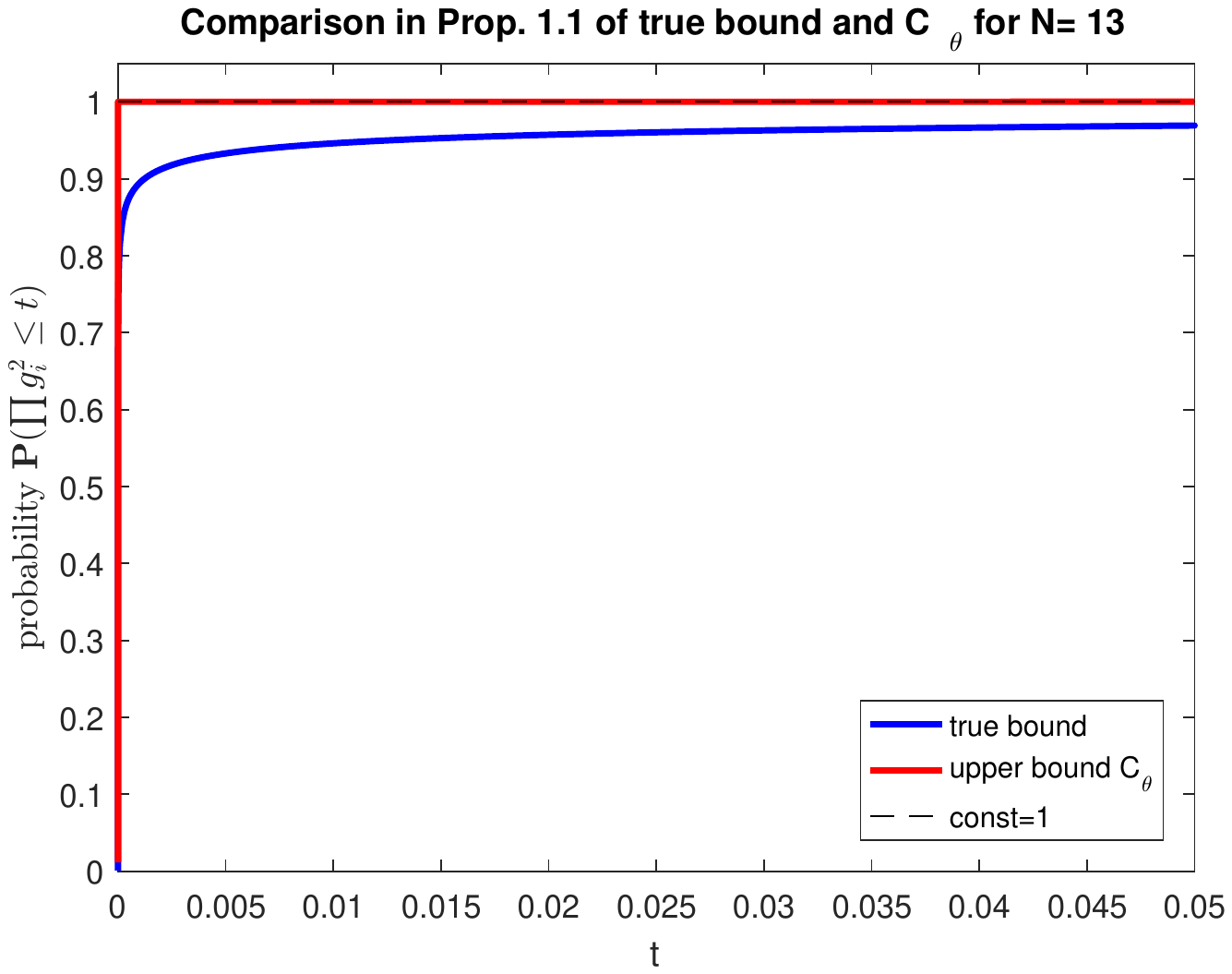}
     }
     \caption{%
       Comparison of the CDF $\mathbb{P}(\prod_{i=1}^N g_i^2 \leq t)$ (blue lines) with the bound stated in Proposition~\ref{lem:gaussian_product_measurements.one_eta} (red lines) for different values of $N$.
       Here, we chose $\theta$ for each value $t$ numerically from all possible values between $10^{-5}$ and $0.5-10^{-5}$ with step size $10^{-5}$ such that the right hand side in %Eq.~
       \eqref{eq:bound_from_chernoff} is minimal.
       \label{Fig:Chernoff}
     }
\end{figure}

\begin{proposition}
  \label{lem:gaussian_product_measurements.one_eta}
  Let $\{g_i\}_{i=1}^N$ be a set of iid standard Gaussian random variables (i.e., $g_i \sim \mathcal{N}(0, 1)$).
  Then
  \begin{equation}
    \label{eq:bound_from_chernoff}
    \PP\!\left[\prod_{i=1}^N g_i^2 \leq t \right] \le \inf_{\theta \in (0,1/2)} C_\theta^N t^\theta,
  \end{equation}
  where
  \begin{equation*}
    C_\theta = \frac{\Gamma(-\theta + \tfrac{1}{2})}{\sqrt\pi 2^\theta}>0.
  \end{equation*}
\end{proposition}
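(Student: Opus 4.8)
The plan is to run the usual Chernoff argument, but adapted to a \emph{lower} tail via a negative power. Writing $X \coloneqq \prod_{i=1}^N g_i^2$, note that $X$ is a positive random variable, so for every $\theta > 0$ the map $x \mapsto x^{-\theta}$ is strictly decreasing and the event $\{X \le t\}$ is identical to $\{X^{-\theta} \ge t^{-\theta}\}$. Markov's inequality applied to the non-negative variable $X^{-\theta}$ then gives
\begin{equation*}
  \PP\!\left[X \le t\right] = \PP\!\left[X^{-\theta} \ge t^{-\theta}\right] \le \frac{\EE\!\left[X^{-\theta}\right]}{t^{-\theta}} = t^\theta \, \EE\!\left[X^{-\theta}\right].
\end{equation*}
This single step already produces the factor $t^\theta$ on the right-hand side of \eqref{eq:bound_from_chernoff}, and it reduces everything to estimating the negative moment $\EE[X^{-\theta}]$.

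Next I would exploit independence to factorize. Since $X^{-\theta} = \prod_{i=1}^N \abs{g_i}^{-2\theta}$ and the $g_i$ are iid, the expectation splits as $\EE[X^{-\theta}] = \EE[\abs{g}^{-2\theta}]^N$ for a single standard Gaussian $g$, so the whole problem collapses to one negative fractional absolute moment. To compute $\EE[\abs{g}^s]$ I would write it as $\tfrac{2}{\sqrt{2\pi}}\int_0^\infty x^s \e^{-x^2/2}\,\d x$, substitute $u = x^2/2$, and recognize the resulting integral as a Gamma integral, yielding $\EE[\abs{g}^s] = \tfrac{2^{s/2}}{\sqrt\pi}\,\Gamma\!\left(\tfrac{s+1}{2}\right)$. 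Setting $s = -2\theta$ reproduces exactly $C_\theta = \Gamma(-\theta+\tfrac12)/(\sqrt\pi\,2^\theta)$, whence $\EE[X^{-\theta}] = C_\theta^N$ and the bound follows after taking the infimum over admissible $\theta$.

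The one genuine subtlety — and the source of the restriction $\theta \in (0,1/2)$ — is the convergence of this moment integral near the origin. The integrand behaves like $x^{-2\theta}$ as $x \to 0$, so $\EE[\abs{g}^{-2\theta}]$ is finite precisely when $-2\theta > -1$, i.e. $\theta < 1/2$; for $\theta \ge 1/2$ the negative moment diverges and the estimate is vacuous, which is exactly why the infimum is taken over the open interval. Positivity $C_\theta > 0$ is then immediate, since $\Gamma$ is positive on $(0,\tfrac12)$. I expect no obstacle beyond carefully matching this convergence window to the stated range of $\theta$; the argument is otherwise elementary and short.
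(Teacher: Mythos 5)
Your argument is correct and is essentially the paper's own proof in different clothing: applying Markov's inequality to $X^{-\theta}$ is exactly the Chernoff bound applied to $\sum_i \log(g_i^2) \le \log t$, and your computation of $\EE[\abs{g}^{-2\theta}]$ via the substitution $u = x^2/2$, including the identification of the convergence window $\theta < 1/2$, matches the paper's calculation line for line. Nothing is missing.
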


\begin{proof}
  For $\theta <\tfrac{1}{2}$ the moment generating function of $\log(g_i^2)$ is given by\footnote{%
    Throughout the article, $\log(x)$ denotes the natural logarithm of $x$.
  }
\begin{equation}
\begin{aligned}
    \EE \e^{- \theta \log g_i^2}
    &= \EE\bigl[ |g_i|^{-2 \theta} \bigr]
    = \frac{1}{\sqrt{2\pi}} \int_\mathbb{R} \abs{x}^{-2\theta} \e^{-\frac{x^2}{2}} \d x
    \\
    &= \frac{2^{-\theta}}{\sqrt{\pi}} \int_0^\infty z^{-\theta - \tfrac{1}{2}} \e^{-z} \d z
    = \frac{2^{-\theta} \, \Gamma(-\theta + \tfrac{1}{2})}{\sqrt{\pi}},
\end{aligned}
\end{equation}
  where we have used the substitution $z = \frac{x^2}{2}$.
  Note that for $\theta \geq \tfrac{1}{2}$ the integral diverges.
  Now, the proposition is a simple consequence of Chernoff bound:
\begin{equation}
  \begin{aligned}
    \PP\left( \prod_{i=1}^N g_i^2 \le t \right)
    &= \PP\left( \sum_{i=1}^N \log(g_i^2) \le \log t \right)
    \\
    &\le \inf_{\theta > 0} \e^{\theta \log t} \left(  \EE \e^{-\theta \log g_i^2}  \right)^N \\
    &= \pi^{-\tfrac{N}{2}} \, \inf_{\theta \in (0, \tfrac{1}{2})} \left(  \frac{t}{2^N}  \right)^\theta \Gamma(-\theta + \tfrac{1}{2})^N .
  \end{aligned}
\end{equation}
\end{proof}

Figure~\ref{Fig:Chernoff} shows that this bound is indeed quite loose.
This holds in particular for values of $t$ that are very small.
However, since Proposition~\ref{lem:gaussian_product_measurements.one_eta} is based on Chernoff inequality -- a tail bound on sums of random variables -- one cannot expect good results for those intermediate values.
As a matter of fact, the upper bound in 
% Eq.~
\eqref{eq:bound_from_chernoff} becomes slightly larger than the trivial bound $\PP(Y \leq t) \leq 1$ already for quite small values of $t$.
Deriving a good approximations for such values $t$ and any $N$ is in the focus of the next section.

\section{Power-log series expansion for cumulative distribution functions}
Throughout the article, we use the following notation.
Let $N \in \mathbb{N}$ and denote by $\{g_{i}\}_{i=1}^N$  a set of iid standard Gaussian random variables (i.e., $g_i \sim \mathcal N(0, 1)$, for all $i \in [N]$).
We denote the random variables considered in this work by
\begin{equation}
X\coloneqq \prod_{i=1}^N g_i,
\qquad
Y\coloneqq \prod_{i=1}^N g_i^2,
\qquad
Z\coloneqq \prod_{i=1}^N |g_i|.
\end{equation}

The probability density function of $X$ can be written in terms of  Meijer G-functions~\cite{Distribution_of_Products_70}.
We provide a similar representation for the CDF of $X$, $Y$, and $Z$ as well as the corresponding power-log series expansion using the theory developed for the more general Fox $H$-functions~\cite{Htransform}.
It is worth noting that series of Meijer-G functions have already been extensively studied. For example, in \cite{Luke} the expansions in series of Meijer G-functions as well as in series of Jacobi and Chebyshev's polynomials have been provided. 
In this work, we investigate the power-log series expansion of special instances of Meijer-G functions.
An advantage of this expansion is that it does not contain any special functions (excpet derivatives of the Gamma function at $1$). 
For the sake of completeness, we give a brief introduction on Meijer G-functions together with some of their properties in Appendix~\ref{sec:intro_meijer_g};
see Refs.~\cite{Distribution_of_Products_70, Products_Gaunt} for more details.

The following proposition is a special case of a result by Cook~\cite[p.\ 103]{Coo81} which relies on the Mellin transform.
However, for this particular case we provide an elementary proof.

\begin{proposition}[CDFs in terms of Meijer G-functions]
\label{prop:CDF}
Let $\{g_i\}_{i=1}^N $ be a set of independent identically distributed standard Gaussian random variables (i.e., $g_i \sim \mathcal{N}(0,1)$) and $X \coloneqq \prod_{i=1}^N g_i$, $Y \coloneqq \prod_{i=1}^N g_i^2$, $Z \coloneqq \prod_{i=1}^N |g_i|$ with $N \in \mathbb{N}$.
Define the function $\mathcal{G}_{\alpha}$ by
\begin{equation}\label{eq:g_alpha}
\mathcal{G}_{\alpha}(z)
\coloneqq
1 - \frac{1}{2^{\alpha}} \cdot \frac{1}{\pi^{\frac{N}{2}}}    G_{N+1, 1}^{0,N+1} \left(z \Big|  \begin{matrix} 1,1/2,\ldots,1/2 \\ 0 \end{matrix} \right).
\end{equation}
Then, for any $t> 0$,
\begin{align}
\PP\roundbra{X \leq t}&= \PP\roundbra{X \geq -t}  = \mathcal{G}_{1}\roundbra{\frac{2^N}{t^2}}
\\
\PP\roundbra{Y \leq t} &=  \mathcal{G}_{0}\roundbra{\frac{2^N}{t}}
\\
\PP\roundbra{Z \leq t} &= \mathcal{G}_{0}\roundbra{\frac{2^N}{t^2}}.
\end{align}
\end{proposition}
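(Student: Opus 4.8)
The plan is to establish the formula for $Y=\prod_{i=1}^N g_i^2$ by the Mellin-transform method and then to read off the other two identities from elementary relations. Indeed, $Z=\sqrt{Y}$ gives $\PP(Z\le t)=\PP(Y\le t^2)$, and the symmetry of each $g_i$ makes $X$ symmetric about $0$, so $\PP(X\le t)=\PP(X\ge -t)$ and, for $t>0$, $\PP(X\le t)=\tfrac12+\tfrac12\,\PP(Z\le t)$. Feeding the $Y$-formula into these relations and using the factor $2^{-\alpha}$ (with $\alpha=1$ for $X$ and $\alpha=0$ for $Y$ and $Z$) to absorb the $\tfrac12$ reproduces all three displayed equations. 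So everything reduces to one CDF computation.

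For that computation, I would first exploit that $Y$ is a product of iid \emph{positive} variables $g_i^2$, so its Mellin transform — equivalently, its moment function — factorizes. Reusing the Gaussian integral from the proof of Proposition~\ref{lem:gaussian_product_measurements.one_eta} (with $-\theta=s-1$) gives
\begin{equation*}
  \EE\bigl[Y^{s-1}\bigr]=\roundbra{\EE\bigl[|g_1|^{2(s-1)}\bigr]}^{N}
  =\roundbra{\frac{2^{s-1}}{\sqrt\pi}\,\Gamma\roundbra{s-\tfrac12}}^{N},
  \qquad \Re s>\tfrac12 .
\end{equation*}
Inverting the Mellin transform represents the density as the Mellin--Barnes integral $f_Y(y)=\frac{1}{2\pi\i}\int_{(c)}\EE[Y^{s-1}]\,y^{-s}\,\d s$, and integrating against $\d y$ on $(0,t)$ yields
\begin{equation*}
  \PP(Y\le t)=\int_0^t f_Y(y)\,\d y
  =\frac{1}{2\pi\i}\int_{(c)}\EE\bigl[Y^{s-1}\bigr]\,\frac{t^{1-s}}{1-s}\,\d s ,
\end{equation*}
where I take $\tfrac12<c<1$ precisely so that $\int_0^t y^{-s}\,\d y=\tfrac{t^{1-s}}{1-s}$ converges and Fubini is legitimate.

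The substitution $u=s-1$ (so the line sits in $\Re u\in(-\tfrac12,0)$) together with $\Gamma(u)/\Gamma(1+u)=1/u$ turns the last integral into
\begin{equation*}
  \PP(Y\le t)=-\frac{1}{\pi^{N/2}}\cdot\frac{1}{2\pi\i}\int_{(c-1)}\frac{\Gamma\roundbra{u+\tfrac12}^{N}}{u}\,\roundbra{\frac{2^N}{t}}^{u}\,\d u .
\end{equation*}
The integrand is exactly that of the target Meijer $G$-function $G_{N+1,1}^{0,N+1}$ evaluated at $z=2^N/t$; the only difference is the contour, since the Meijer $G$-function is defined with all left poles $u=0,-\tfrac12,-\tfrac32,\dots$ lying to the left of the line, whereas mine separates the simple pole at $u=0$ on its right. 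Shifting onto the canonical contour crosses that single pole, and with $\operatorname{Res}_{u=0}\Gamma(u+\tfrac12)^{N}z^{u}/u=\Gamma(\tfrac12)^N=\pi^{N/2}$ one gets $\frac{1}{2\pi\i}\int_{(c-1)}=G_{N+1,1}^{0,N+1}(z\mid\cdots)-\pi^{N/2}$. Multiplying by $-\pi^{-N/2}$ produces the additive constant $1$ and hence $\PP(Y\le t)=\mathcal G_0(2^N/t)$, as claimed.

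The hard part will be the final contour bookkeeping rather than any of the integrals. Concretely, I must justify the inverse-Mellin representation of $f_Y$ (absolute convergence follows from Stirling, since $\Gamma(u+\tfrac12)^N$ decays exponentially along vertical lines), the interchange of $\int_0^t\d y$ with the Mellin--Barnes integral, and — most delicately — the contour shift onto the standard Meijer $G$-function contour while tracking exactly which pole is crossed. This last step is what distinguishes the correct answer $1-\pi^{-N/2}G(\cdots)$ from $-\pi^{-N/2}G(\cdots)$ and pins down the normalization; it is also where the pole-separation convention for $G_{N+1,1}^{0,N+1}$ (together with the restriction $z>0$ ensuring convergence of the $G$-integral) must be invoked with care.
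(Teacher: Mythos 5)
Your proposal is correct, but it follows a genuinely different route from the paper's. You reduce $X$ and $Z$ to $Y$ exactly as the paper does (via $\PP(Z\le t)=\PP(Y\le t^2)$ and $\PP(X\le t)=\tfrac12+\tfrac12\PP(Y\le t^2)$, with the $2^{-\alpha}$ absorbing the extra $\tfrac12$), but for the core identity for $Y$ the paper deliberately avoids the Mellin transform: in Lemma~\ref{lemma:CDFofY} it starts from the known density $f_X(x)=(2\pi)^{-N/2}G^{N,0}_{0,N}(x^2/2^N\,|\,0)$, writes $\PP(Y\le t)=1-2\int_{\sqrt t}^{\infty}f_X$, evaluates that integral with the tabulated identity \eqref{int1xx-1G} (after checking its validity conditions), and then massages the result with the shift property \eqref{zrhoG} and the inversion property \eqref{reverseG}. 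Your argument is instead the Mellin--Barnes route that the paper attributes to Cook and explicitly sidesteps: you compute $\EE[Y^{s-1}]=\bigl(2^{s-1}\Gamma(s-\tfrac12)/\sqrt\pi\bigr)^N$ from scratch (correctly; it matches the paper's moment computation with $-\theta=s-1$), invert, integrate over $(0,t)$ on a line $\tfrac12<c<1$, and identify the resulting integrand with $\mathcal H^{0,N+1}_{N+1,1}$ from \eqref{Hmnpq}, the crossing of the simple pole at $u=0$ with residue $\Gamma(\tfrac12)^N=\pi^{N/2}$ producing precisely the additive constant $1$ in $\mathcal G_0$. The bookkeeping you flag checks out: the simplified integrand $\Gamma(u+\tfrac12)^N z^u/u$ has only the pole at $u=0$ and the order-$N$ poles at $u=-\tfrac12-k$, your contour separates exactly the former from the canonical $G$-contour, and exponential decay of $\Gamma(u+\tfrac12)^N$ along vertical lines justifies the inversion, the Fubini step, and the shift. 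What your approach buys is self-containedness --- no need to import the product-density formula or the integral table \eqref{int1xx-1G} --- at the price of the analytic justifications you list; what the paper's buys is an ``elementary'' proof in the sense of being a finite chain of standard $G$-function identities, which is exactly the trade-off the authors describe when citing Cook.
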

In Ref.~\cite{SPIE_Gauss} we have provided a proof of the above proposition by deriving first a result for the random variable $X$. The results for random variables $Y$ and $Z$ then follow trivially. For completeness, we derive a proof for a random variable $Y$ in the Appendix~\ref{App:Proofs} (see Lemma~\ref{lemma:CDFofY}).
The statements for $X$ and $Z$ are  simple consequences of this result, since for $t>0$,
\begin{align}
\PP (Z \leq t) &= \PP (Z^2 \leq t^2) =  \PP (Y \leq t^2) \, , \label{eq:relate_Y_Z}
\\\nonumber
\PP (X \leq t)
	&= \frac{1}{2}\roundbra{2\PP (X \leq t)}
	= \frac{1}{2} \roundbra{\PP (X \leq t) + 1 - \PP (X \leq -t)}
\\
	&= \frac{1}{2} \roundbra{\PP (-t \leq X \leq t) + 1}
	=
	\frac{1}{2} \roundbra{\PP (X^2 \leq t^2) + 1}
	= \frac{1}{2} \roundbra{\PP (Y \leq t^2) + 1}. \label{eq:relate_X_Y}
\end{align}

Now we are ready to present the main result of this section.
The proof is based on the theory of power-log series of Fox H-functions~\cite[Theorem 1.5]{Htransform}.

\begin{theorem}[CDFs as power-log series expansion]
\label{thm:CDF_as_mejier_g}
Let $\{g_i\}_{i=1}^N $ be a set of independent identically distributed standard Gaussian random variables (i.e., $g_i \sim \mathcal{N}(0,1)$) and $X \coloneqq \prod_{i=1}^N g_i$, $Y \coloneqq \prod_{i=1}^N g_i^2$, $Z \coloneqq \prod_{i=1}^N |g_i|$ with $N \in \mathbb{N}$.
Define the function $f_{\nu,\xi}$ by
\begin{equation}\label{eq:power-log-series}
f_{\nu, \xi}(u)
\coloneqq
\nu + \frac{1}{2^{\xi}} \cdot \frac{1}{\pi^{N/2}} \sum_{k=0}^{\infty} u^{-1/2-k} \sum_{j=0}^{N-1} H_{kj} \cdot \sqbra{\log u}^j
\end{equation}
with
\begin{align}
H_{kj}
&\coloneqq
\frac{(-1)^{Nk}}{j!} \,
\sum_{n=j}^{N-1}
	\roundbra{\frac{1}{2}+k}^{-(n-j+1)}
	\nonumber \\ \label{HkjFinal}
& \quad \cdot
	\sum_{j_1+\ldots+j_N=N-1-n} \ \prod_{t=1}^N
		\curlybra{\sum_{\ell_1+\ldots+\ell_{k+1}=j_t} \frac{\Gamma^{(\ell_{k+1})}(1)}{\ell_{k+1}!} \curlybra{\prod_{i=1}^{k-1} \roundbra{k-i+1}^{-(\ell_i+1)}}}
\end{align}
and with $j_i \in \NN_0$ and $\ell_i\in \NN_0$.
Then, for any $t> 0$,
\begin{align}
\PP\roundbra{X \leq t}&= \PP\roundbra{X \geq -t}
=
f_{1/2,1}\Bigl(\frac{2^N}{t^2}\Bigr) \, , \label{eq:X_expansion}
\\
\PP\roundbra{Y \leq t} &=
f_{0,0}\Bigl(\frac{2^N}{t}\Bigr)  \, ,\label{eq:Y_expansion}
\\
\PP\roundbra{Z \leq t} &=
f_{0,0}\Bigl(\frac{2^N}{t^2}\Bigr) \, .\label{eq:Z_expansion}
\end{align}
\end{theorem}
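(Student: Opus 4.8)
The plan is to reduce all three claimed identities to a single power-log expansion of the Meijer $G$-function appearing in Proposition~\ref{prop:CDF}, and then to obtain that expansion from the residue series of \cite[Theorem~1.5]{Htransform}. Writing $G(z)$ for the function $G_{N+1,1}^{0,N+1}$ occurring in $\mathcal{G}_\alpha$, the definition \eqref{eq:g_alpha} gives $\mathcal{G}_\alpha(z) = 1 - 2^{-\alpha}\pi^{-N/2}G(z)$, so that $\mathcal{G}_1(z) = \tfrac12 + \tfrac12\mathcal{G}_0(z)$; likewise \eqref{eq:power-log-series} gives $f_{1/2,1}(u) = \tfrac12 + \tfrac12 f_{0,0}(u)$. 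Hence, once I establish the single functional identity $\mathcal{G}_0(z) = f_{0,0}(z)$ on $(0,\infty)$, the claims \eqref{eq:Y_expansion} and \eqref{eq:Z_expansion} follow at once by evaluating it at $2^N/t$ and $2^N/t^2$, while \eqref{eq:X_expansion} follows from $\mathcal{G}_1 = \tfrac12 + \tfrac12\mathcal{G}_0 = \tfrac12 + \tfrac12 f_{0,0} = f_{1/2,1}$. Everything therefore comes down to expanding $G$.

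For the core identity I would start from the Mellin--Barnes representation of $G(z)$. With $m=0$, $n=p=N+1$, $q=1$ and the given parameters, the integrand is $\Gamma(-s)\Gamma(\tfrac12-s)^N z^{-s}/\Gamma(1-s)$, which simplifies via $\Gamma(1-s)=-s\,\Gamma(-s)$ to
\[
G(z) = -\frac{1}{2\pi\i}\int_L \frac{\Gamma(\tfrac12-s)^N}{s}\,z^{-s}\,\d s .
\]
Its only singularities are a simple pole at $s=0$ and poles of order $N$ at $s=\tfrac12+k$, $k\in\N$, the latter coming from $\Gamma(\tfrac12-s)^N$. The large-$z$ regime is exactly $u=2^N/t^2\to\infty$, i.e.\ small $t$, and the expansion there is the sum of the residues at these poles; its convergence for the relevant range of $z$, and the admissibility of summing residues term by term, are precisely what \cite[Theorem~1.5]{Htransform} supplies. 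The simple pole at $s=0$ contributes $\Gamma(\tfrac12)^N=\pi^{N/2}$, and after multiplication by $-\pi^{-N/2}$ in $\mathcal{G}_0 = 1 - \pi^{-N/2}G$ this constant cancels the explicit $1$, leaving the constant $\nu=0$ of $f_{0,0}$ (for $X$ it leaves $\nu=\tfrac12$). The order-$N$ poles must then reproduce the power-log series.

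The remaining and most laborious step is to evaluate the order-$N$ residue at $s=\tfrac12+k$ and to match it with \eqref{HkjFinal}. Substituting $s=\tfrac12+k+\eps$ and using the recurrence $\Gamma(-k-\eps)=\Gamma(1-\eps)\big/\prod_{m=0}^{k}(-m-\eps)$ isolates the factor $\eps^{-N}$ together with an analytic remainder, so that the residue is the coefficient of $\eps^{N-1}$ in a product of three Taylor series: $z^{-\eps}=\sum_j \frac{(-\log z)^j}{j!}\eps^j$, which supplies the powers $[\log z]^j$ up to $j=N-1$; the geometric factor $(\tfrac12+k+\eps)^{-1}$, which supplies the factors $(\tfrac12+k)^{-(n-j+1)}$; and the $N$-th power of the analytic part of $\Gamma(-k-\eps)$, whose coefficients are convolutions of the Taylor coefficients of $\Gamma(1-\eps)$, producing the derivatives $\Gamma^{(\ell)}(1)$, with those of $\prod_{m=1}^{k}(m+\eps)^{-1}$, producing the factors $(k-i+1)^{-(\ell_i+1)}$. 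Distributing the total degree $N-1$ across these factors yields exactly the nested sums $\sum_{n=j}^{N-1}$, $\sum_{j_1+\cdots+j_N=N-1-n}$ and $\sum_{\ell_1+\cdots+\ell_{k+1}=j_t}$ of \eqref{HkjFinal}. The main obstacle is precisely this bookkeeping -- tracking the multinomial distribution of the pole order across the $N$ coincident Gamma factors, identifying the coefficients as $\Gamma^{(\ell)}(1)$, and reconciling the global sign $(-1)^{Nk}$ after combining the signs from $\Gamma(-k-\eps)^N$, the factor $-1/s$, and the contour orientation -- rather than any single hard analytic estimate.
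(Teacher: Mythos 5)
Your proposal is correct and follows essentially the same route as the paper: reduce all three identities to Proposition~\ref{prop:CDF} plus a single residue expansion of $G_{N+1,1}^{0,N+1}$, simplify the Mellin--Barnes integrand to $-\Gamma(1/2-s)^N/s$, extract the constant $\pi^{N/2}$ from the simple pole at $s=0$, and obtain $H_{kj}$ from the order-$N$ residues at $s=1/2+k$ by expanding $z^{-\eps}$, $(1/2+k+\eps)^{-1}$, and the analytic part of $\Gamma(-k-\eps)^N$ --- your coefficient-of-$\eps^{N-1}$ extraction is the same computation the paper performs via the Leibniz rule in Lemmas~\ref{lem:ResNonSimple}--\ref{lemma:limfder}. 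The only quibble is notational: the order-$N$ poles occur for $k\in\NN_0$ (the dominant $k=0$ term at $s=1/2$ included), not $k\in\NN$.
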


In Figure~\ref{Fig:ComparisonLog} we compare the CDF of $X$, $Y$, and $Z$.
Moreover, we compare the approximations obtained from truncating the power-log series~\eqref{eq:power-log-series} at low orders:
taking only the leading term into account (i.e.\ $k=0$), we already obtain a  good approximation to the CDF for relatively small values of $t$.
Furthermore, the error decreases with an increasing number of factors $N$.
Truncating the power-log series at the next highest order $k=1$ yields an error that we can only resolve in the log-error-plot as shown in the insets of Figure~\ref{Fig:ComparisonLog}.
Finding explicit error bounds is still an open problem.
The main difficulty seems to be that the series \eqref{HkjFinal}
contains large terms with alternating signs given by $\sign\bigl(\Gamma^{(\ell)}(1)\bigr) = (-1)^{\ell}$ that cancel out, so that the CDFs give indeed a value in $[0,1]$.

%%%%%%%%%%%%%%%%%%%%%%%%
% \newpage
% $\ $ \vspace{1cm} $\ $
\afterpage{%
\newgeometry{left = 2cm, right = 2cm, top = 4cm}
\begin{figure}[hp!]
  \subfloat[$X=\prod_{i=1}^N g_i$]{
    \includegraphics[width=0.32\textwidth,trim={97 240 120 217}]{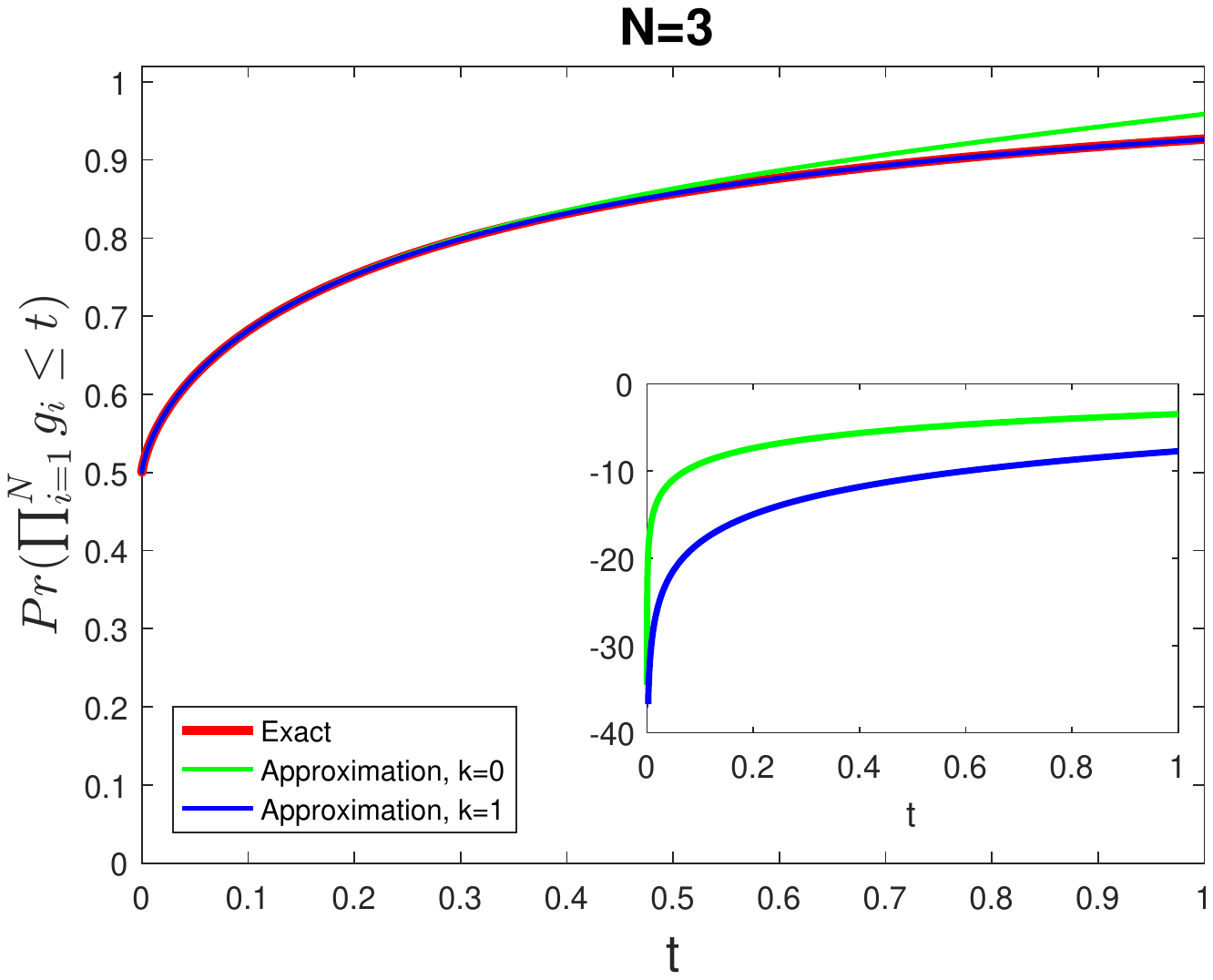}
    \hfill
    \includegraphics[width=0.32\textwidth,trim={97 240 120 217}]{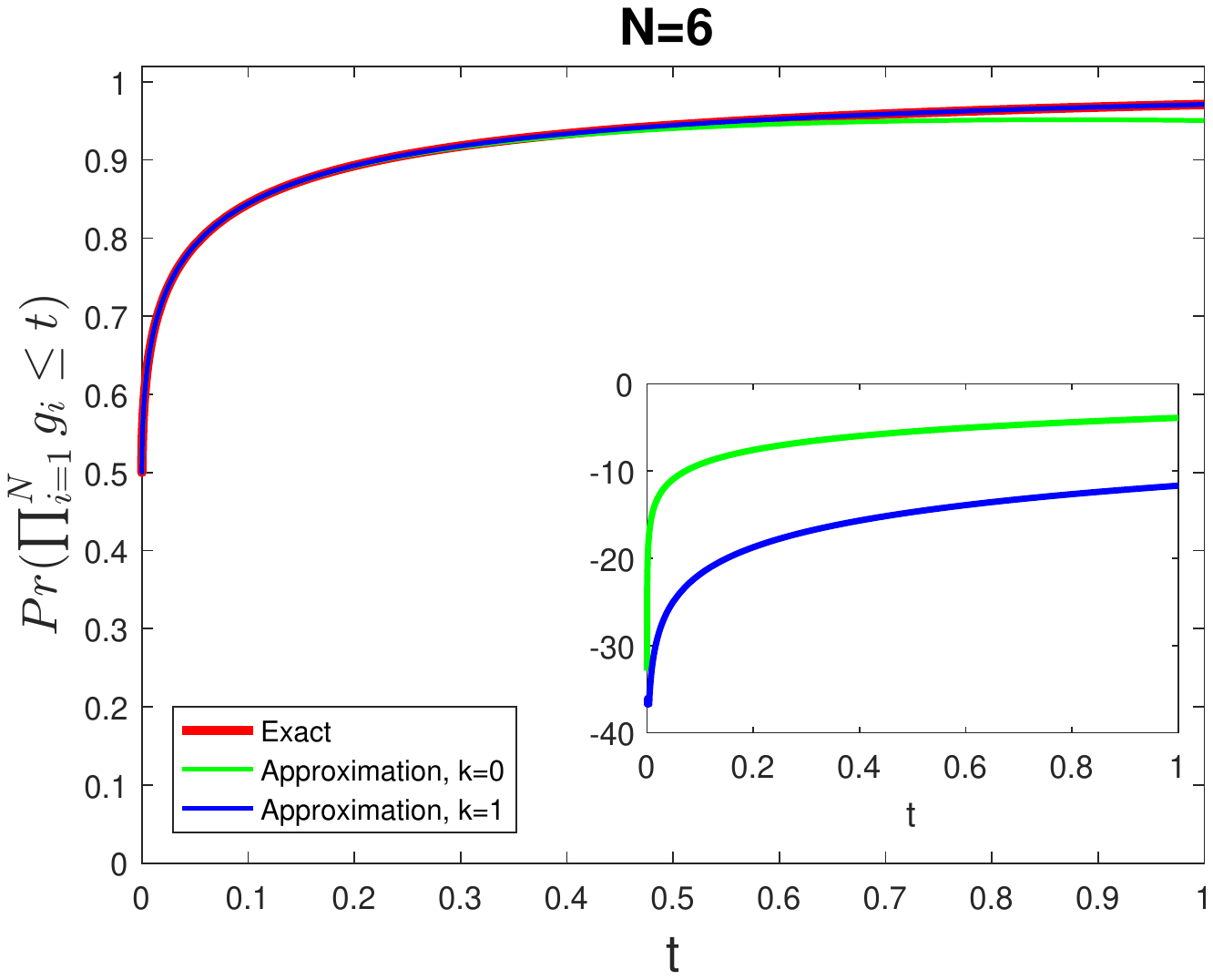}
    \hfill
    \includegraphics[width=0.32\textwidth,trim={97 240 120 217}]{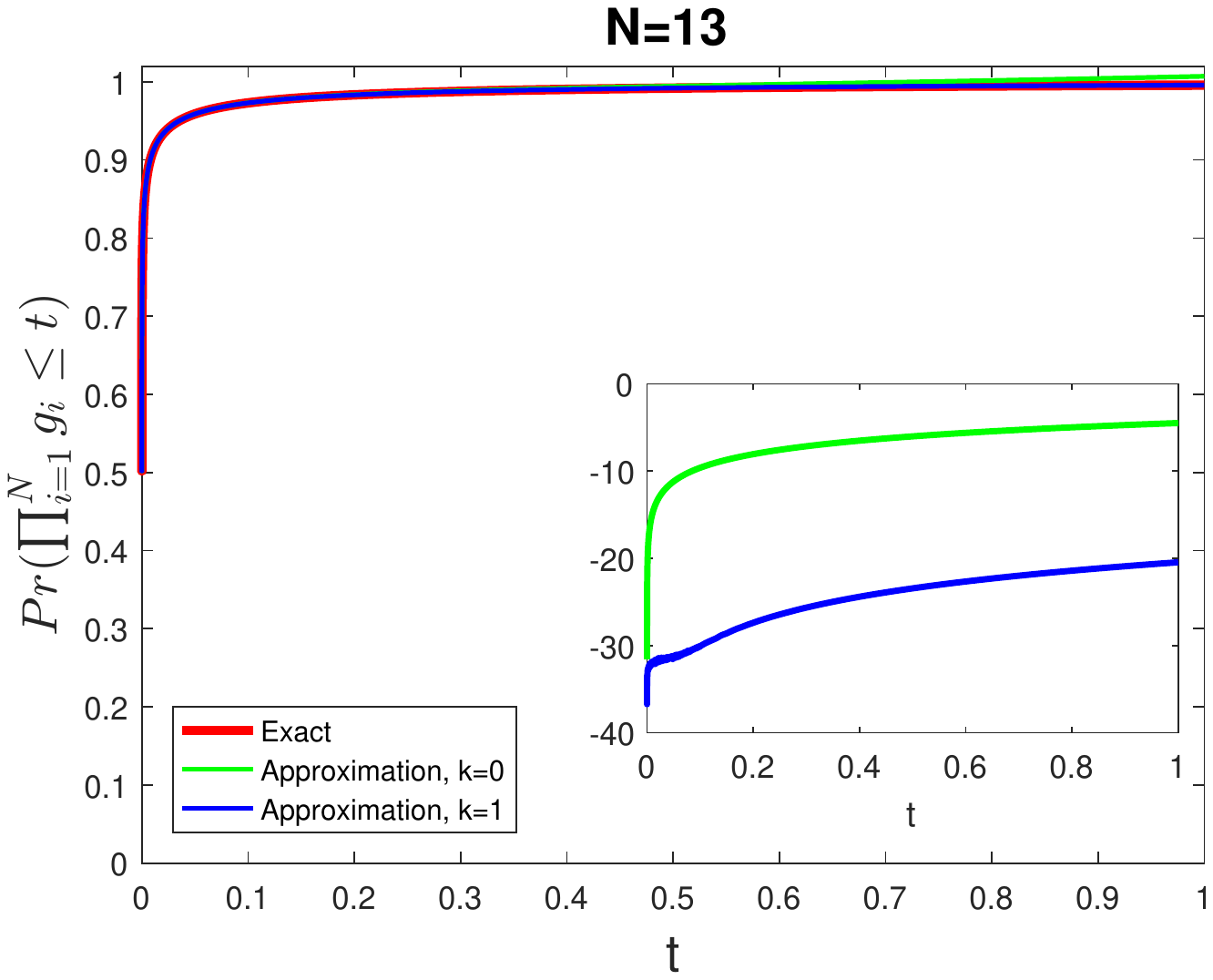}
  } \\
  \subfloat[$Y=\prod_{i=1}^N g_i^2$]{ %
    \includegraphics[width=0.32\textwidth,trim={97 240 120 217}]{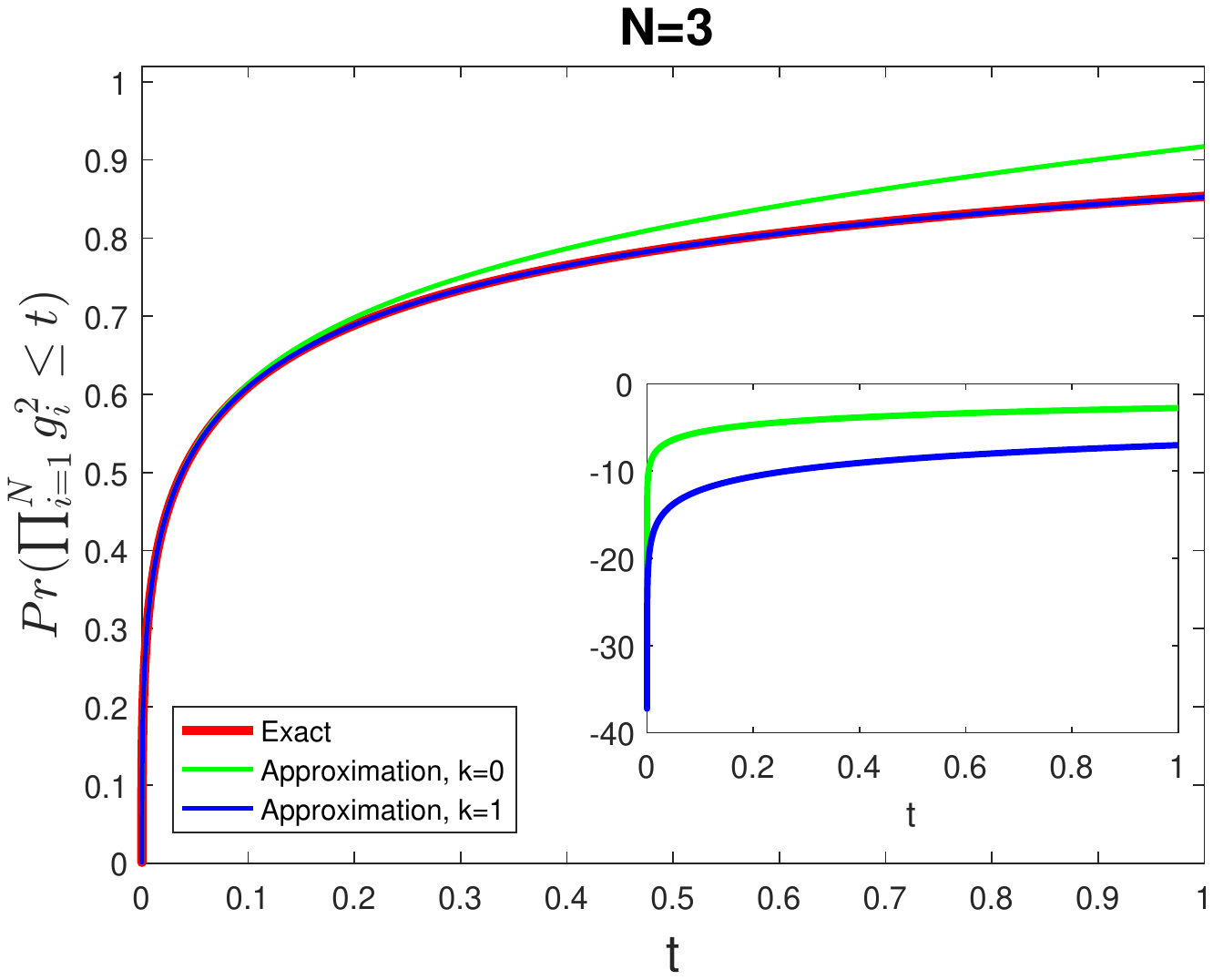}
    \hfill
    \includegraphics[width=0.32\textwidth,trim={97 240 120 217}]{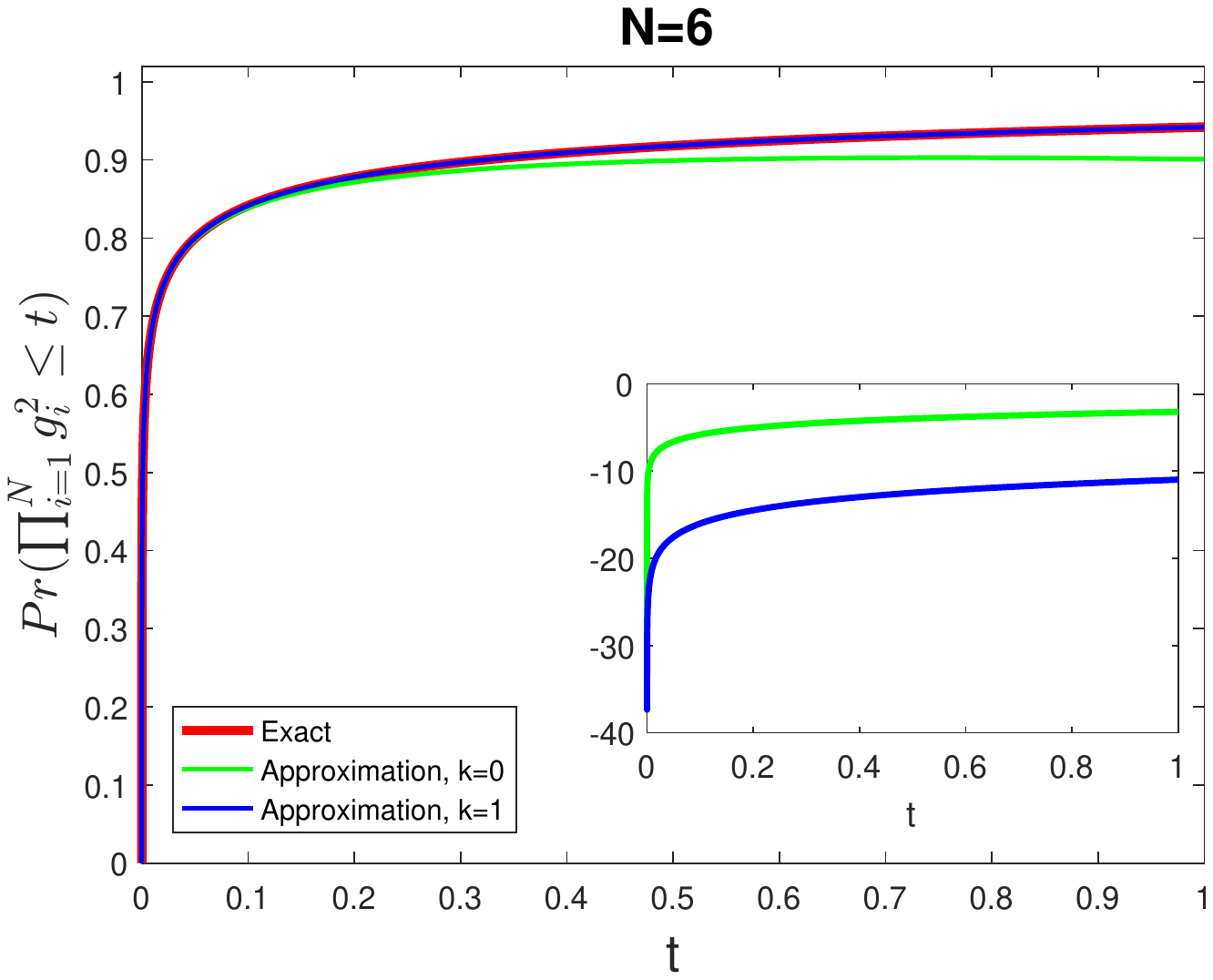}
    \hfill
    \includegraphics[width=0.32\textwidth,trim={97 240 120 217}]{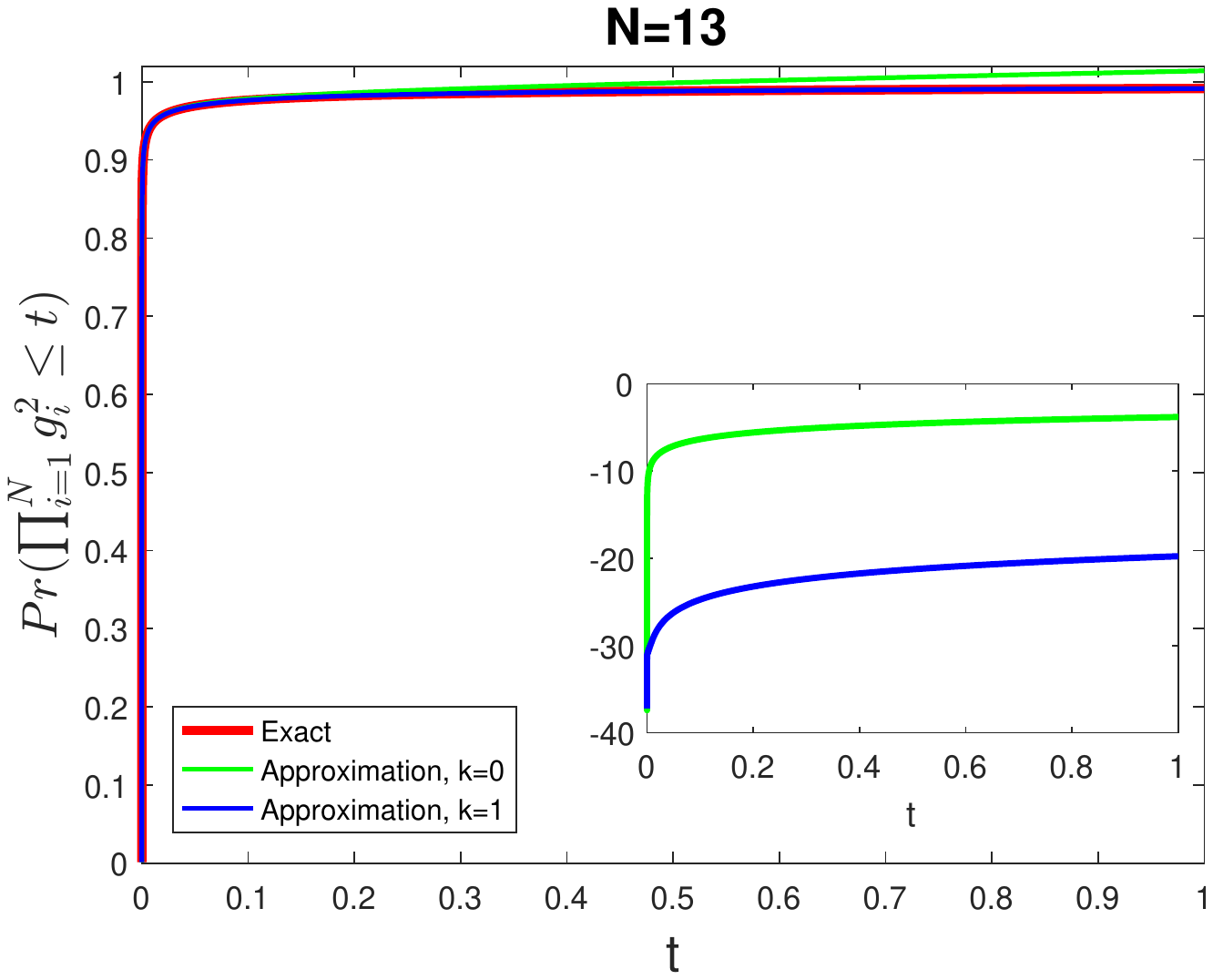}
  }\\
  \subfloat[$Z=\prod_{i=1}^N |g_i|$] {
    \includegraphics[width=0.32\textwidth,trim={97 240 120 217}]{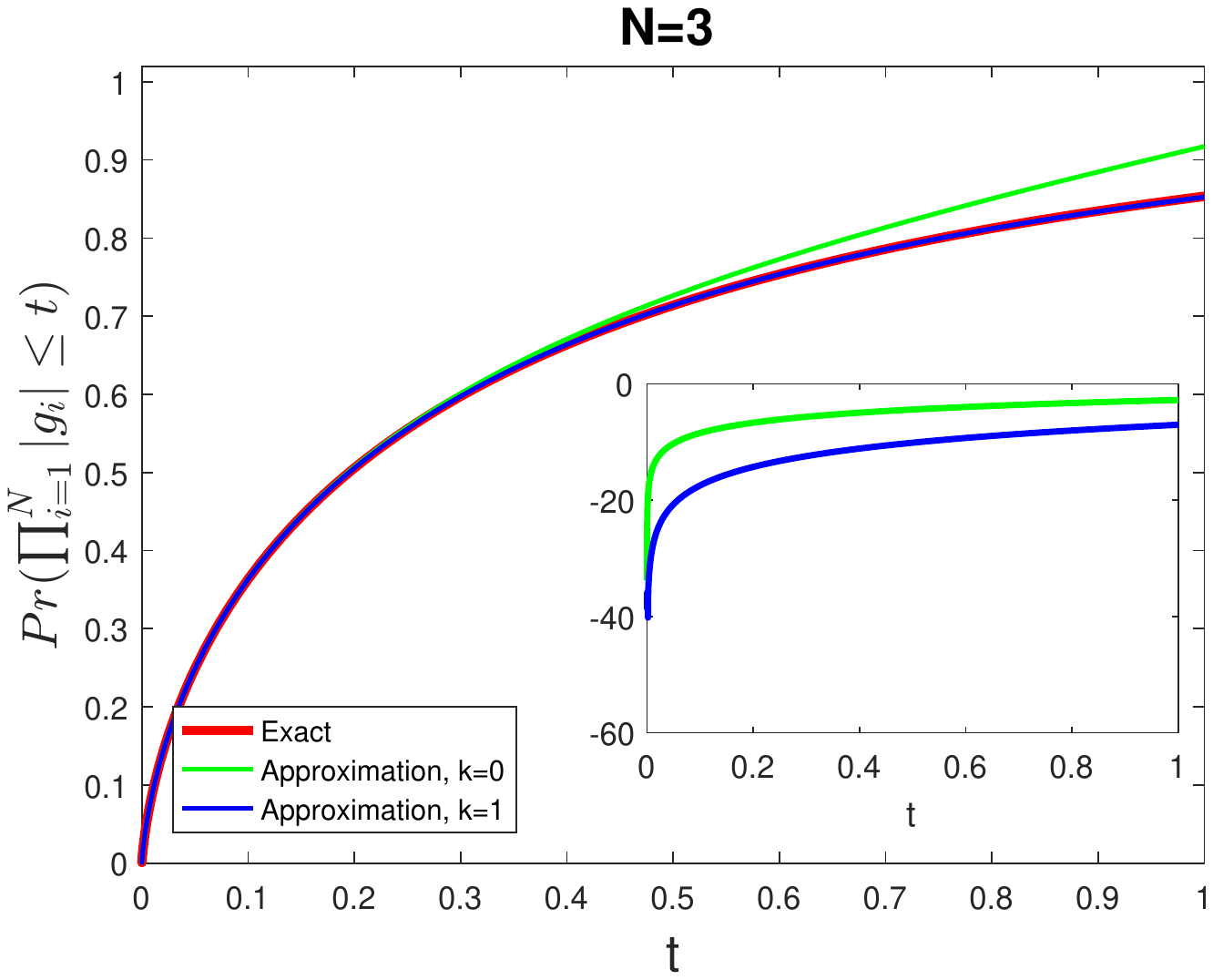}
    \hfill
    \includegraphics[width=0.32\textwidth,trim={97 240 120 217}]{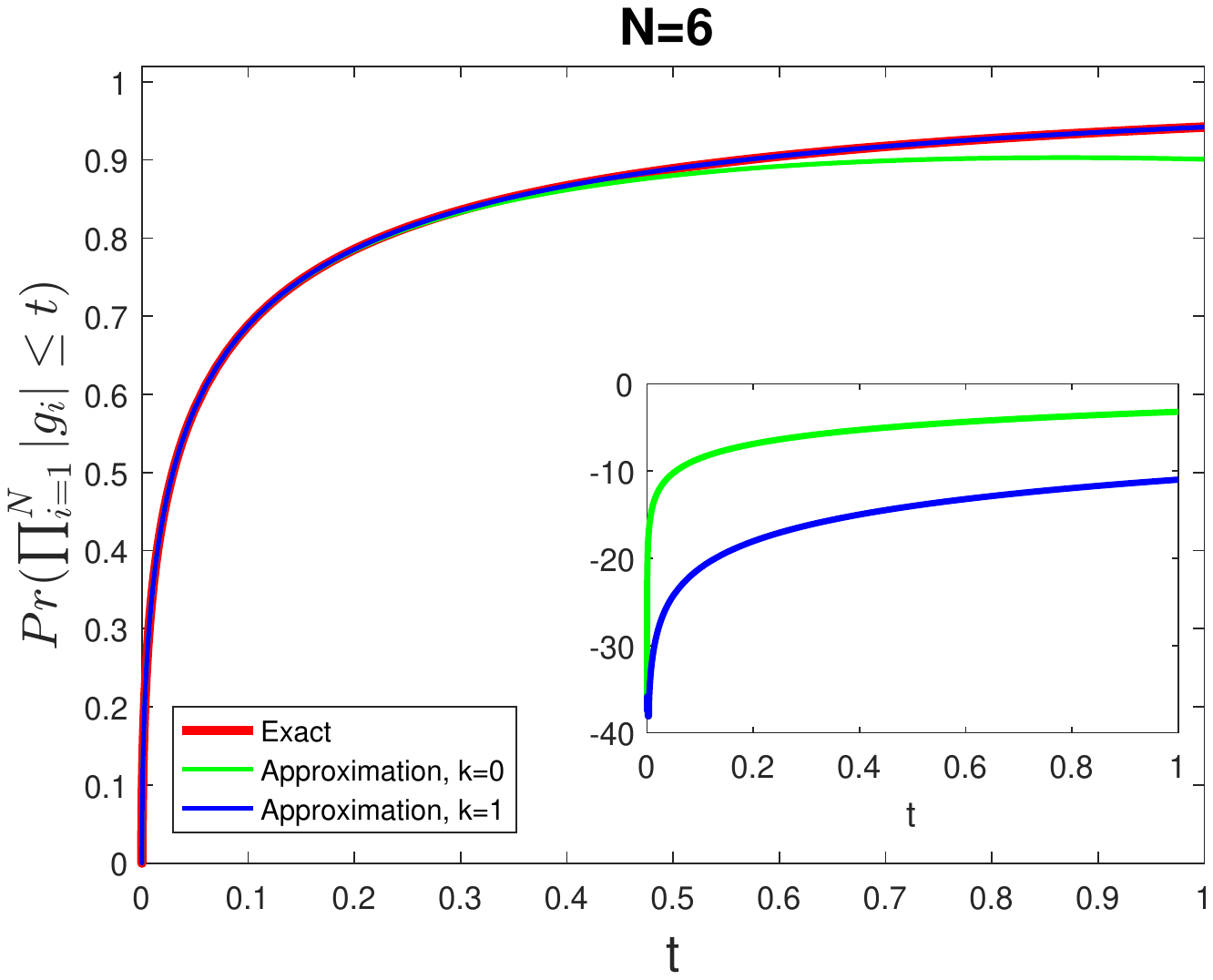}
    \hfill
    \includegraphics[width=0.32\textwidth,trim={97 240 120 217}]{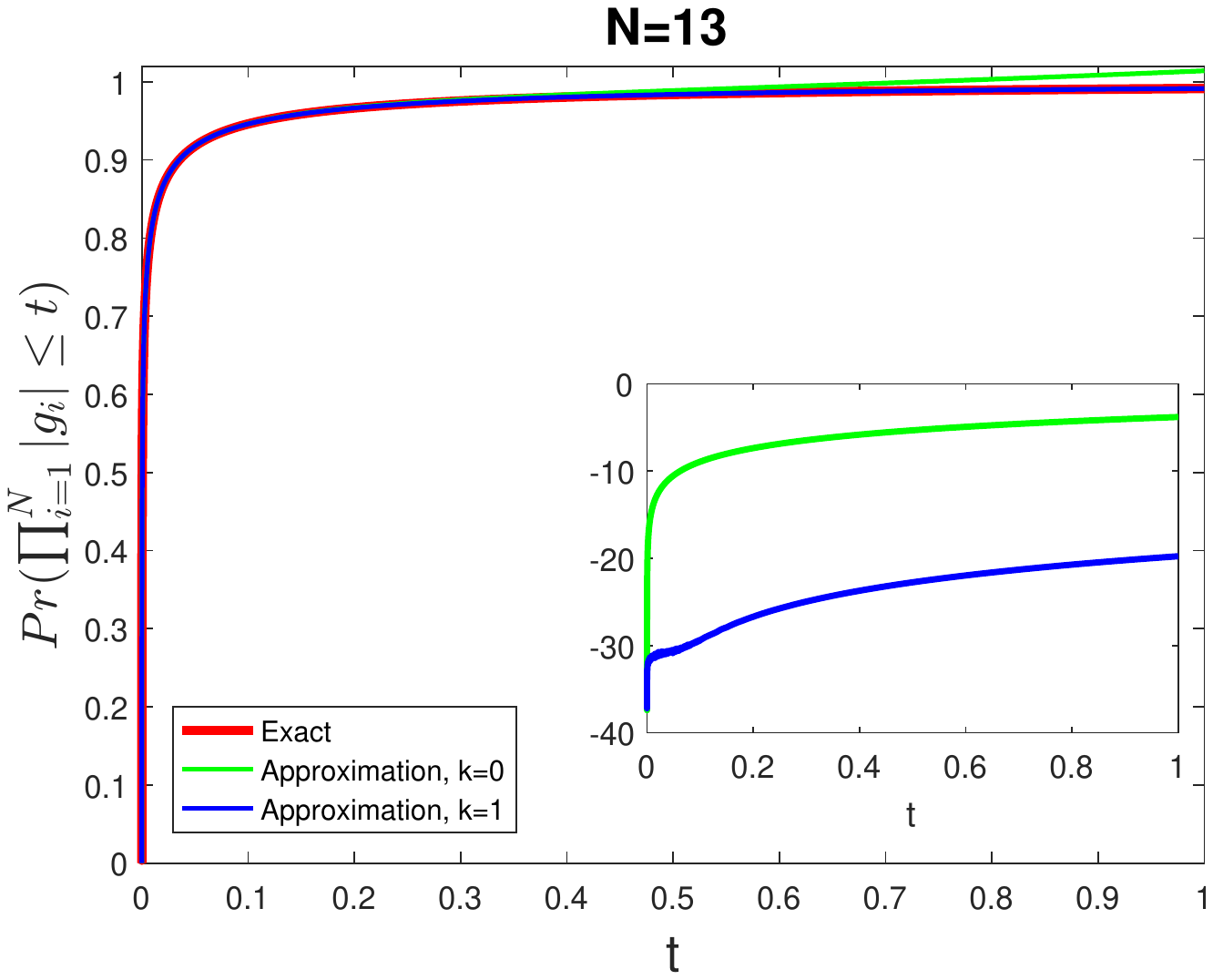}
  }\\[1em]
  \caption{%
    The plots of the  CDFs (red lines) of the random variables $X$, $Y$, and $Z$ with their power-log series \eqref{eq:power-log-series}
    truncated at $k=0$ (green lines) and $k=1$ (blue lines),
    where $\{g_i\}_{i=1}^N$ is a set of iid standard Gaussian random variables.
    Each inset shows the approximation error of the truncations on a logarithmic scale
    for $k=0$ (green line) and $k=1$ (blue line).
    \label{Fig:ComparisonLog}
  }
\end{figure}
\clearpage
\restoregeometry
} % end of \afterpage{...} material
%
% \newpage
To prove the above result, it is enough to obtain the power-log series  for the Meijer G-function from %Eq.~
\eqref{eq:g_alpha},
\begin{equation}\label{MeijerGmain}
G_{N+1,1}^{0,N+1} \left(z \Big| \begin{matrix} 1, 1/2, \ldots, 1/2 \\ 0 \end{matrix} \right) \, ,
\end{equation}
keeping in mind that in the case of random variables $X$ and $Z$ we have $z=\frac{2^N}{t^2}$, and in the case of random variable $Y$ we have $z=\frac{2^N}{t}$ with $t>0$. Then Theorem~\ref{thm:CDF_as_mejier_g} is a direct consequence of Proposition~\ref{prop:CDF}  and Theorem~\ref{thm:MeijerGseries}.
\begin{theorem}[Power-log expansion of the relevant Meijer G-function]
\label{thm:MeijerGseries}
 For $z>0$,
\begin{equation}
G_{N+1,1}^{0,N+1} \left(z \Big| \begin{matrix} 1, 1/2, \ldots, 1/2 \\ 0 \end{matrix} \right) = \pi^{\frac{N}{2}} - \sum_{k=0}^{\infty} z^{-1/2-k} \sum_{j=0}^{N-1} H_{kj} \cdot \sqbra{\log z}^j
\end{equation}
with $H_{kj}$ defined in %Eq.~
\eqref{HkjFinal}.
\end{theorem}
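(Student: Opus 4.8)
The plan is to begin from the Mellin--Barnes integral representation of the Meijer $G$-function recalled in Appendix~\ref{sec:intro_meijer_g} and to evaluate it by summing residues, the contour manipulation and convergence being licensed by the Fox $H$-function theory of~\cite[Theorem 1.5]{Htransform}. The first observation is that for the parameters at hand the integrand collapses. With $m=0$, $n=p=N+1$, $q=1$, $a_1=1$, $a_2=\dots=a_{N+1}=\tfrac12$, $b_1=0$, the Mellin kernel $\prod_{j=1}^{n}\Gamma(1-a_j+s)\big/\prod_{j=m+1}^{q}\Gamma(1-b_j+s)$ equals $\Gamma(s)\,\Gamma(\tfrac12+s)^N/\Gamma(1+s)$, and using $\Gamma(1+s)=s\,\Gamma(s)$ this reduces to
\[
\frac{\Gamma(\tfrac12+s)^N}{s}\,z^s .
\]
Hence the integrand has a simple pole at $s=0$ and poles of order $N$ at $s=-\tfrac12-k$, $k\in\NN_0$; crucially, the would-be poles of $\Gamma(s)$ at the negative integers are cancelled by $\Gamma(1+s)$ in the denominator, which is precisely what makes the resulting series clean.

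Closing the contour to the left (justified for $z>0$ by the cited theorem together with Stirling's asymptotics, which control the arcs), I would write $G=\Res_{s=0}+\sum_{k\ge0}\Res_{s=-1/2-k}$. The simple pole at $s=0$ gives immediately $\Res_{s=0}\tfrac{1}{s}\Gamma(\tfrac12+s)^N z^s=\Gamma(\tfrac12)^N=\pi^{N/2}$, the constant term. For the order-$N$ poles I would substitute $w\coloneqq s+\tfrac12+k$ and use the recursion $\Gamma(\tfrac12+s)=\Gamma(w-k)=\Gamma(w+1)\big/\prod_{i=0}^{k}(w-i)$ to isolate the singular part: pulling out $\tfrac{(-1)^k}{k!}\,w^{-1}$ leaves the analytic factor $\Phi(w)\coloneqq\Gamma(w+1)\prod_{i=1}^{k}(1-w/i)^{-1}$ with $\Phi(0)=1$. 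Thus the residue equals $z^{-1/2-k}$ times the coefficient of $w^{N-1}$ in
\[
\frac{(-1)^{Nk}}{(k!)^N}\,\Phi(w)^N\cdot e^{w\log z}\cdot\frac{1}{w-\tfrac12-k}.
\]

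The remaining and principal task is a pure power-series bookkeeping, matching this coefficient to~\eqref{HkjFinal}. Here $e^{w\log z}=\sum_j \tfrac{[\log z]^j}{j!}w^j$ supplies the logarithmic powers and the prefactor $1/j!$; the factor $\tfrac{1}{w-1/2-k}=-\sum_{m\ge0}(\tfrac12+k)^{-(m+1)}w^m$ supplies the powers of $(\tfrac12+k)$ together with the overall minus sign in front of the series; and $\Phi(w)^N$, expanded as an $N$-fold Cauchy product, supplies the product $\prod_{t=1}^N\{\cdots\}$ over the composition $j_1+\dots+j_N=N-1-n$, each factor being the coefficient of $w^{j_t}$ in $\Phi$. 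In that coefficient, the Taylor series $\Gamma(w+1)=\sum_\ell \tfrac{\Gamma^{(\ell)}(1)}{\ell!}w^\ell$ contributes the $\Gamma^{(\ell_{k+1})}(1)$ terms while the geometric series $(1-w/i)^{-1}$ contribute the rational factors; after reindexing $i\mapsto k-i+1$ the $(1-w)^{-1}$ factor becomes trivial and the $\tfrac{1}{k!}=\prod_{i=1}^{k-1}(k-i+1)^{-1}$ gets absorbed into the exponent $-(\ell_i+1)$. Setting $n\coloneqq j+m$, so that the three powers add up to $N-1$, then reproduces~\eqref{HkjFinal} verbatim. I expect exactly this combinatorial matching---tracking signs, the absorption of $1/k!$, and the vanishing contribution of the $(1-w)^{-1}$ factor---to be the only delicate point, since all the analytic inputs (contour closing and convergence for $z>0$) are delivered by~\cite[Theorem 1.5]{Htransform}.
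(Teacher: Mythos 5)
Your proposal is correct and follows essentially the same route as the paper: expand the Mellin--Barnes integral as a sum of residues, obtain $\pi^{N/2}$ from the lone surviving simple pole at the origin (the other integer poles cancelling against the denominator Gamma factor), and extract the power-log terms from the order-$N$ poles at $-\tfrac12-k$. The only difference is organizational: where you read off the coefficient of $w^{N-1}$ from the explicit Laurent factorization $\Gamma(w-k)=\tfrac{(-1)^k}{k!\,w}\,\Gamma(w+1)\prod_{i=1}^{k}(1-w/i)^{-1}$, the paper performs the equivalent computation via the Leibniz rule (Lemmas~\ref{lem:ResNonSimple}--\ref{lemma:limfder}), with the proof of Lemma~\ref{lemma:limfder} establishing precisely this factorization in derivative form.
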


\begin{proof}
By~\cite[Theorem 1.2 (iii)]{Htransform}, the Meijer G-function~\eqref{MeijerGmain} is an analytic function of $z$ in the sector $\abs{\arg z}<\frac{N\pi}{2}$.
This condition is satisfied for $N \geq 3$ since $\arg z=0$ for $z \in \R_0^+$ and $\arg z=\pi$, for $z \in \R^-$.
Therefore, we have
\begin{equation}
  G_{N+1,1}^{0,N+1} \left(z \Big| \begin{matrix} 1, 1/2, \ldots, 1/2 \\ 0 \end{matrix} \right)= -\sum_{i=1}^{N+1}\sum_{k=0}^{\infty} \Res_{s=a_{ik}} \sqbra{\mathcal{H}_{N+1,1}^{0,N+1}(s) z^{-s}},
\end{equation}
where  $\mathcal{H}_{N+1,0}^{0,N+1}$ is given by %Eq.~
\eqref{Hmnpq} in the Supplemental Materials
with $m=0$, $n=p=N+1$, $q=1$, $a_1=1$, $a_2=\ldots=a_{N+1}=1/2$, and $b_1=0$, i.e.\
\begin{equation}
\mathcal{H}_{N+1,1}^{0,N+1} (s)  =  \frac{\prod_{i=1}^{N+1} \Gamma(1-a_i-s)}{\Gamma(1-b_1-s)}
\end{equation}
and $a_{ik}=1-a_i+k$ with $k \in \NN_0$ denote the poles of $\Gamma(1-a_i-s)$.
In our scenario we have simple poles $a_{1k}=k$ and  poles $a_{2k}=1/2+k$ of order $N$ with $k \in \N_0$.
Therefore,
\begin{align}
  G_{N+1,1}^{0,N+1} \left(z \Big| \begin{matrix} 1, 1/2, \ldots, 1/2 \\ 0 \end{matrix} \right)= \sum_{k=0}^{\infty} & -\Res_{s=a_{1k}} \sqbra{\mathcal{H}_{N+1,1}^{0,N+1}(s) z^{-s}}  \nonumber\\
  & + \sum_{k=0}^{\infty} -\Res_{s=a_{2k}} \sqbra{\mathcal{H}_{N+1,1}^{0,N+1}(s) z^{-s}} \label{MeijerGinRes}
\end{align}
with
\begin{align}
\mathcal{H}_{N+1,1}^{0,N+1} (s) & =  \frac{\prod_{i=1}^{N+1} \Gamma(1-a_i-s)}{\Gamma(1-b_1-s)} =  \frac{\Gamma(1-1-s) \Gamma^N(1-1/2-s)}{\Gamma(1-0-s)}
\nonumber
\\
& = \frac{\Gamma(-s) \Gamma^N(1/2-s)}{\Gamma(1-s)} = \frac{\Gamma(-s) \Gamma^N(1/2-s)}{-s \cdot \Gamma(-s)} = - \frac{ \Gamma^N(1/2-s)}{s} ,
\label{mathcalH}
\end{align}
where we have used that $\Gamma(1-s)=-s \cdot \Gamma(-s)$.
For the simple poles $a_{1k}$ we have~\cite[(1.4.10)]{Htransform}\footnote{%
  Note that there is a minus sign missing in~\cite[(1.4.10)]{Htransform} due to a typo.
}
\begin{equation} \label{eq:minus_Res}
 -\Res_{s=a_{1k}} \sqbra{\mathcal{H}_{N+1,1}^{0,N+1}(s) z^{-s}} = h_{1k} z^{-a_{1k}},
 \end{equation}
 where the constants $h_{1k}$ are
 \begin{equation}
 h_{1k}=\lim_{s \rightarrow a_{1k}} \sqbra{-(s-a_{1k}) \mathcal{H}_{N+1,1}^{0,N+1}(s)}.
 \end{equation}
For $k \in \N$ using %Eq.~
\eqref{mathcalH} we have
\begin{equation}
\begin{aligned}
 h_{1k}&=\lim_{s \rightarrow k} \sqbra{-(s-k) \mathcal{H}_{N+1,1}^{0,N+1}(s)} =  \lim_{s \rightarrow k} \sqbra{(s-k) \frac{ \Gamma^N(1/2-s)}{s} } \\
 & = \lim_{s \rightarrow k} \sqbra{\roundbra{1-\frac{k}{s}} \Gamma^N(1/2-s) } = 0. \label{h1k}
\end{aligned}
\end{equation}
For $k=0$ we have
\begin{equation}\label{h10}
 h_{10}=\lim_{s \rightarrow 0} \sqbra{-s \mathcal{H}_{N+1,1}^{0,N+1}(s)} =  \lim_{s \rightarrow 0} \sqbra{s \frac{ \Gamma^N(1/2-s)}{s} } = \lim_{s \rightarrow 0} \sqbra{\Gamma^N(1/2-s) } = \pi^{\frac{N}{2}}.
\end{equation}
Combining %Eqs.~
\eqref{h1k}, \eqref{h10}, and \eqref{eq:minus_Res} with %Eq.~
\eqref{MeijerGinRes} we obtain
\begin{equation}
G_{N+1,1}^{0,N+1} \left(z \Big| \begin{matrix} 1, 1/2, \ldots 1/2 \\ 0 \end{matrix} \right)= \pi^{\frac{N}{2}} - \sum_{k=0}^{\infty}\Res_{s=a_{2k}} \sqbra{\mathcal{H}_{N+1,1}^{0,N+1}(s) z^{-s}},
\end{equation}
where $a_{2k}=1/2+k$, for all $k \in \N_0$.
Having %Eq.~
\eqref{MeijerGinRes} in mind, the result now follows from the following lemma.
\end{proof}

\begin{lemma} [Residues of $\Gamma^N$]
\label{lem:ResNonSimple}
For $k \in \NN_0$ and $a_{2k}=1/2+k$
\begin{equation}
\Res_{s=a_{2k}} \sqbra{\mathcal{H}_{N+1,1}^{0,N+1}(s) z^{-s}} = z^{-1/2-k} \sum_{j=0}^{N-1} H_{kj} \cdot \sqbra{\log z}^j
\end{equation}
where $H_{kj}$ are defined in %Eq.~
\eqref{HkjFinal}.
\end{lemma}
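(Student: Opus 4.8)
The plan is to compute the order-$N$ residue directly from the explicit form $\mathcal{H}_{N+1,1}^{0,N+1}(s) = -\Gamma^N(1/2-s)/s$ obtained in \eqref{mathcalH}. Since $\Gamma$ has a simple pole at every nonpositive integer and $1/2 - a_{2k} = -k$, the factor $\Gamma^N(1/2-s)$ contributes a pole of order exactly $N$ at $s = a_{2k} = 1/2 + k$ (the factor $1/s$ being regular there), so the residue is governed by
\begin{equation}
\Res_{s = a_{2k}}\sqbra{\mathcal{H}_{N+1,1}^{0,N+1}(s)\, z^{-s}} = \frac{1}{(N-1)!}\lim_{\epsilon \to 0}\frac{\d^{N-1}}{\d\epsilon^{N-1}}\sqbra{\epsilon^N\,\mathcal{H}_{N+1,1}^{0,N+1}(a_{2k}+\epsilon)\,z^{-a_{2k}-\epsilon}},
\end{equation}
where I substitute $\epsilon = s - a_{2k}$. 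Everything then reduces to reading off the coefficient of $\epsilon^{N-1}$ in an otherwise analytic function of $\epsilon$.

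The key first step is to isolate the pole. Writing $1/2 - s = -k - \epsilon$ and using the recurrence $\Gamma(w) = \Gamma(w+k+1)/\prod_{m=0}^{k}(w+m)$ with $w = -k-\epsilon$, one obtains
\begin{equation}
\Gamma(-k-\epsilon) = \frac{(-1)^{k+1}}{\epsilon}\, g(\epsilon), \qquad g(\epsilon) \coloneqq \frac{\Gamma(1-\epsilon)}{\prod_{m=1}^{k}(m+\epsilon)},
\end{equation}
with $g$ holomorphic near $\epsilon = 0$. Consequently $\epsilon^N\mathcal{H}_{N+1,1}^{0,N+1}(a_{2k}+\epsilon)\,z^{-a_{2k}-\epsilon}$ equals, up to the constant $-(-1)^{N(k+1)}z^{-1/2-k}$, the product of three functions analytic at $0$, namely $g(\epsilon)^N$, the factor $(\tfrac12+k+\epsilon)^{-1}$, and $\e^{-\epsilon\log z}$.

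I would then extract $[\epsilon^{N-1}]$ of this product by convolving the three Taylor series. The exponential supplies $\e^{-\epsilon\log z} = \sum_{q}\frac{(-\log z)^q}{q!}\epsilon^q$, whose index $q$ I identify with the power $j$ of $\log z$ in the claim; the geometric series $(\tfrac12+k+\epsilon)^{-1} = \sum_r(-1)^r(\tfrac12+k)^{-(r+1)}\epsilon^r$ supplies the powers of $(\tfrac12+k)^{-1}$; and $g(\epsilon)^N = \sum_p ([\epsilon^p]g^N)\,\epsilon^p$ supplies the combinatorial bulk. Imposing $p+q+r = N-1$ and reindexing $q = j$, $r = n-j$, $p = N-1-n$ turns the convolution into $\sum_{n=j}^{N-1}(\tfrac12+k)^{-(n-j+1)}\,[\epsilon^{N-1-n}]g^N$ multiplying $[\log z]^j$, which already matches the skeleton of \eqref{HkjFinal}. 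The coefficient $[\epsilon^{N-1-n}]g^N$ is then expanded as the composition sum $\sum_{j_1+\dots+j_N = N-1-n}\prod_{t=1}^N [\epsilon^{j_t}]g$, and each $[\epsilon^{j_t}]g$ is computed from $\Gamma(1-\epsilon) = \sum_\ell \frac{(-1)^\ell\Gamma^{(\ell)}(1)}{\ell!}\epsilon^\ell$ together with $(m+\epsilon)^{-1} = \sum_\ell(-1)^\ell m^{-(\ell+1)}\epsilon^\ell$. This reproduces precisely the inner bracket of \eqref{HkjFinal}, the factor $m=1$ contributing a trivial $1^{-(\ell_k+1)}=1$, which explains why the composition runs over $k+1$ parts while only $k-1$ of them appear in the explicit product.

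The main obstacle I expect is sign bookkeeping rather than conceptual difficulty: the factors $(-1)^{k+1}$ (from isolating the pole), $(-1)^{N(k+1)}$ (from the $N$-th power), the $(-1)^r$ from the geometric series, the $(-1)^j$ from the exponential, and a $(-1)^{j_t}$ hidden in each Taylor coefficient of $g$ must all be collected correctly. The delicate point is that the per-factor signs inside $g$ combine to an overall $(-1)^{N-1-n}$ once the composition sum is assembled, and this must then cancel against $(-1)^{N(k+1)}(-1)^n$ coming from the prefactor and the geometric series to leave exactly the $(-1)^{Nk}$ appearing in \eqref{HkjFinal}. Checking that the reindexing $q=j$, $r=n-j$, $p=N-1-n$ is consistent term by term, and that the power of $\log z$ is correctly identified with $j$, is the step that requires the most care.
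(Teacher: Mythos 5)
Your proposal is correct and follows essentially the same route as the paper: the order-$N$ residue is computed via the $(N-1)$-th derivative formula, the pole is isolated by writing $(s-\tfrac12-k)\Gamma(\tfrac12-s)$ as an analytic function using the Gamma recurrence, and the double composition sum in \eqref{HkjFinal} arises from the multinomial Leibniz/Cauchy-product expansion of $g^N$ together with the geometric series for $-s^{-1}$ and the exponential series for $z^{-s}$. The only difference is presentational — you extract Taylor coefficients where the paper computes derivatives and limits (via Lemmas~\ref{lem:H1s}--\ref{lemma:fder}) — and your sign accounting, including the trivial $m=1$ factor explaining the $k+1$ versus $k-1$ index ranges, checks out against \eqref{HkjFinal}.
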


In the following, we denote the $n$-th derivative of a function $f$ by $f^{(n)}$ or $[f]^{(n)}$.
Furthermore, the $n$-th derivative of product of functions $f$ and $g$ is denoted by $[f\cdot g]^{(n)}$.

\begin{proof}
Since $a_{2k}=1/2+k$ is an $N$-th order pole of $\Gamma(1/2-s)$ for all $k \in \N_0$, also the integrand $\mathcal{H}_{N+1,1}^{0,N+1}(s)= -s^{-1} \Gamma^N(1/2-s)$ has a pole $a_{2k}$ of order $N$ for all $k \in \N_0$.
Thus
\begin{align}
& \Res_{s=1/2+k} \sqbra{\mathcal{H}_{N+1,1}^{0,N+1}(s)z^{-s}} \nonumber \\
& = \frac{1}{(N-1)!} \lim_{s \rightarrow 1/2+k} \sqbra{(s-1/2-k)^N \mathcal{H}_{N+1,1}^{0,N+1}(s) z^{-s}}^{(N-1)}
\nonumber
\\
& = \frac{1}{(N-1)!} \lim_{s \rightarrow 1/2+k} \sqbra{(s-1/2-k)^N \cdot \roundbra{-s^{-1} \Gamma^N(1/2-s)} z^{-s}}^{(N-1)}
\nonumber
\\
& = \frac{1}{(N-1)!} \lim_{s \rightarrow 1/2+k} \sqbra{\roundbra{\roundbra{s-1/2-k}\Gamma\roundbra{1/2-s}}^N \cdot (-s^{-1}) z^{-s}}^{(N-1)}
\nonumber
\\
& = \frac{1}{(N-1)!} \lim_{s \rightarrow 1/2+k} \sqbra{\mathcal{H}_1(s) \cdot \mathcal{H}_2(s) z^{-s}}^{(N-1)}, \label{eq:residualder}
\end{align}
where
\begin{align}
\mathcal{H}_1(s) & \coloneqq \roundbra{\roundbra{s-1/2-k}\Gamma\roundbra{1/2-s}}^N \label{eq:defH1}\\
\mathcal{H}_2(s) & \coloneqq -s^{-1}.
\end{align}
Using the Leibniz rule and that $\frac{\partial^i}{\partial^i s}\sqbra{z^{-s}} = (-1)^i z^{-s} \sqbra{\log z}^i$ we can expand the expression in the limit in \eqref{eq:residualder}, i.e.,
\begin{align}
& \sqbra{\mathcal{H}_1(s) \cdot \mathcal{H}_2(s) z^{-s}}^{(N-1)} \nonumber\\
& = \sum_{n=0}^{N-1} \binom{N-1}{n} \sqbra{\mathcal{H}_1(s)}^{(N-1-n)} \sqbra{\mathcal{H}_2(s)z^{-s}}^{(n)}
\nonumber
\\
& = \sum_{n=0}^{N-1} \binom{N-1}{n} \sqbra{\mathcal{H}_1(s)}^{(N-1-n)} \sum_{j=0}^n \binom{n}{j} (-1)^j \sqbra{\mathcal{H}_2(s)}^{(n-j)}z^{-s} \sqbra{\log z}^j
\nonumber
\\
& =z^{-s} \sum_{j=0}^{N-1} \curlybra{ \sum_{n=j}^{N-1}  (-1)^j \binom{N-1}{n}  \binom{n}{j} \sqbra{\mathcal{H}_1(s)}^{(N-1-n)}  \sqbra{\mathcal{H}_2(s)}^{(n-j)}} \sqbra{\log z}^j . \label{eq:limitRes}
\end{align}

Plugging \eqref{eq:limitRes} into \eqref{eq:residualder} we obtain
\begin{equation}\label{eq:Resdevelop}
\begin{aligned}
&\Res_{s=1/2+k} \sqbra{\mathcal{H}_{N+1,1}^{0,N+1}(s)z^{-s}} 
\\
& = \frac{1}{(N-1)!} \lim_{s \rightarrow 1/2+k} z^{-s} 
\\
&\quad \quad  \cdot \sum_{j=0}^{N-1} \left\{ \sum_{n=j}^{N-1}  (-1)^j \binom{N-1}{n}  \binom{n}{j} %\right. \nonumber \\
%& \quad \quad \quad \quad \quad \quad \quad \cdot \left.
 \sqbra{\mathcal{H}_1(s)}^{(N-1-n)}  \sqbra{\mathcal{H}_2(s)}^{(n-j)}\right\} \sqbra{\log z}^j
\\
& = z^{-1/2-k} \sum_{j=0}^{N-1}  \left\{\frac{1}{(N-1)!} \sum_{n=j}^{N-1}  (-1)^j \binom{N-1}{n}  \binom{n}{j} \right. 
\\
& \quad \quad \quad \quad \quad \quad \quad \cdot \left. \lim_{s \rightarrow 1/2+k}  \sqbra{\mathcal{H}_1(s)}^{(N-1-n)}  \lim_{s \rightarrow 1/2+k} \sqbra{\mathcal{H}_2(s)}^{(n-j)}\right\} \sqbra{\log z}^j
\\ 
&= z^{-1/2-k} \sum_{j=0}^{N-1}  H_{kj} \cdot \sqbra{\log z}^j,
\end{aligned}
\end{equation}
where
\begin{equation}\label{eq:Hkj1}
H_{kj} \coloneqq \frac{(-1)^j}{(N-1)!} \sum_{n=j}^{N-1}  \binom{N-1}{n}  \binom{n}{j} \lim_{s \rightarrow 1/2+k}  \sqbra{\mathcal{H}_1(s)}^{(N-1-n)}  \lim_{s \rightarrow 1/2+k} \sqbra{\mathcal{H}_2(s)}^{(n-j)}.
\end{equation}
It is easy to see (proof by induction) that for $\mathcal{H}_2(s)=-s^{-1}$ and for $\ell \in \N_0$ it holds that
\begin{align}
 \mathcal{H}_2^{(\ell)}(s) & = (-1)^{\ell+1} \ell! s^{-\ell-1}  
 \intertext{and} 
 \lim_{s \rightarrow 1/2+k} \sqbra{\mathcal{H}_2(s)}^{(\ell)} & = (-1)^{\ell+1} \ell! (1/2+k)^{-\ell-1}\quad \text{for all } k \in \N_0.
\end{align}
Plugging this into \eqref{eq:Hkj1} and simplifying leads to
\begin{equation}
H_{kj}=  \sum_{n=j}^{N-1}  (-1)^{n+1} \frac{1}{j!(N-1-n)!} \roundbra{1/2+k}^{-(n-j+1)} \lim_{s \rightarrow 1/2+k}  \sqbra{\mathcal{H}_1(s)}^{(N-1-n)} .
\end{equation}
Comparing the above expression with \eqref{HkjFinal} it is enough to show that
\begin{align}
\lim_{s \rightarrow 1/2+k} &\sqbra{\mathcal{H}_1(s)}^{(N-1-n)}  = (-1)^{Nk-n-1} (N-1-n)! 
\nonumber \\
& \cdot \sum_{j_1+\ldots+j_N=N-1-n} \ \prod_{t=1}^N
		\curlybra{\sum_{\ell_1+\ldots+\ell_{k+1}=j_t} \frac{\Gamma^{(\ell_{k+1})}(1)}{\ell_{k+1}!} \curlybra{\prod_{i=1}^{k-1} \roundbra{k-i+1}^{-(\ell_i+1)}}} , \label{eq:limH1N-1-n}
\end{align}
which is a direct consequence of the following result.
\end{proof}
\begin{lemma}\label{lem:H1s}
For $\mathcal H_1$ defined in %Eq.~
\eqref{eq:defH1} and $k \in \NN_0$
\begin{align}
%&\phantom{=.}
\lim_{s \rightarrow 1/2+k}& \sqbra{\mathcal{H}_1(s)}^{(j)} = (-1)^{N(k-1)+j} j! 
\nonumber \\ \label{eq:lem:H1s}
 %= (-1)^{N(k-1)+j} j! %\,
& \quad \quad \cdot \sum_{j_1+\ldots+j_N=j} \ \prod_{t=1}^N
		\curlybra{\sum_{\ell_1+\ldots+\ell_{k+1}=j_t} \frac{\Gamma^{(\ell_{k+1})}(1)}{\ell_{k+1}!} \curlybra{\prod_{i=1}^{k-1} \roundbra{k-i+1}^{-(\ell_i+1)}}}.
\end{align}
\end{lemma}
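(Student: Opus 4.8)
The plan is to reduce everything to understanding the single function $g(s) \coloneqq (s-\tfrac12-k)\Gamma(\tfrac12-s)$, since by \eqref{eq:defH1} we have $\mathcal H_1(s) = g(s)^N$. First I would substitute $w \coloneqq s - \tfrac12 - k$ (so that $\tfrac{\d}{\d s} = \tfrac{\d}{\d w}$ and the target point becomes $w=0$) and apply the recursion $\Gamma(z) = \Gamma(z+1)/z$ exactly $k+1$ times to the factor $\Gamma(\tfrac12 - s) = \Gamma(-k-w)$, carrying it up to $\Gamma(1-w)$. After simplifying the resulting denominator this yields
\begin{equation*}
g = w\,\Gamma(-k-w) = (-1)^{k+1}\,\frac{\Gamma(1-w)}{\prod_{i=1}^{k}(i+w)},
\end{equation*}
which in particular shows that the simple pole of $\Gamma(\tfrac12-s)$ at $s=\tfrac12+k$ is cancelled by the prefactor $(s-\tfrac12-k)$, so that $g$ — and hence $\mathcal H_1 = g^N$ — is analytic at $w=0$. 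This is the only genuinely analytic input; everything afterwards is formal power-series bookkeeping.

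Next I would read off the Taylor coefficients of $g$ at $w=0$ by treating it as a product of $k+1$ elementary analytic factors, namely $\Gamma(1-w)$ together with the $k$ factors $(i+w)^{-1}$, $i=1,\dots,k$. Using
\begin{equation*}
\Gamma(1-w) = \sum_{\ell\ge 0}\frac{(-1)^{\ell}\Gamma^{(\ell)}(1)}{\ell!}\,w^{\ell},
\qquad
\frac{1}{i+w} = \sum_{\ell\ge 0}(-1)^{\ell} i^{-(\ell+1)}\,w^{\ell},
\end{equation*}
the coefficient $[w^m]g$ is the $(k+1)$-fold Cauchy product of these series. Each factor contributes exactly one sign $(-1)^{\ell}$, so the total sign of a term with $\ell_1+\dots+\ell_{k+1}=m$ collapses to $(-1)^{m}$, which together with the global $(-1)^{k+1}$ I would pull out front. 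Moreover the $i=1$ factor has magnitude $1^{-(\ell+1)}=1$, so it only consumes a summation index without appearing in the product. Relabelling so that $\ell_{k+1}$ is the $\Gamma$-index and $\ell_1,\dots,\ell_{k-1}$ carry the bases $k,k-1,\dots,2$ reproduces exactly the inner bracket of \eqref{eq:lem:H1s}; writing that inner sum as $c_m$, this gives $[w^m]g = (-1)^{k+1+m} c_m$.

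Finally I would form $\mathcal H_1 = g^N$ via the $N$-fold Cauchy product, $[w^j]g^N = \sum_{j_1+\dots+j_N=j}\prod_{t=1}^N [w^{j_t}]g$, which directly produces the outer sum $\sum_{j_1+\dots+j_N=j}\prod_{t=1}^N c_{j_t}$ of \eqref{eq:lem:H1s}. The signs multiply to $(-1)^{N(k+1)+\sum_t j_t}=(-1)^{N(k+1)+j}$, and since $(-1)^{2N}=1$ this equals the claimed $(-1)^{N(k-1)+j}$. To pass from Taylor coefficients to the stated derivative limit I would invoke analyticity of $\mathcal H_1$ at $w=0$, giving $\lim_{s\to 1/2+k}[\mathcal H_1(s)]^{(j)} = \tfrac{\d^j}{\d w^j}g^N\big|_{w=0} = j!\,[w^j]g^N$, which supplies the factor $j!$ in \eqref{eq:lem:H1s}.

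I expect the main obstacle to be the bookkeeping rather than any deep idea: correctly matching the $k+1$ convolution indices to the factors (with the base-$1$ factor silently absorbed and one index left "free"), tracking every $\pm$ sign, and verifying the degenerate cases $k=0$ (no $(i+w)^{-1}$ factors) and $k=1$ (the product $\prod_{i=1}^{k-1}$ empty) so that the conventions of \eqref{eq:lem:H1s} are respected. One should also justify the termwise handling of the Cauchy products, which is immediate from absolute convergence of the factor expansions in a neighbourhood of $w=0$.
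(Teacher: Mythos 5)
Your argument is correct and lands on exactly the right combinatorics, including the sign $(-1)^{N(k+1)+j}=(-1)^{N(k-1)+j}$, the multinomial factor collapsing to $j!$, and the role of the silent base-$1$ index. At its core it is the same decomposition as the paper's: you write the regularized factor as $w\,\Gamma(-k-w)=(-1)^{k+1}\Gamma(1-w)/\prod_{i=1}^{k}(i+w)$, which is precisely the paper's identity $(z+k)\Gamma(z)=\Gamma(z+k+1)/\bigl(z(z+1)\cdots(z+k-1)\bigr)$ in the variable $z=1/2-s$, and your $N$-fold Cauchy product is the Leibniz rule the paper applies to $\mathcal H_1=f^N$. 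Where you genuinely diverge is in the single-factor step: the paper first proves Lemma~\ref{lemma:fder}, expressing $f^{(j)}(s)$ through $\Gamma^{(j-1)}(1/2-s)$ and $\Gamma^{(j)}(1/2-s)$, and then in Lemma~\ref{lemma:limfder} expands these derivatives in a Laurent series near the pole $-k$ and verifies by hand that the singular terms $\varepsilon^{-j}c_{k,0}$ cancel in the limit. You sidestep that entire computation by observing up front that $f$ extends analytically across $w=0$, so the limit of the $j$-th derivative is just $j!$ times a Taylor coefficient read off from absolutely convergent products of power series. This buys a noticeably shorter and less error-prone proof of the key lemma; what the paper's route buys in exchange is an explicit formula for $f^{(j)}(s)$ and for $\Gamma^{(j)}$ near the pole at every $s$, not only in the limit, though that extra information is never used. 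Your remaining to-do items (index relabelling, the $k=0$ and $k=1$ conventions, termwise multiplication of the series) are all routine and resolve exactly as you anticipate.
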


To prove this result we  use the following lemma whose proof can be found in Appendix~\ref{App:Proofs}.

\begin{lemma}\label{lemma:limfder}
Define $f(s) \coloneqq (s-1/2-k) \Gamma(1/2-s)$. Then for every $j \in \N$ and $k \in \N_0$ it holds that
\begin{equation}
\lim_{s \rightarrow 1/2+k}f^{(j)}(s)=(-1)^{k+j-1} j! \roundbra{\sum_{\ell_1+\ell_2+\ldots+\ell_{k+1}=j} \frac{\Gamma^{\ell_{k+1}}(1)}{\ell_{k+1}!}
\prod_{i=1}^k \roundbra{k-i+1}^{-(\ell_i+1)} }.
\end{equation}
\end{lemma}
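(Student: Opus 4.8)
The plan is to shift the pole to the origin and reduce $f$ to a product of elementary factors with explicit Taylor coefficients. I would first substitute $w \coloneqq s - 1/2 - k$, so that $1/2 - s = -k - w$ and hence $f(s) = w\,\Gamma(-k-w)$; since $\frac{\d}{\d s} = \frac{\d}{\d w}$, the quantity $\lim_{s \to 1/2+k} f^{(j)}(s)$ equals the $j$-th derivative at $w = 0$ of this function of $w$. The \textbf{first key step} is to peel off the pole of $\Gamma$ by iterating the functional equation $\Gamma(x) = \Gamma(x+1)/x$ exactly $k+1$ times. After $k$ steps one reaches $\Gamma(-k-w) = \Gamma(-w)\big/\bigl((-1)^k \prod_{m=1}^k (m+w)\bigr)$, and one further step together with the prefactor $w$ cancels the remaining simple factor, since $w\,\Gamma(-w) = -\Gamma(1-w)$. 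This yields the closed form
\begin{equation*}
f(s) = (-1)^{k+1}\,\frac{\Gamma(1-w)}{\prod_{m=1}^k (m+w)},
\end{equation*}
which is manifestly analytic near $w = 0$; in particular the singularity of $f$ at $s = 1/2+k$ is removable and $\lim_{s \to 1/2+k} f^{(j)}(s) = j!\,[w^j] f$, the coefficient of $w^j$ in the Taylor expansion times $j!$.

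With the closed form available, the \textbf{second step} is coefficient extraction from a product of $k+1$ power series. I would record the Taylor coefficients of each factor at $w = 0$: for $\Gamma(1-w)$ the coefficient of $w^{\ell}$ is $(-1)^{\ell}\Gamma^{(\ell)}(1)/\ell!$ (the chain rule contributes $(-1)^\ell$), while for each $1/(m+w) = m^{-1}(1+w/m)^{-1}$ the geometric series gives $(-1)^{\ell} m^{-(\ell+1)}$ for the coefficient of $w^{\ell}$. The Cauchy product then writes $[w^j]f$ as a sum over compositions $\ell_1 + \cdots + \ell_{k+1} = j$, where I assign the index $k+1$ to the factor $\Gamma(1-w)$ and, to match the statement, the index $i \in \curlybra{1,\dots,k}$ to the factor $1/(m+w)$ with $m = k-i+1$.

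The \textbf{final step} is sign bookkeeping. The factors $(-1)^{\ell_1}, \dots, (-1)^{\ell_{k+1}}$ combine with the prefactor $(-1)^{k+1}$ to give $(-1)^{k+1}(-1)^{\ell_1 + \cdots + \ell_{k+1}} = (-1)^{k+j+1} = (-1)^{k+j-1}$, the reindexing $m = k-i+1$ turns the product into $\prod_{i=1}^k (k-i+1)^{-(\ell_i+1)}$, and multiplying by $j!$ reproduces the right-hand side of Lemma~\ref{lemma:limfder} verbatim.

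I expect the \textbf{main obstacle} to be the clean derivation of the closed form and the control of signs: the telescoping of the Gamma recursion must be carried out for general $k$, and one must verify that the single factor $-w$ produced at the last step $\Gamma(-w) = \Gamma(1-w)/(-w)$ is exactly what the prefactor $w$ removes, leaving an analytic function rather than a residual pole. Everything after that is routine but sign-sensitive; I would guard against errors by checking the boundary cases $k=0$, where $f = -\Gamma(1-w)$ and the formula collapses to $(-1)^{j+1}\Gamma^{(j)}(1)$, and $k=1$, $j=1$, where it gives $-1 - \Gamma'(1)$, both of which I can confirm by direct differentiation.
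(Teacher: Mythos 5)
Your proof is correct, and it takes a genuinely more direct route than the paper's. The paper first establishes an auxiliary identity $f^{(j)}(s)=(-1)^{j-1}\bigl[j\,\Gamma^{(j-1)}(1/2-s)-\Gamma^{(j)}(1/2-s)(s-1/2-k)\bigr]$ (Lemma~\ref{lemma:fder}), then expands $\Gamma$ in a Laurent series about its simple pole at $-k$ with coefficients $c_{k,\ell}$, differentiates that series $j$ times, and finally takes the limit $\varepsilon\to 0$, relying on an explicit cancellation of the divergent $\varepsilon^{-j}$ terms between the two pieces. You instead observe that the singularity of $f$ at $s=1/2+k$ is removable: iterating $\Gamma(x)=\Gamma(x+1)/x$ gives the closed form $f=(-1)^{k+1}\Gamma(1-w)/\prod_{m=1}^k(m+w)$ with $w=s-1/2-k$, so $\lim_{s\to 1/2+k}f^{(j)}(s)=j!\,[w^j]f$ and the result is pure Taylor-coefficient extraction. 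The combinatorial core is the same in both arguments --- the paper's Leibniz expansion of $g(z)=(z+k)\Gamma(z)=\Gamma(z+k+1)/\bigl(z(z+1)\cdots(z+k-1)\bigr)$ into $k+1$ factors is exactly your Cauchy product, and indeed $f(s)=-g(1/2-s)$, so your argument can be read as the observation that the paper's detour through the Laurent series and the pole-cancellation limit is unnecessary. What your approach buys is brevity and a structural explanation for why the limit is finite (analyticity, rather than cancellation of singular parts); what the paper's buys is the intermediate Laurent data for $\Gamma$ near $-k$, which it writes out explicitly but does not reuse elsewhere. Your sign bookkeeping and the boundary checks at $k=0$ and $(k,j)=(1,1)$ are accurate.
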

\begin{proof}[Proof of Lemma~\ref{lem:H1s}]
As by definition $\mathcal H_1 = f^N$, we have
\begin{equation}
\lim_{s \rightarrow 1/2+k} \sqbra{\mathcal{H}_1(s)}^{(j)} = \lim_{s \rightarrow 1/2+k} \sqbra{f^{N} \roundbra{s}}^{(j)},
\end{equation}
with $f$ defined in Lemma~\ref{lemma:limfder}.
Using Leibniz' rule leads to
\begin{equation}
\lim_{s \rightarrow 1/2+k} \sqbra{\mathcal{H}_1(s)}^{(j)} = \sum_{j_1+j_2+\ldots+j_N=j} \binom{j}{j_1,j_2,\ldots,j_N} \prod_{1\leq t \leq N} \lim_{s \rightarrow 1/2+k} f^{(j_t)}(s).
\end{equation}
Applying Lemma~\ref{lemma:limfder} we obtain
\begin{equation}
\begin{aligned}
&\lim_{s \rightarrow 1/2+k} \sqbra{\mathcal{H}_1(s)}^{(j)}
\\
&  =  \sum_{j_1+j_2+\ldots+j_N=j} \binom{j}{j_1,j_2,\ldots,j_N} \\
&\quad \quad \cdot  \prod_{1\leq t \leq N} \sqbra{(-1)^{k+j_t-1} j_t! \roundbra{\sum_{\ell_1+\ell_2+\ldots+\ell_{k+1}=j_t} \frac{\Gamma^{\ell_{k+1}}(1)}{\ell_{k+1}!}
\prod_{i=1}^k \roundbra{k-i+1}^{-(\ell_i+1)}  }}
\\
&  =  \sum_{j_1+j_2+\ldots+j_N=j} \frac{j!}{j_1!j_2!\ldots j_N!} (-1)^{\sum_{t=1}^N (k+j_t-1)} j_1! j_2! \ldots j_N!
\\
&\quad \quad \cdot  \prod_{1\leq t \leq N}  \roundbra{\sum_{\ell_1+\ell_2+\ldots+\ell_{k+1}=j_t} \frac{\Gamma^{\ell_{k+1}}(1)}{\ell_{k+1}!}
\prod_{i=1}^k \roundbra{k-i+1}^{-(\ell_i+1)} }  \, ,
\end{aligned}
\end{equation}
which coincides with the lemma statement \eqref{eq:lem:H1s}.
\end{proof}

\section{Moment generating functions and Chernoff bounds}
There exists no moment generating function (MGF) for the product $X$ of $N$ iid standard Gaussian random variables.
However, clearly the moments exist and are given by
\begin{equation*}
\EE [X^k]
=
\EE\sqbra{\prod_{i=1}^N g_i^k}
=
\prod_{i=1}^N \EE[g_i^k]
=
\EE[g_1^k]^N
=
\begin{cases}
 \sqbra{(k-1)!!}^N
	& \text{ for even }k \, ,
\\
 0 & \text{ for odd }k \, ,
\end{cases}
\end{equation*}
with $n!!\coloneqq n(n-2)(n-4) \dots$ denoting the \emph{double factorial}.
Thus, the moments of $Y = X^2$ and $Z = |X|$ also exist.

The following proposition additionally provides the MGF on $[0,\infty)$ for the random variables $Y$ and $Z$. The proof of this proposition follows from properties~\eqref{int0xeG}, \eqref{zrhoG}, and \eqref{int0eG} in the Supplemental Materials and can be found in Ref.~\cite{SPIE_Gauss}.

\begin{proposition}[MGF of $Y$ and $Z$]\label{prop:MGF}
Let $\{g_i\}_{i=1}^N$ be a set of $N$ iid standard Gaussian random variables, i.e.\ $g_i \sim \mathcal{N}(0,1)$.
For the random variables $Y\coloneqq\prod_{i=1}^N g_i^2$ and $Z\coloneqq\prod_{i=1}^N |g_i|$ and all $t > 0$
\begin{align}
\EE\, \e^{-t Y} &  = \frac{1}{\pi^{N/2}}
    G^{N,1}_{1,N} \left(\frac{1}{2^N t} \Bigg|
        \begin{matrix}
          1 \\ 1/2,1/2, \dots, 1/2
        \end{matrix}
        \right) ,
\\
\EE\, \e^{-t Z} &  = \frac{1}{\pi^{\frac{N+1}{2}}}
    G^{N,2}_{2,N} \left( \frac{1}{2^{N-2} t^2} \Bigg|
        \begin{matrix}
           1, 1/2 \\ 1/2,1/2, \dots, 1/2
        \end{matrix}
        \right) .\label{eq:mgfZ}
\end{align}
\end{proposition}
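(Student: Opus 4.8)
\section*{Proof proposal for Proposition~\ref{prop:MGF}}

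The plan is to obtain each moment generating function as a one-sided Laplace transform of the corresponding probability density, and then to evaluate that transform with the Meijer G-function identities \eqref{int0xeG}, \eqref{zrhoG}, and \eqref{int0eG}. Since $Y$ and $Z$ are almost surely positive, for $t>0$ we have $\EE\,\e^{-tY}=\int_0^\infty \e^{-ty}f_Y(y)\,\d y$ and likewise for $Z$, so the task reduces to writing the densities $f_Y$ and $f_Z$ as Meijer G-functions and integrating them against a decaying exponential.

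First I would determine $f_Y$ from its Mellin transform. A direct computation analogous to the one in the proof of Proposition~\ref{lem:gaussian_product_measurements.one_eta}, using the substitution $z=x^2/2$ and independence, gives $\EE[Y^{s-1}]=\pi^{-N/2}\,2^{N(s-1)}\,\Gamma(s-\tfrac12)^N$. Matching this ratio of Gamma functions against the Mellin transform of a $G^{N,0}_{0,N}$ identifies
\begin{equation*}
f_Y(y)=\frac{1}{\pi^{N/2}\,2^N}\,G^{N,0}_{0,N}\!\left(\frac{y}{2^N}\,\Big|\,\begin{matrix} - \\ -1/2,\ldots,-1/2\end{matrix}\right),\qquad y>0.
\end{equation*}
(Equivalently, $f_Y$ could be read off by differentiating the CDF in Proposition~\ref{prop:CDF} and using the inversion $G^{m,n}_{p,q}(1/x)=G^{n,m}_{q,p}(x)$.) Substituting this into $\EE\,\e^{-tY}=\int_0^\infty \e^{-ty}f_Y(y)\,\d y$ and applying the Laplace-type integral identity \eqref{int0eG}, together with the power-shift rule \eqref{zrhoG} to absorb the factors of $t$ and $2^N$ into the argument and parameter lists, yields the claimed $G^{N,1}_{1,N}$ with argument $1/(2^N t)$, the prefactor $\pi^{-N/2}$ being carried through unchanged.

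For $Z$ I would exploit $Z=\sqrt{Y}$, equivalently \eqref{eq:relate_Y_Z}, so that $f_Z(z)=2z\,f_Y(z^2)$ and $\EE\,\e^{-tZ}=\int_0^\infty \e^{-tz}\,2z\,f_Y(z^2)\,\d z$. The cleanest route is again via the Mellin transform $\EE[Z^{s-1}]=\pi^{-N/2}\,2^{N(s-1)/2}\,\Gamma(s/2)^N$: representing the Laplace transform as a Mellin--Barnes integral, the exponential contributes a factor $\Gamma(w)$, and the Legendre duplication formula $\Gamma(w)=2^{w-1}\pi^{-1/2}\,\Gamma(w/2)\,\Gamma(\tfrac{w+1}{2})$ splits this into the two extra parameters that promote $G^{N,1}_{1,N}$ to $G^{N,2}_{2,N}$ and produce both the argument $1/(2^{N-2}t^2)$ and the prefactor $\pi^{-(N+1)/2}$. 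I expect this duplication-and-bookkeeping step to be the main obstacle: one must track how the doubling of the Gamma argument interacts with the parameter lists and with the powers of $2$ and $\pi$ in the argument and normalisation, and verify that the resulting Mellin--Barnes contour can be closed so that \eqref{int0eG} applies — this is precisely where the existence of the MGF on all of $[0,\infty)$, rather than merely of the moments, must be confirmed. The finite-interval identity \eqref{int0xeG} is then used to justify the interchange of limits and to control the behaviour at the endpoints $z=0$ and $z=\infty$.
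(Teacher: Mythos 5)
Your overall strategy is exactly the one the paper (implicitly) takes: the paper does not spell this proof out but states that it ``follows from properties \eqref{int0xeG}, \eqref{zrhoG}, and \eqref{int0eG}'' and defers the details to Ref.~\cite{SPIE_Gauss}, i.e.\ one writes the density as a Meijer G-function and evaluates the Laplace transform with the tabulated integral identities. Your identification of $f_Y$ is correct (it follows from \eqref{pdf_product_gaussians} together with \eqref{zrhoG}), as are both Mellin transforms. However, you have swapped the roles of the two integral identities, and as written the key step for $Y$ fails: \eqref{int0eG} evaluates $\int_0^\infty \e^{-\beta x}G(\alpha x^2)\,\d x$, so it cannot be applied to $\int_0^\infty \e^{-ty}\,G^{N,0}_{0,N}\bigl(y/2^N \mid -1/2,\dots,-1/2\bigr)\,\d y$, whose argument is \emph{linear} in the integration variable. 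The identity you need there is \eqref{int0xeG} with $\rho=0$, $\beta=t$, $\alpha=2^{-N}$, which gives $t^{-1}G^{N,1}_{1,N}\bigl(1/(2^N t)\mid 0;-1/2,\dots,-1/2\bigr)$; a final shift by $\rho=1$ via \eqref{zrhoG} absorbs the prefactor $1/(2^N t)$ and produces the stated parameters $(1;1/2,\dots,1/2)$. Correspondingly, \eqref{int0xeG} is not a ``finite-interval identity used to justify the interchange of limits'' --- it is the main computational tool for $Y$, and its validity conditions \eqref{eq:cond_v} (checked with $m=N$, $n=p=0$, $q=N$, $b_j=-1/2$, $\rho=0$) are precisely what guarantee that the MGF exists for all $t>0$.

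For $Z$ your duplication-formula detour is correct but unnecessary: writing $f_Z(z)=2z\,f_Y(z^2)=\tfrac{2}{(2\pi)^{N/2}}\,G^{N,0}_{0,N}\bigl(z^2/2^N\mid 0,\dots,0\bigr)$ puts the integrand exactly in the form $\e^{-\beta z}G(\alpha z^2)$ to which \eqref{int0eG} applies directly with $\alpha=2^{-N}$, $\beta=t$, yielding the argument $4\alpha/\beta^2=1/(2^{N-2}t^2)$, the two extra parameters $0,1/2$, and the factor $\pi^{-1/2}$ that turns $\pi^{-N/2}$ into $\pi^{-(N+1)/2}$; a shift by $\rho=1/2$ via \eqref{zrhoG} then gives \eqref{eq:mgfZ}. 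The Legendre duplication of $\Gamma(w)$ is precisely how \eqref{int0eG} is derived in the first place, so what you propose amounts to re-proving that identity inside the argument rather than invoking it.
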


\begin{remark}
All moments of $Y$ and $Z$ exist, and the MGF (given by the Meijer G-function) is smooth at the origin. Hence, we indeed have
\begin{align}
\EE[ Y^k ]
= (-1)^k \lim_{t \searrow 0}
\frac{\partial^k}{\partial t^k} \EE \, \e^{-t Y} \, .
\end{align}
\end{remark}
\begin{remark}
Knowing all the moments of the random variable $Y$, it seems trivial to compute the corresponding moment generating function for $t<0$
\begin{equation}
\EE \left[ e^{tY}\right] = \EE \left[ \sum_{j=0}^{\infty} \frac{t^j}{j!} Y^j\right].
\end{equation}
It is well known that one can exchange the expectation and the series if the series converges absolutely and this is not true in our case. 
Even more,
\begin{equation}
\lim_{j\to \infty} \EE \left[\frac{t^j}{j!} Y^j\right] = \infty
\end{equation}
for $N\geq 2$ and any $t>0$. 
Hence, the moment series does indeed not converge for any $t\neq 0$ and $N>1$. 
% \mk{better?}
\end{remark}

Even more, computing the MGF of $Y$ via the Meijer G-function allows for the following result which could be of independent interest.
\begin{corollary}
For $k \in \N_0$ and $N \in \N$ (with a convention that $(-1)!!=1$)
it holds that
\begin{equation}
\lim_{t \searrow 0}
\frac{1}{t^k}
    G^{N,1}_{1,N} \left(\frac{1}{2^N t} \Bigg|
        \begin{matrix}
          1-k \\ 1/2,1/2, \dots, 1/2
        \end{matrix}
        \right)
= \sqbra{\sqrt{\pi}(2k-1)!!}^N.
\end{equation}
\end{corollary}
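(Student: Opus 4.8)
The plan is to combine the Meijer~G representation of the moment generating function of $Y$ from Proposition~\ref{prop:MGF} with the explicit moments of $Y$ and the moment/derivative identity stated in the Remark following that proposition. Writing the right-hand side as $\sqbra{\sqrt\pi(2k-1)!!}^N = \pi^{N/2}\sqbra{(2k-1)!!}^N$ and recalling that $Y=X^2$, so that $\EE[Y^k]=\EE[X^{2k}]=\sqbra{(2k-1)!!}^N$ (with the convention $(-1)!!=1$ covering the case $k=0$), the target becomes the assertion that $\pi^{N/2}\EE[Y^k]$ equals the claimed limit. Thus the whole statement is an encoding of the moment formula through the Meijer~G-function, and the task reduces to relating the $G$-function with upper parameter $1-k$ to $k$-fold $t$-differentiation of the MGF.

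To that end I would first establish the parameter-shift formula
\begin{equation*}
\frac{\d^k}{\d t^k}\, G^{N,1}_{1,N}\!\left(\frac{1}{2^N t}\,\Big|\begin{matrix}1\\ 1/2,\dots,1/2\end{matrix}\right)
= \frac{(-1)^k}{t^k}\, G^{N,1}_{1,N}\!\left(\frac{1}{2^N t}\,\Big|\begin{matrix}1-k\\ 1/2,\dots,1/2\end{matrix}\right).
\end{equation*}
This I would prove from the Mellin--Barnes integral representation of the Meijer~G-function, in which the upper parameter $1$ contributes the factor $\Gamma(-s)$ while the argument contributes $\roundbra{2^N t}^{s}$, so the $t$-dependence of the integrand is $t^{s}$. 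Differentiating $k$ times under the integral produces $\frac{\d^k}{\d t^k}t^{s}=\frac{\Gamma(s+1)}{\Gamma(s-k+1)}\,t^{s-k}$, and the elementary Gamma identity
\begin{equation*}
\Gamma(-s)\,\frac{\Gamma(s+1)}{\Gamma(s-k+1)}
=\Gamma(-s)\prod_{i=0}^{k-1}(s-i)
=(-1)^k\,\Gamma(k-s)
\end{equation*}
turns the integrand into exactly the one representing the Meijer~G-function with upper parameter $1-k$, since there the first factor is $\Gamma(1-(1-k)-s)=\Gamma(k-s)$.

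With the shift formula in hand, Proposition~\ref{prop:MGF} and the Remark combine to give
\begin{align*}
\sqbra{(2k-1)!!}^N
&=\EE[Y^k]
=(-1)^k\lim_{t\searrow 0}\frac{\d^k}{\d t^k}\,\EE\,\e^{-tY}\\
&=\frac{1}{\pi^{N/2}}\lim_{t\searrow 0}\frac{1}{t^k}\,G^{N,1}_{1,N}\!\left(\frac{1}{2^N t}\,\Big|\begin{matrix}1-k\\ 1/2,\dots,1/2\end{matrix}\right),
\end{align*}
and multiplying by $\pi^{N/2}$ yields the claim.

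I expect the main obstacle to be analytic rather than algebraic: I must justify differentiating under the Mellin--Barnes contour integral (via analyticity of $\EE\,\e^{-tY}$ in $t$ and uniform convergence of the contour integral together with its $t$-derivatives on compact subsets of $t>0$), and must justify interchanging the limit $t\searrow 0$ with the $k$-fold differentiation -- this is precisely the smoothness of the MGF at the origin asserted in the Remark, which itself rests on the existence of all moments of $Y$. As an independent cross-check, and an alternative self-contained route that avoids the derivative formula, one can instead read off the $z\to\infty$ asymptotics of $G^{N,1}_{1,N}(z\,|\,1-k,\dots)$ directly from its residue expansion as in the proof of Theorem~\ref{thm:MeijerGseries}: the dominant contribution comes from the simple pole of $\Gamma(k-s)$ closest to the contour, at $s=k$, and evaluates to $\Gamma(\tfrac12+k)^N z^{-k}$; using $\Gamma(\tfrac12+k)=2^{-k}(2k-1)!!\sqrt\pi$ and $z^{-k}=(2^N t)^k$ reproduces the same limit.
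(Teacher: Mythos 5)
Your proposal is correct and follows essentially the same route as the paper: both combine $\EE[Y^k]=\sqbra{(2k-1)!!}^N$ with the Remark's limit formula and the parameter-shift identity $\frac{\partial^k}{\partial t^k}G^{N,1}_{1,N}\bigl(\tfrac{1}{2^Nt}\big|1;\,\tfrac12,\dots\bigr)=\frac{(-1)^k}{t^k}G^{N,1}_{1,N}\bigl(\tfrac{1}{2^Nt}\big|1-k;\,\tfrac12,\dots\bigr)$. The only (minor) difference is that you derive this identity directly from the Mellin--Barnes integrand via $\Gamma(-s)\prod_{i=0}^{k-1}(s-i)=(-1)^k\Gamma(k-s)$, whereas the paper asserts it by induction (presumably on the derivative property of Meijer G-functions); your version is arguably more transparent, and your residue-based cross-check is a nice consistency test.
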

\begin{proof}
The result follows from the following two observations
\begin{align}
\EE[ Y^k ] & = \EE\sqbra{\prod_{i=1}^N g_i^{2k}} = \prod_{i=1}^N \EE\sqbra{g_i^{2k}} = \sqbra{(2k-1)!!}^N 
\\ \nonumber
 \frac{\partial^k}{\partial t^k} \EE \, \e^{-t Y}  &= \frac{\partial^k}{\partial t^k} \frac{1}{\pi^{N/2}}
    G^{N,1}_{1,N} \left(\frac{1}{2^N t} \Bigg|
        \begin{matrix}
          1 \\ 1/2,1/2, \dots, 1/2
        \end{matrix}
        \right) \\
& = \frac{1}{\pi^{N/2}} \frac{(-1)^k}{t^k}
    G^{N,1}_{1,N} \left(\frac{1}{2^N t} \Bigg|
        \begin{matrix}
          1-k \\ 1/2,1/2, \dots, 1/2
        \end{matrix}
        \right),
\end{align}
where the last equality can be easily proven by induction.
\end{proof}

The analogous results then also hold for the random variable $Z$ but appear to be more technical.

The $k$-th moment of $Y$ and $Z$ can also be obtained as the $N$-th power of the $k$-th moment of a Gaussian random variable squared or of its absolute value, respectively.
However, often it is also important to know the MGF of the random variable, for instance, in order to obtain a tail bound for sums of iid copies of the random variable (Chernoff's inequality).

Next, using Proposition \ref{prop:MGF} we compute the Chernoff bound for random variables $\sum_{j=1}^M Y_j$ and $\sum_{j=1}^M Z_j$.

\begin{proposition}[Chernoff bound for $\sum_{j=1}^M Y_j$ and $\sum_{j=1}^M Z_j$]\label{Prop:ChernoffYZ}
  Let $\{g_{i,j}\}_{i=1,j=1}^{N,M}$ be a set of independent identically distributed standard Gaussian random variables (i.e., $g_{i,j} \sim \mathcal{N}(0, 1)$).
  Then, we have for $Y_j \coloneqq \prod_{i=1}^N g_{i,j}^2$ with $j \in [M]$
  \begin{align}
    \PP\left(\sum_{j=1}^M Y_j \leq t \right) \le \frac{1}{\pi^{\frac{MN}{2}}} \min \left\{ F_Y(t), G_Y(t) \right\},
  \end{align}
  where
  \begin{align}
    F_Y(t) &\coloneqq  e^{\frac{t}{2^N}}  \sqbra{G^{N,1}_{1,N} \left( 1 \Bigg| \begin{matrix} 1 \\ 1/2,1/2, \dots, 1/2 \end{matrix} \right)}^M , \label{Eq:Yapprox1}\\
    G_Y(t) &\coloneqq  e^{\frac{M}{2}}  \sqbra{G^{N,1}_{1,N} \left(\frac{t}{2^{N-1}M} \Bigg| \begin{matrix} 1 \\ 1/2,1/2, \dots, 1/2 \end{matrix} \right)}^M \label{Eq:Yapprox2}.
  \end{align}

  Furthermore, for $Z_j \coloneqq \prod_{i=1}^N \abs{g_{i,j}}$ with $j \in \sqbra{M}$ we have
  \begin{align}
    \PP\left(\sum_{j=1}^M Z_j \leq t \right) \le \frac{1}{\pi^{\frac{M(N+1)}{2}}} \min & \left\{ F_Z(t), G_Z(t) \right\},
  \end{align}
  where
  \begin{align}
    F_Z(t) \coloneqq  e^{t \cdot 2^{-\frac{N-2}{2}}}  \sqbra{G^{N,2}_{2,N} \left( 1 \Bigg| \begin{matrix} 1,1/2 \\ 1/2,1/2, \dots, 1/2 \end{matrix} \right)}^M ,\label{Eq:Zapprox1} \\
    G_Z(t) \coloneqq  e^{M}  \sqbra{G^{N,2}_{2,N} \left(\frac{t^2}{2^{N-2}M^2} \Bigg| \begin{matrix} 1,1/2 \\ 1/2,1/2, \dots, 1/2 \end{matrix} \right)}^M. \label{Eq:Zapprox2}
  \end{align}
  \end{proposition}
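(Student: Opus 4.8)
The plan is to apply the one-sided (lower-tail) Chernoff bound and then insert the closed-form moment generating functions supplied by Proposition~\ref{prop:MGF}. First I would record that the summands are iid: each $Y_j$ is a product of the $N$ independent squared Gaussians $\{g_{i,j}^2\}_{i=1}^N$, and distinct indices $j$ involve disjoint families of the $g_{i,j}$, so $\{Y_j\}_{j=1}^M$ are iid copies of $Y$; likewise for $\{Z_j\}_{j=1}^M$. Since $Y_j \geq 0$, for any $\theta > 0$ Markov's inequality applied to the nonnegative variable $\e^{-\theta \sum_j Y_j}$ gives
\begin{equation*}
\PP\roundbra{\sum_{j=1}^M Y_j \leq t} = \PP\roundbra{\e^{-\theta \sum_j Y_j} \geq \e^{-\theta t}} \leq \e^{\theta t}\, \EE\, \e^{-\theta \sum_j Y_j} = \e^{\theta t} \roundbra{\EE\, \e^{-\theta Y}}^M .
\end{equation*}
The expectation is finite for every $\theta > 0$ because $\e^{-\theta Y} \leq 1$, so no further justification of the product-over-$j$ step is needed beyond independence.

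Next I would substitute the MGF of $Y$ from Proposition~\ref{prop:MGF}, obtaining the master bound, valid for all $\theta > 0$,
\begin{equation*}
\PP\roundbra{\sum_{j=1}^M Y_j \leq t} \leq \frac{1}{\pi^{MN/2}}\, \e^{\theta t} \sqbra{ G^{N,1}_{1,N}\left( \frac{1}{2^N \theta} \, \Bigg| \, \begin{matrix} 1 \\ 1/2, \dots, 1/2 \end{matrix} \right) }^M .
\end{equation*}
The two stated estimates then arise from two convenient (and generally suboptimal) choices of $\theta$. Taking $\theta = 2^{-N}$ makes the argument of the Meijer G-function equal to $1$ and the prefactor $\e^{\theta t} = \e^{t/2^N}$, which is exactly $F_Y(t)$. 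Taking instead $\theta = M/(2t)$ makes the exponential prefactor constant, $\e^{\theta t} = \e^{M/2}$, while the argument becomes $\tfrac{1}{2^N\theta} = t/(2^{N-1}M)$, giving $G_Y(t)$. Since each choice of $\theta$ yields a valid upper bound, so does their minimum, which is the claimed inequality for $\sum_j Y_j$.

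The bound for $\sum_j Z_j$ is obtained identically, starting from $\PP(\sum_j Z_j \leq t) \leq \e^{\theta t}(\EE\,\e^{-\theta Z})^M$ and inserting the MGF of $Z$ from Proposition~\ref{prop:MGF}, whose G-function argument is $\tfrac{1}{2^{N-2}\theta^2}$ and whose prefactor contributes $\pi^{-M(N+1)/2}$. Here the choice $\theta = 2^{-(N-2)/2}$ sets the argument to $1$ and produces $F_Z(t)$ with prefactor $\e^{t\, 2^{-(N-2)/2}}$, while $\theta = M/t$ makes the exponential equal to $\e^M$ and the argument equal to $t^2/(2^{N-2}M^2)$, producing $G_Z(t)$; the minimum of the two gives the result.

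There is no serious obstacle here: the argument is a textbook Chernoff estimate, and the only points requiring care are (i) using the sign convention $\e^{-\theta(\cdot)}$ with $\theta > 0$, which is the one appropriate for a \emph{lower}-tail bound, and (ii) verifying that the two algebraic substitutions for $\theta$ reproduce the stated arguments of the Meijer G-functions and the stated exponential prefactors. I deliberately do not optimize over $\theta$; the two displayed choices simply isolate, respectively, the $t$-dependence in the exponential factor and in the G-function factor, which is why each is the more useful bound in a different regime of $t$.
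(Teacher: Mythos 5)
Your proposal is correct and follows the same core argument as the paper: the lower-tail Chernoff bound $\PP(\sum_j Y_j\le t)\le \e^{\theta t}\,(\EE\,\e^{-\theta Y})^M$ for any $\theta>0$, insertion of the Meijer G-function MGFs from Proposition~\ref{prop:MGF}, and the choices $\theta=2^{-N}$, $\theta=M/(2t)$ (resp.\ $\theta=2^{-(N-2)/2}$, $\theta=M/t$), whose algebra you verify correctly. The only difference is that you state these $\theta$ values directly and check them, whereas the paper derives them by setting $f_Y'(\theta)=0$ and approximately solving the resulting Meijer G-function equation via the truncated power-log expansions of Lemmas~\ref{lemma:sumYpowerlog} and~\ref{lemma:sumZpowerlog}; that derivation is purely motivational, so your shorter route loses nothing in rigor.
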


%%%%%%%%%%%%%%%%%%%%%%%%
% \newpage
% $\ $ \vspace{1cm} $\ $
\afterpage{%
\newgeometry{left = 2cm, right = 2cm, top = 4cm}
\begin{figure}[hp!]
  \subfloat[$N = 4$]{
     \includegraphics[width=0.32\textwidth]{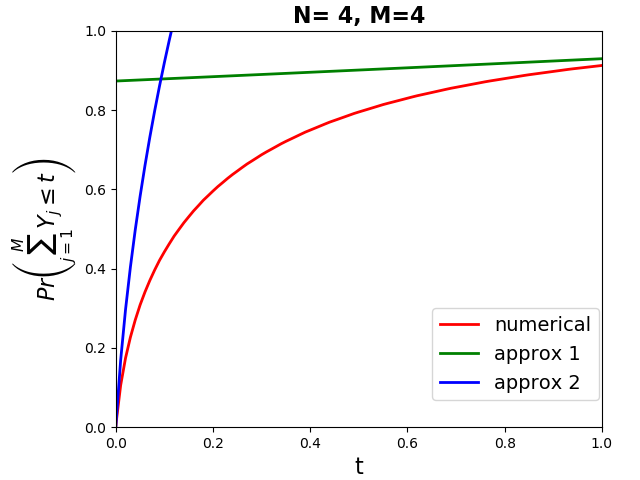}
  \hfill   \includegraphics[width=0.32\textwidth]{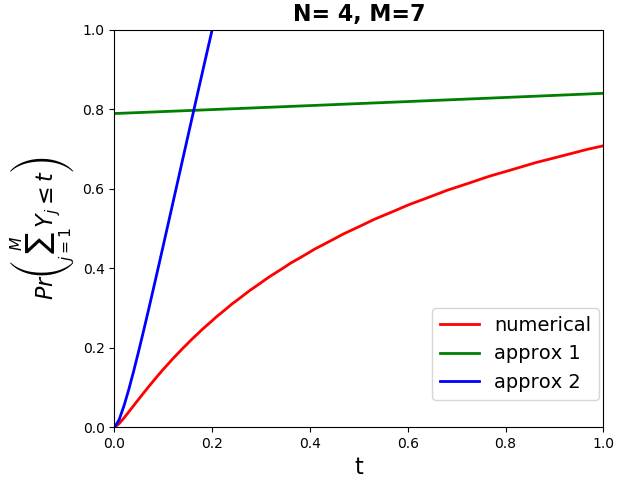}
      \hfill
  \includegraphics[width=0.32\textwidth]{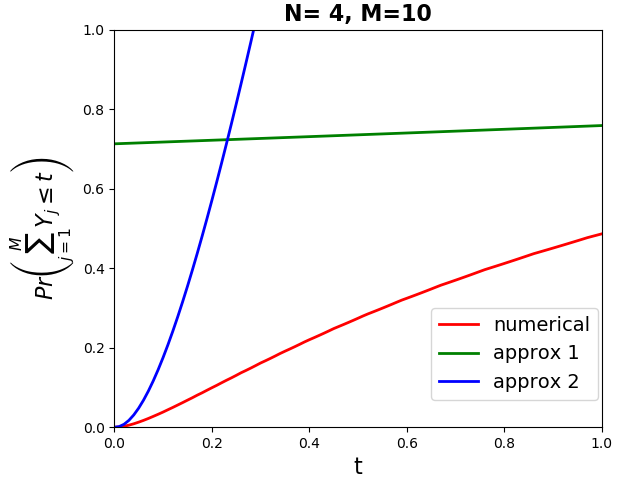}
       } \\
   \subfloat[$N = 5$]{ %
     \includegraphics[width=0.32\textwidth]{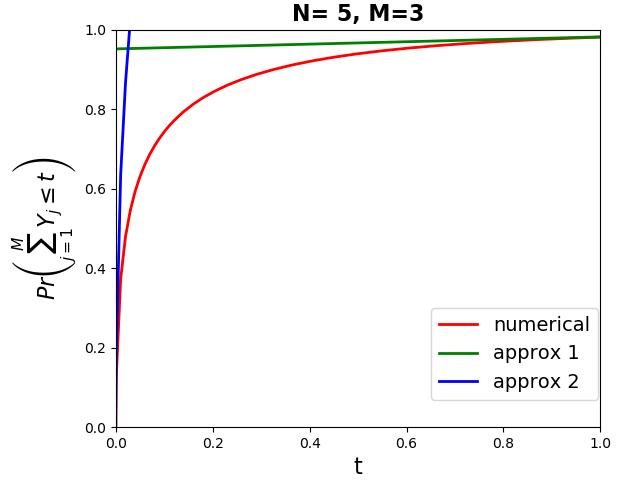}
 \hfill
         \includegraphics[width=0.32\textwidth]{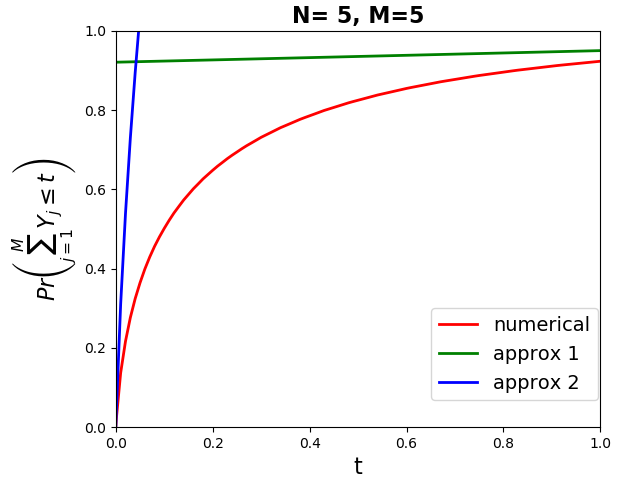}
   \hfill
  \includegraphics[width=0.32\textwidth]{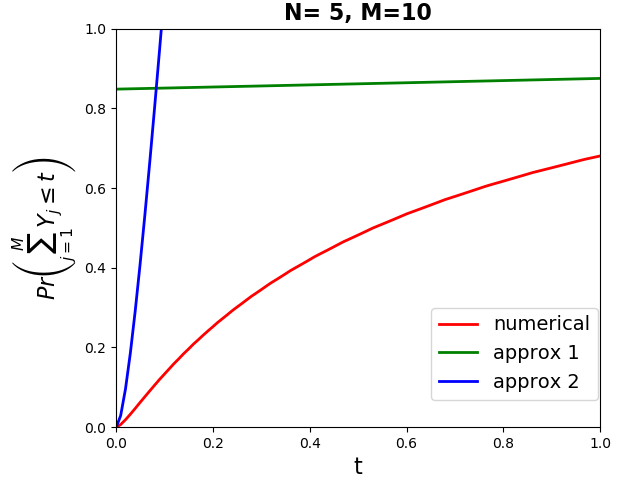}
       }\\
   \subfloat[$N = 7$] {
  \includegraphics[width=0.32\textwidth]{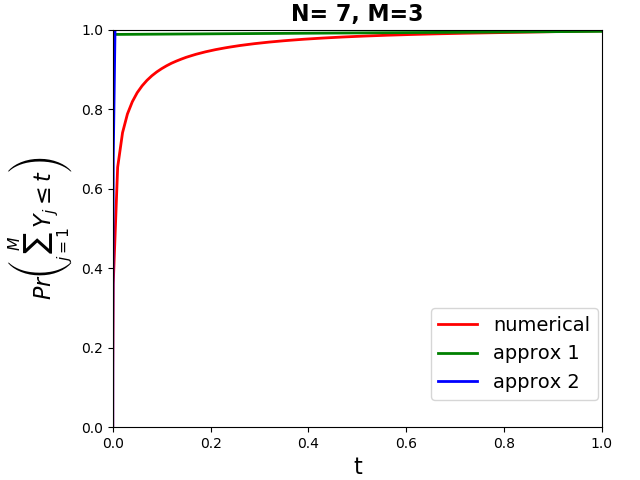}
   \hfill
  \includegraphics[width=0.32\textwidth]{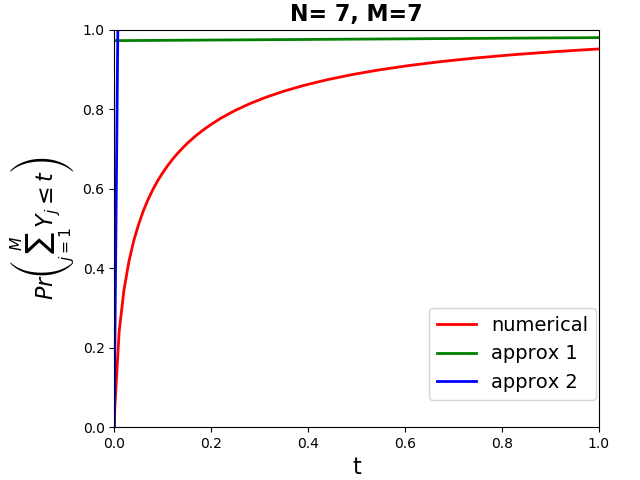}
   \hfill
  \includegraphics[width=0.32\textwidth]{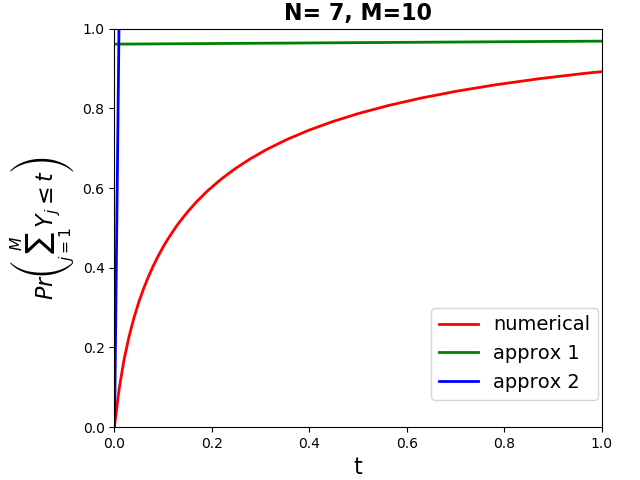}
       }\\[1em]
  \caption{%
    Numerical comparison of the Chernoff bounds stated in Proposition~\ref{Prop:ChernoffYZ}.
    The green and blue lines show the bounds in %Eqs.~
    \eqref{Eq:Yapprox1} and \eqref{Eq:Yapprox2}, respectively.
    The red line shows~\eqref{Eq:ChernoffMsumY} where for each $t$ the minimum over $\theta$ is approximated numerically by a minimum over a finite number of points.
    Here, we chose the values of $\theta$, over which the minimum is taken, evenly spaced on a logarithmic scale between $2^{-{N/2+2}}$ and $M/t$.
    \label{Fig:ComparisonChernoffY}
  }
 \end{figure}

\newpage
% $\ $ \vspace{1cm} $\ $
%%%%%%%%%%%%%%%%%%%%%%%%
\begin{figure}[hp!]
    \subfloat[$N = 4$]{
       \includegraphics[width=0.32\textwidth]{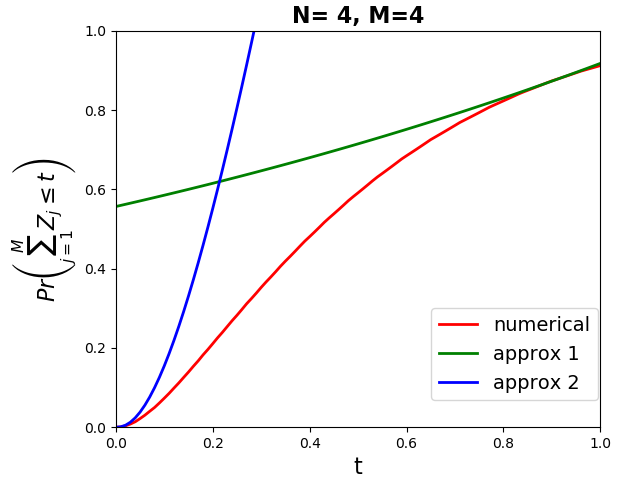}
    \hfill   \includegraphics[width=0.32\textwidth]{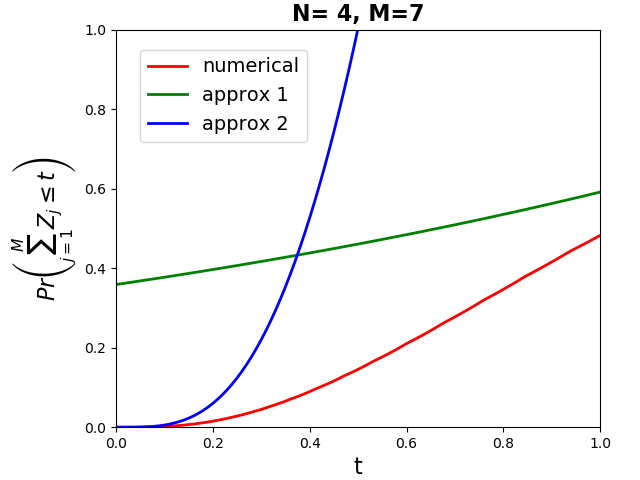}
        \hfill
    \includegraphics[width=0.32\textwidth]{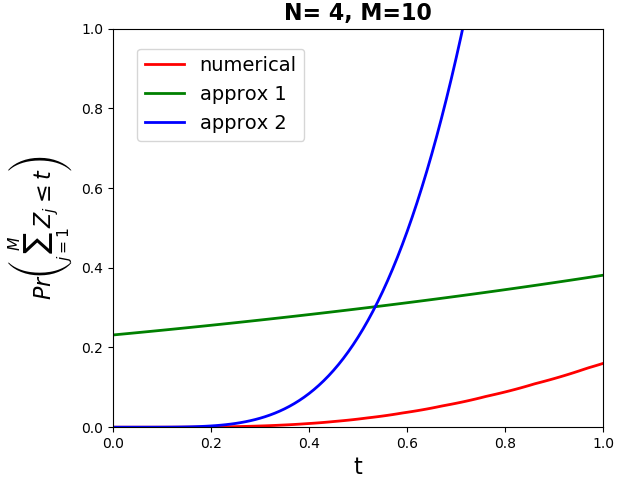}
         } \\
     \subfloat[$N = 5$]{ %
       \includegraphics[width=0.32\textwidth]{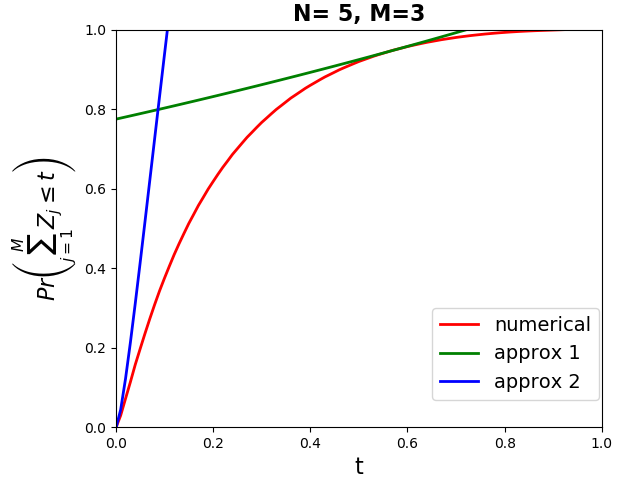}
   \hfill
           \includegraphics[width=0.32\textwidth]{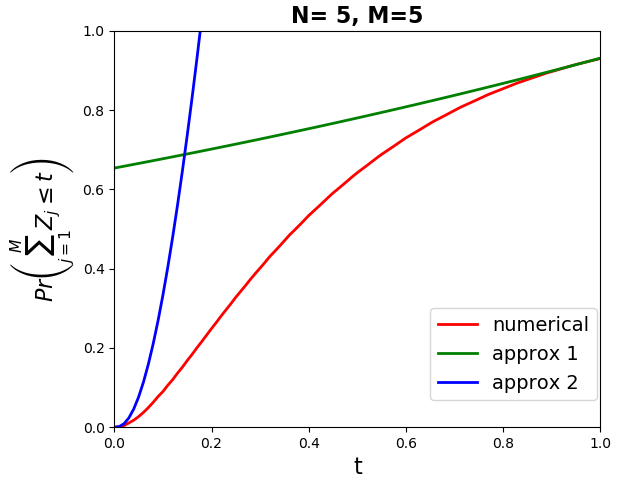}
     \hfill
    \includegraphics[width=0.32\textwidth]{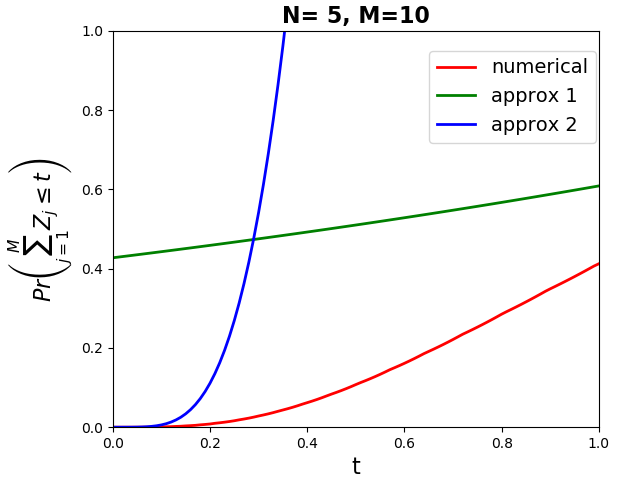}
         }\\
     \subfloat[$N = 7$] {
    \includegraphics[width=0.32\textwidth]{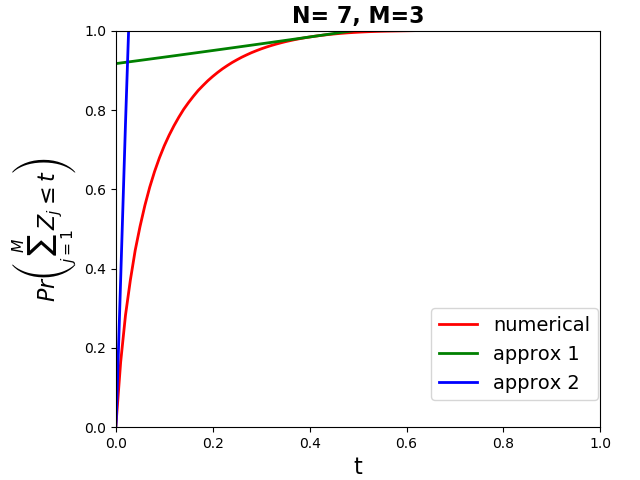}
     \hfill
    \includegraphics[width=0.32\textwidth]{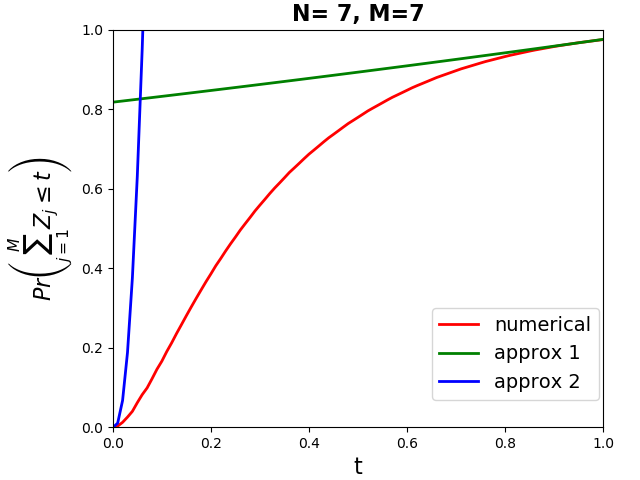}
     \hfill
    \includegraphics[width=0.32\textwidth]{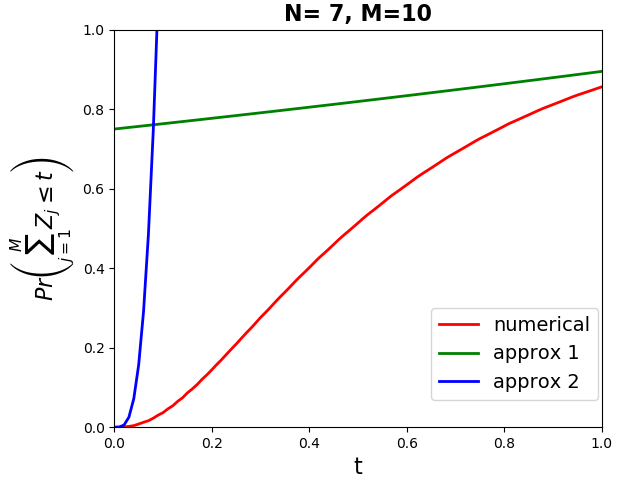}
         }\\[1em]
   \caption{%
     The analogue of Fig.~\ref{Fig:ComparisonChernoffY} for the random variable $\sum_{j=1}^M Z_j$.
     The green and blue lines show the bounds in %Eqs.~
     \eqref{Eq:Zapprox1} and \eqref{Eq:Zapprox2}, respectively.
     \label{Fig:ComparisonChernoffZ}
  }
\end{figure}
\clearpage
\restoregeometry
} % end of \afterpage{...} material

\begin{proof}[Proof of Proposition \ref{Prop:ChernoffYZ}]
For $\theta>0$, by Chernoff's inequality, we have
\begin{align}
\PP \left( \sum_{j=1}^M Y_j \leq t \right) & \leq  \min_{\theta>0}\,  \e^{\theta t} \, \prod_{j=1}^M \EE \left( \e^{-\theta  Y_j} \right) =  \min_{\theta >0}\,  \e^{\theta t} \,\sqbra{\EE \left( \e^{-\theta  Y_1} \right) }^M .
\end{align}
Applying Proposition~\ref{prop:MGF} we obtain
\begin{equation}
\begin{aligned}
\PP \left( \sum_{j=1}^M Y_j \leq t \right) & \leq  \min_{\theta >0}\,  \e^{\theta  t} \,\sqbra{ \frac{1}{\pi^{N/2}}
    G^{N,1}_{1,N} \left(\frac{1}{2^N \theta } \Bigg|
        \begin{matrix}
          1 \\ 1/2,1/2, \dots, 1/2
        \end{matrix}
        \right) }^M \\
& = \frac{1}{\pi^{{\frac{MN}{2}}}} \min_{\theta >0}\,  \e^{\theta  t} \,\sqbra{
    G^{N,1}_{1,N} \left(\frac{1}{2^N \theta } \Bigg|
        \begin{matrix}
          1 \\ 1/2,1/2, \dots, 1/2
        \end{matrix}
        \right) }^M . \label{Eq:ChernoffMsumY}
\end{aligned}
\end{equation}
Next, we define a function
\begin{equation}
f_Y(\theta ) \coloneqq \e^{\theta t} \,\sqbra{
    G^{N,1}_{1,N} \left(\frac{1}{2^N \theta } \Bigg|
        \begin{matrix}
          1 \\ 1/2,1/2, \dots, 1/2
        \end{matrix}
        \right) }^M \, .
\end{equation}
To compute the minimizer, we calculate the corresponding derivative,
\begin{align*}
& \frac{d}{d \theta } f_Y(\theta )  \\
& = t \cdot \e^{\theta t} \,\sqbra{
    G^{N,1}_{1,N} \left(\frac{1}{2^N \theta} \Bigg|
        \begin{matrix}
          1 \\ 1/2,1/2, \dots, 1/2
        \end{matrix}
        \right) }^M  \\
        &
         + \e^{\theta t} \, M \cdot \sqbra{
    G^{N,1}_{1,N} \left(\frac{1}{2^N \theta } \Bigg|
        \begin{matrix}
          1 \\ 1/2,1/2, \dots, 1/2
        \end{matrix}
        \right) }^{M-1} \frac{d}{d \theta }     G^{N,1}_{1,N} \left(\frac{1}{2^N \theta } \Bigg|
        \begin{matrix}
          1 \\ 1/2,1/2, \dots, 1/2
        \end{matrix}
        \right) \\
& = \e^{\theta t}\, \sqbra{
    G^{N,1}_{1,N} \left(\frac{1}{2^N \theta } \Bigg|
        \begin{matrix}
          1 \\ 1/2,1/2, \dots, 1/2
        \end{matrix}
        \right) }^{M-1} 
        \\
        &\phantom{=.}  \cdot \curlybra{t \cdot
    G^{N,1}_{1,N} \left(\frac{1}{2^N \theta } \Bigg|
        \begin{matrix}
          1 \\ 1/2,1/2, \dots, 1/2
        \end{matrix}
        \right) + M \frac{d}{d \theta}    G^{N,1}_{1,N} \left(\frac{1}{2^N \theta } \Bigg|
        \begin{matrix}
          1 \\ 1/2,1/2, \dots, 1/2
        \end{matrix}
        \right)}  \, .
\end{align*}
Since the moment generating function, when it exists, is positive
(i.e.\ $ \EE [\e^{\theta Y}] \geq  \e^{\mu \theta }$ with $\mu = \EE(Y)$)
we have $\frac{d}{d\theta }f_Y(\theta )=0$ if and only if
\begin{equation}
 t \cdot
    G^{N,1}_{1,N} \left(\frac{1}{2^N \theta } \Bigg|
        \begin{matrix}
          1 \\ 1/2,1/2, \dots, 1/2
        \end{matrix}
        \right) + M \frac{d}{d\theta }    G^{N,1}_{1,N} \left(\frac{1}{2^N \theta} \Bigg|
        \begin{matrix}
          1 \\ 1/2,1/2, \dots, 1/2
        \end{matrix}
        \right) = 0 \, .
\end{equation}
Applying \eqref{MGder} from the Supplemental Materials leads to
\begin{align}
\frac{d}{d\theta }    G^{N,1}_{1,N} \left(\frac{1}{2^N \theta } \Bigg|
        \begin{matrix}
          1 \\ 1/2, \dots, 1/2
        \end{matrix}
        \right) 
&= 
        \roundbra{\frac{1}{2^N \theta }}^{-1} \cdot \frac{-1}{2^N \theta ^2}
 G^{N,1}_{1,N} \left(\frac{1}{2^N \theta} \Bigg|
        \begin{matrix}
         0 \\ 1/2,1/2, \dots, 1/2
        \end{matrix}
        \right)  
\nonumber \\
&= 
\frac{-1}{\theta}
G^{N,1}_{1,N} \left(\frac{1}{2^N \theta} \Bigg|
        \begin{matrix}
         0 \\ 1/2,1/2, \dots, 1/2
        \end{matrix}
        \right).
\end{align}
Therefore, we need to find $\theta$ such that
\begin{align}\label{eq:Chernoff_MXeq}
 & t \cdot
    G^{N,1}_{1,N} \left(\frac{1}{2^N \theta } \Bigg|
        \begin{matrix}
          1 \\ 1/2,1/2, \dots, 1/2
        \end{matrix}
        \right) - \frac{M}{\theta}  \, G^{N,1}_{1,N} \left(\frac{1}{2^N \theta} \Bigg|
        \begin{matrix}
          0 \\ 1/2,1/2, \dots, 1/2
        \end{matrix}
        \right) = 0 \, 
\\\nonumber
 \Leftrightarrow  \,
 & t \cdot
    G^{1,N}_{N,1} \left(2^N \theta \Bigg|
        \begin{matrix}
           1/2,1/2, \dots, 1/2 \\ 0
        \end{matrix}
        \right) - \frac{M}{\theta}  \, G^{1,N}_{N,1} \left(2^N \theta \Bigg|
        \begin{matrix}
          1/2,1/2, \dots, 1/2 \\ 1
        \end{matrix}
        \right) = 0 \, ,
\end{align}
where the second equation follows from %Eq.~
\eqref{reverseG}.
However, directly solving %Eq.~
\eqref{eq:Chernoff_MXeq} for $\theta$ is still intractable.
Our idea is to approximate both Meijer G-functions appearing in said equation with the lowest order of a power-log series expansion.
In this way we obtain a good-enough but not necessarily optimal choice of $\theta$ for the bound on the CDF of $\sum_{j=1}^M Y_j$ from %Eq.~
\eqref{Eq:ChernoffMsumY}.
The necessary power-log series expansion is provided in the next Lemma.
The proof can be found in Appendix~\ref{App:Proofs}.
\begin{lemma}[Power-log series expansion related to $\sum_{j=1}^M Y_j$]\label{lemma:sumYpowerlog}
For $x \in \{0,1\}$
\begin{equation}
 G^{1,N}_{N,1} \left(z \Bigg|
        \begin{matrix}
           1/2,1/2, \dots, 1/2 \\ x
        \end{matrix}
        \right)
        =
 \sum_{k=0}^{\infty} z^{-1/2-k} \sum_{j=0}^{N-1} H_{kj}^x \cdot \sqbra{\log (z)}^j ,
\end{equation}
where
\begin{equation}
\begin{aligned}
H_{kj}^x &= \frac{(-1)^{Nk-1}}{j!} \sum_{n=j}^{N-1} \frac{(-1)^{n-j}}{(n-j)!} \cdot \Gamma^{(n-j)} (x+1/2+k) \\
&\cdot \sum_{j_1+\ldots+j_N=N-1-n}\prod_{t=1}^N \curlybra{\sum_{\ell_1+\ldots+\ell_{k+1}=j_t} \frac{\Gamma^{(\ell_{k+1})}(1)}{\ell_{k+1}!} \prod_{i=1}^{k-1} (k-i+1)^{-(\ell_i+1)}}
\end{aligned}
\end{equation}
\end{lemma}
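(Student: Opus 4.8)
The plan is to prove Lemma~\ref{lemma:sumYpowerlog} by repeating, almost verbatim, the residue computation carried out for Theorem~\ref{thm:MeijerGseries} and Lemma~\ref{lem:ResNonSimple}. The only structural change is that the rational prefactor $-s^{-1}$ appearing there (which arose from the quotient $\Gamma(-s)/\Gamma(1-s)$) is replaced by the Gamma factor produced by the new lower parameter $x$. Reading off \eqref{Hmnpq} with $m=1$, $n=p=N$, $q=1$, $a_1=\dots=a_N=1/2$ and $b_1=x$, the Mellin--Barnes integrand becomes
\begin{equation*}
  \mathcal{H}(s) = \Gamma(x+s)\,\Gamma^N(1/2-s).
\end{equation*}

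First I would invoke the Fox H-function theory~\cite[Thm.\ 1.2, 1.5]{Htransform}, exactly as in the proof of Theorem~\ref{thm:MeijerGseries}, to represent the Meijer G-function for $z>0$ as minus the sum (contour closed to the right) of the residues of $\mathcal{H}(s)z^{-s}$ at the poles that generate the decreasing powers $z^{-1/2-k}$. These are precisely the order-$N$ poles of $\Gamma^N(1/2-s)$ at $s=1/2+k$, $k\in\N_0$. The decisive simplification compared with the earlier case is that $\Gamma(x+s)$ has poles only at $s=-x-m\le 0$ for $m\in\N_0$; for $x\in\{0,1\}$ none of these coincides with a half-integer $1/2+k$. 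Hence $\Gamma(x+s)$ is analytic at every $s=1/2+k$, each such point remains a pole of order exactly $N$, and --- unlike the factor $-s^{-1}$ of Theorem~\ref{thm:MeijerGseries}, whose simple pole at $s=0$ produced the constant $\pi^{N/2}$ --- there is no extra simple pole and therefore no leading constant term.

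Next I would evaluate the residue at the order-$N$ pole following \eqref{eq:residualder}--\eqref{eq:Hkj1} line by line. Writing $(s-1/2-k)^N\mathcal{H}(s)=\mathcal{H}_1(s)\,\mathcal{H}_2(s)$ with the \emph{same} $\mathcal{H}_1(s)=\roundbra{(s-1/2-k)\,\Gamma(1/2-s)}^N$ as in \eqref{eq:defH1} and the \emph{new} factor $\mathcal{H}_2(s)=\Gamma(x+s)$, two nested applications of the Leibniz rule (first to $\mathcal{H}_1\cdot\mathcal{H}_2$, then to $\mathcal{H}_2\cdot z^{-s}$ via $[z^{-s}]^{(j)}=(-1)^j z^{-s}[\log z]^j$) reproduce the structure of \eqref{eq:limitRes} and give
\begin{equation*}
  \Res_{s=1/2+k}\sqbra{\mathcal{H}(s)z^{-s}}
  = z^{-1/2-k}\sum_{j=0}^{N-1} H_{kj}^x\,\sqbra{\log z}^j ,
\end{equation*}
where $H_{kj}^x=\frac{(-1)^j}{(N-1)!}\sum_{n=j}^{N-1}\binom{N-1}{n}\binom{n}{j}\lim_{s\to 1/2+k}\sqbra{\mathcal{H}_1(s)}^{(N-1-n)}\lim_{s\to 1/2+k}\sqbra{\mathcal{H}_2(s)}^{(n-j)}$. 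The first limit is supplied unchanged by Lemma~\ref{lem:H1s} (it depends only on $\mathcal{H}_1=f^N$, which is identical here), while the second is immediate: since $\Gamma(x+s)$ is smooth at $s=1/2+k$, one simply has $\lim_{s\to 1/2+k}\sqbra{\mathcal{H}_2(s)}^{(\ell)}=\Gamma^{(\ell)}(x+1/2+k)$, which takes the place of the factor $(-1)^{\ell+1}\ell!\,(1/2+k)^{-\ell-1}$ of the earlier proof. Substituting both limits, cancelling factorials through $\frac{(N-1-n)!}{(N-1)!}\binom{N-1}{n}\binom{n}{j}=\frac{1}{j!(n-j)!}$, and collecting the sign $(-1)^j(-1)^{N(k-1)+N-1-n}=(-1)^{Nk-1}(-1)^{n-j}$ then produces precisely the closed form claimed for $H_{kj}^x$.

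I expect the only real obstacle to be sign bookkeeping. No new analytic input beyond Lemma~\ref{lem:H1s} and the elementary derivatives of $\Gamma(x+s)$ is needed, so the argument is mechanical; the delicate point is tracking the overall sign through the two Leibniz expansions together with the convention by which the contour is closed, so that the leading coefficient carries the correct sign. I would pin this down by testing against the $N=1$ closed form
\begin{equation*}
  G^{1,1}_{1,1}\left(z \,\Bigg|\, \begin{matrix} 1/2 \\ x \end{matrix}\right) = \Gamma(x+\tfrac{1}{2})\,z^{x}\,(1+z)^{-x-1/2} ,
\end{equation*}
whose large-$z$ leading term is $\Gamma(x+1/2)\,z^{-1/2}$, and against the positivity of the moment generating function in Proposition~\ref{prop:MGF}: via the reversal identity~\eqref{reverseG} one has $\pi^{N/2}\,\EE\,\e^{-tY}=G^{1,N}_{N,1}(2^N t\,|\,\ldots)$ with $x=0$, which forces the $k=0$ coefficient to be positive. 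Both checks fix the sign of the leading term unambiguously and should be used to settle the overall sign in the final statement.
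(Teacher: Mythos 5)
Your proposal is essentially identical to the paper's own proof: the same residue representation of the Meijer G-function via the Fox H-function theory, the same factorization of the integrand into $\mathcal{H}_1(s)=\bigl((s-1/2-k)\Gamma(1/2-s)\bigr)^N$ and $\mathcal{H}_2(s)=\Gamma(x+s)$, the same double Leibniz expansion, and the same appeal to Lemma~\ref{lem:H1s}, so nothing new is needed. Your instinct to settle the overall sign against the $N=1$ closed form and the positivity of the MGF is well placed: carrying the minus from $G=-\sum_k\Res_{s=1/2+k}[\,\cdot\,]$ through the computation, one finds $\Res_{s=1/2+k}\bigl[\mathcal{H}(s)z^{-s}\bigr]=z^{-1/2-k}\sum_j H^x_{kj}[\log z]^j$ with the printed $H^x_{kj}$ (e.g.\ $H^0_{00}=-\sqrt{\pi}$ for $N=1$, whereas $G^{1,1}_{1,1}(z\,|\,1/2;0)=\Gamma(1/2)(1+z)^{-1/2}\sim+\sqrt{\pi}\,z^{-1/2}$), so the lemma's right-hand side should carry an overall minus sign — equivalently $(-1)^{Nk-1}$ should read $(-1)^{Nk}$ — and your two checks would catch exactly this discrepancy, which the paper's own proof also leaves unabsorbed.
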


Using the full expansion it is still difficult to obtain the solution of \eqref{eq:Chernoff_MXeq}. Therefore, we take the simplest approximation ($k=0$, $j=N-1$), i.e.\
\begin{equation}
 G^{1,N}_{N,1} \left(2^N \theta \Bigg|
        \begin{matrix}
           1/2,1/2, \dots, 1/2 \\ x
        \end{matrix}
        \right)
        \approx \roundbra{2^N \theta}^{-1/2} H_{0,N-1}^x \cdot \sqbra{\log (2^N \theta)}^{N-1} ,
\end{equation}
where
\begin{align}
H_{0,N-1}^0 & =\frac{-1}{(N-1)!} \Gamma(1/2) \\
H_{0,N-1}^1 & =\frac{-1}{(N-1)!} \Gamma(1+1/2) = \frac{-1}{(N-1)!} \frac{1}{2}\Gamma(1/2) = \frac{1}{2} H_{0,N-1}^0
\end{align}
and compute the corresponding solution. The approximation becomes better as the argument grows.
Thus, since $z=2^N \theta$, the approximation improves as $N$ grows.
Next, we solve the following equation for $\theta$,
\begin{equation}
 t \cdot \roundbra{2^N \theta}^{-1/2} H_{0,N-1}^0 \cdot \sqbra{\log (2^N \theta)}^{N-1} - \frac{M}{\theta} \roundbra{2^N \theta}^{-1/2}  \frac{1}{2}H_{0,N-1}^0 \cdot \sqbra{\log (2^N \theta)}^{N-1} = 0
\end{equation}
% \Leftrightarrow \,
which is equivalent to solving
\begin{equation}
 \roundbra{2^N \theta}^{-1/2} H_{0,N-1}^0 \cdot \sqbra{\log (2^N \theta)}^{N-1} \roundbra{t  - \frac{M}{2\theta }} = 0.
\end{equation}
The solutions are $\theta=\frac{1}{2^N}$ and $\theta=\frac{M}{2t}$.

Plugging this result into \eqref{Eq:ChernoffMsumY} we obtain
\begin{align}
\PP \roundbra{\sum_{j=1}^M Y_j \leq t} \leq \frac{1}{\pi^{\frac{MN}{2}}} \min & \left\{e^{\frac{t}{2^N}}   \sqbra{G^{N,1}_{1,N} \left( 1 \Bigg|
        \begin{matrix}
          1 \\ 1/2,1/2, \dots, 1/2
        \end{matrix}
        \right)}^M , \right. 
        \nonumber\\
        & \quad \quad \left.
 e^{\frac{M}{2}}  \sqbra{G^{N,1}_{1,N} \left(\frac{t}{2^{N-1}M} \Bigg|
        \begin{matrix}
          1 \\ 1/2,1/2, \dots, 1/2
        \end{matrix}
        \right)}^M
 \right\} \, .
\end{align}

%%%%%%%%%%%%%%%%%%%%%%

Similarly, we derive the Chernoff bound for the random variable $\sum_{j=1}^M Z_j$.
With Proposition~\ref{prop:MGF} we obtain
\begin{align} %\label{Eq:ChernoffMsumZ}
\PP \left( \sum_{j=1}^M Z_j \leq t \right) & \leq \min_{\theta>0}\,  \e^{\theta t} \,\sqbra{\EE  \e^{-\theta Z_1}  }^M 
\nonumber \\ \nonumber
 & \leq  \min_{\theta >0}\,  \e^{\theta t} \,\sqbra{ \frac{1}{\pi^{\frac{N+1}{2}}}
    G^{N,2}_{2,N} \left(\frac{1}{2^{N-2} \theta^2} \Bigg|
        \begin{matrix}
          1, 1/2 \\ 1/2,1/2, \dots, 1/2
        \end{matrix}
        \right) }^M \\
& = \frac{1}{\pi^{{\frac{M(N+1)}{2}}}} \min_{\theta>0}\,  \e^{\theta t} \,\sqbra{
    G^{N,2}_{2,N} \left(\frac{1}{2^{N-2} \theta^2} \Bigg|
        \begin{matrix}
         1, 1/2 \\ 1/2,1/2, \dots, 1/2
        \end{matrix}
        \right) }^M .  \label{Eq:ChernoffMsumZ}
\end{align}
Next, we define a function
\begin{equation}
f_Z(\theta) \coloneqq \e^{\theta t} \,\sqbra{
    G^{N,2}_{2,N} \left(\frac{1}{2^{N-2} \theta^2} \Bigg|
        \begin{matrix}
          1,1/2 \\ 1/2,1/2, \dots, 1/2
        \end{matrix}
        \right) }^M \, .
\end{equation}
To compute the minimizer, we calculate the corresponding derivative,
\begin{equation}
\begin{aligned}
&\phantom{=.}
\frac{d}{d \theta} f_Z(\theta)
\\
&= t \cdot \e^{\theta t} \,\sqbra{
    G^{N,2}_{2,N} \left(\frac{1}{2^{N-2} \theta^2} \Bigg|
        \begin{matrix}
          1,1/2 \\ 1/2,1/2, \dots, 1/2
        \end{matrix}
        \right) }^M  
\\ 
&\phantom{=.}
                          + M \e^{\theta t} \cdot  \sqbra{
    G^{N,2}_{2,N} \left(\frac{1}{2^{N-2} \theta^2} \Bigg|
        \begin{matrix}
          1,1/2 \\ 1/2, \dots, 1/2
        \end{matrix}
        \right) }^{M-1}
        %\\
        %& \cdot %\quad \quad \quad \cdot
        \frac{d}{d \theta}     G^{N,2}_{2,N} \left(\frac{1}{2^{N-2} \theta^2} \Bigg|
        \begin{matrix}
          1,1/2 \\ 1/2, \dots, 1/2
        \end{matrix}
        \right) \\
& = \e^{\theta t}\, \sqbra{
    G^{N,2}_{2,N} \left(\frac{1}{2^{N-2} \theta^2} \Bigg|
        \begin{matrix}
         1, 1/2 \\ 1/2,1/2, \dots, 1/2
        \end{matrix}
        \right) }^{M-1} 
\\
&\phantom{=.}  
\cdot \curlybra{t \cdot
    G^{N,2}_{2,N} \left(\frac{1}{2^{N-2} \theta^2} \Bigg|
        \begin{matrix}
         1, 1/2 \\ 1/2, \dots, 1/2
        \end{matrix}
        \right) + M \frac{d}{d \theta}    G^{N,2}_{2,N} \left(\frac{1}{2^{N-2} \theta^2} \Bigg|
        \begin{matrix}
          1, 1/2 \\ 1/2, \dots, 1/2
        \end{matrix}
        \right)}  \, .     
\end{aligned}
\end{equation}
Similarly as before, we apply \eqref{MGder} to obtain
\begin{equation}
\begin{aligned}
&\phantom{=.}
\frac{d}{d \theta}    G^{N,2}_{2,N} \left(\frac{1}{2^{N-2} \theta^2} \Bigg|
        \begin{matrix}
          1/2,1 \\ 1/2,1/2, \dots, 1/2
        \end{matrix}
        \right)
        \\
& =  \roundbra{\frac{1}{2^{N-2} \theta^2}}^{-1} \cdot \frac{-2}{2^{N-2} \theta^3}
% \\
% &\quad
\cdot
   G^{N,2}_{2,N} \left(\frac{1}{2^{N-2} \theta^2} \Bigg|
        \begin{matrix}
         0, 1/2 \\ 1/2,1/2, \dots, 1/2
        \end{matrix}
        \right)      \\
& =  \frac{-2}{\theta} \cdot
      G^{N,2}_{2,N} \left(\frac{1}{2^{N-2} \theta^2} \Bigg|
        \begin{matrix}
         0, 1/2 \\ 1/2,1/2, \dots, 1/2
        \end{matrix}
        \right) \,  .
\end{aligned}
\end{equation}
Thus, we need to find $\theta>0$ such that
\begin{equation}
 t \cdot
    G^{N,2}_{2,N} \left(\frac{1}{2^{N-2} \theta^2} \Bigg|
        \begin{matrix}
          1,1/2 \\ 1/2,1/2, \dots, 1/2
        \end{matrix}
        \right) - \frac{2M}{\theta}
        G^{N,2}_{2,N} \left(\frac{1}{2^{N-2} \theta^2} \Bigg|
        \begin{matrix}
         0, 1/2 \\ 1/2,1/2, \dots, 1/2
        \end{matrix}
        \right) = 0 
\end{equation}
which is equivalent to
\begin{equation}
 t \cdot
    G^{2,N}_{N,2} \left(2^{N-2} \theta^2 \Bigg|
        \begin{matrix}
           1/2,1/2, \dots, 1/2 \\ 0, 1/2
        \end{matrix}
        \right) - \frac{2M}{\theta}    G^{2,N}_{N,2} \left(2^{N-2} \theta^2 \Bigg|
        \begin{matrix}
         1/2,1/2, \dots, 1/2 \\ 1/2,1
        \end{matrix}
        \right) = 0 \, ,      \label{eq:Chernoff_MZ}
\end{equation}
where the second equation follows from %Eq.~
\eqref{reverseG}.
As already experienced in the analysis of the random variable $\sum_{j=1}^M Y_j$, an analytic solution for the optimal value of $\theta$ from %Eq.~
\eqref{eq:Chernoff_MZ} is infeasible.
Once again, we solve the approximate equality obtained by replacing the Meijer G-functions with their lowest order in the power-log series expansion.
In this way we obtain a good-enough but not necessarily optimal choice of $\theta$ for the bound on the CDF of $\sum_{j=1}^M Z_j$ from %Eq.~
\eqref{Eq:ChernoffMsumZ}.
We postpone the proof of the following Lemma, which contains said power-log series expansion, to Sec.~\ref{App:Proofs}.
\begin{lemma}[Power-log series expansion related to $\sum_{j=1}^M Z_j$]\label{lemma:sumZpowerlog}
For $x \in \{0,1\}$
\begin{equation}
 G^{2,N}_{N,2} \left(z \Bigg|
        \begin{matrix}
           1/2,1/2, \dots, 1/2 \\ 1/2, x
        \end{matrix}
        \right)
        =
 \sum_{k=0}^{\infty} z^{-1/2-k} \sum_{j=0}^{N-1} \bar{H}_{kj}^x \cdot \sqbra{\log (z)}^j ,
\end{equation}
where
\begin{equation}
\begin{aligned}
\bar{H}_{kj}^x 
&=
\frac{(-1)^{Nk-1}}{j!} \sum_{n=j}^{N-1} \frac{(-1)^{n-j}}{(n-j)!} \cdot \roundbra{\sum_{i=0}^{n-j} \binom{n-j}{i} \Gamma^{(n-j-i)}(1+k) \Gamma^{(i)} (x+1/2+k)} 
\\
& \phantom{=.}
\sum_{j_1+\ldots+j_N=N-1-n}\prod_{t=1}^N \curlybra{\sum_{\ell_1+\ldots+\ell_{k+1}=j_t} \frac{\Gamma^{(\ell_{k+1})}(1)}{\ell_{k+1}!} \prod_{i=1}^{k-1} (k-i+1)^{-(\ell_i+1)}}\, .
\end{aligned}
\end{equation}
\end{lemma}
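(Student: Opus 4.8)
The plan is to mirror the residue computation used for Theorem~\ref{thm:MeijerGseries} and Lemma~\ref{lem:ResNonSimple}, since the only structural change is the factor multiplying $\Gamma^N(1/2-s)$. First I would read off the Mellin--Barnes integrand \eqref{Hmnpq} for the parameters $m=2$, $n=p=N$, $q=2$ with $a_1=\dots=a_N=1/2$ and $b_1=1/2$, $b_2=x$, which gives
\begin{equation*}
\mathcal{H}^{2,N}_{N,2}(s) = \Gamma\roundbra{\tfrac12+s}\,\Gamma(x+s)\,\Gamma^N\roundbra{\tfrac12-s}.
\end{equation*}
As in the proof of Theorem~\ref{thm:MeijerGseries}, for $z>0$ the function is analytic in the relevant sector, so it equals $-\sum_{k}\Res_{s}\sqbra{\mathcal{H}^{2,N}_{N,2}(s)\,z^{-s}}$ summed over the poles lying to the right of the contour. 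The two extra factors $\Gamma(1/2+s)$ and $\Gamma(x+s)$ have poles only at $s\le 0$ (for $x\in\{0,1\}$), which are left poles and hence contribute nothing to this large-$z$ expansion; the only contributing poles are $s=1/2+k$, $k\in\NN_0$, each of order exactly $N$ from $\Gamma^N(1/2-s)$, at which the two extra factors are finite and nonzero. In particular there is no simple pole producing a constant term, consistent with the pure series in the claim.

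Next I would compute the order-$N$ residue exactly as in Lemma~\ref{lem:ResNonSimple}, writing $(s-1/2-k)^N\mathcal{H}^{2,N}_{N,2}(s) = \mathcal{H}_1(s)\,\tilde{\mathcal{H}}_2(s)$ with the same $\mathcal{H}_1(s)=\roundbra{(s-1/2-k)\Gamma(1/2-s)}^N$ as in \eqref{eq:defH1} and the new factor $\tilde{\mathcal{H}}_2(s)\coloneqq\Gamma(1/2+s)\Gamma(x+s)$. The residue formula for an $N$-th order pole together with the Leibniz rule and $\frac{\partial^i}{\partial s^i}z^{-s}=(-1)^iz^{-s}\sqbra{\log z}^i$ reproduces verbatim the manipulation \eqref{eq:limitRes}--\eqref{eq:Hkj1}, yielding
\begin{equation*}
\bar H_{kj}^x = \frac{(-1)^j}{(N-1)!}\sum_{n=j}^{N-1}\binom{N-1}{n}\binom{n}{j}\lim_{s\to 1/2+k}\sqbra{\mathcal{H}_1(s)}^{(N-1-n)}\,\lim_{s\to 1/2+k}\sqbra{\tilde{\mathcal{H}}_2(s)}^{(n-j)}.
\end{equation*}
The factor $\lim_{s\to1/2+k}\sqbra{\mathcal{H}_1(s)}^{(N-1-n)}$ is already evaluated in Lemma~\ref{lem:H1s} and can be quoted directly. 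The only genuinely new computation is the derivative of $\tilde{\mathcal{H}}_2$ at the pole: by the Leibniz rule together with $\Gamma(1/2+s)|_{s=1/2+k}=\Gamma(1+k)$ and $\Gamma(x+s)|_{s=1/2+k}=\Gamma(x+1/2+k)$,
\begin{equation*}
\lim_{s\to1/2+k}\sqbra{\tilde{\mathcal{H}}_2(s)}^{(n-j)} = \sum_{i=0}^{n-j}\binom{n-j}{i}\Gamma^{(n-j-i)}(1+k)\,\Gamma^{(i)}(x+1/2+k).
\end{equation*}
This is precisely the convolution sum appearing in the statement, and it specializes to Lemma~\ref{lemma:sumYpowerlog} when $\Gamma(1/2+s)$ is dropped (only $\Gamma^{(n-j)}(x+1/2+k)$ survives).

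Finally I would substitute the two limits, cancel the factorials via $\binom{N-1}{n}\binom{n}{j}(N-1-n)!/(N-1)! = 1/\roundbra{j!(n-j)!}$, and collect the signs: combining $(-1)^j$ with the $(-1)^{N(k-1)+N-1-n}$ from Lemma~\ref{lem:H1s} gives the overall factor $(-1)^{Nk-1}(-1)^{n-j}$, which reproduces the claimed $\bar H_{kj}^x$ after reading off the coefficient of each power $\sqbra{\log z}^j$. I do not expect a serious obstacle here, since the argument is a direct transcription of the proof of Lemma~\ref{lem:ResNonSimple} with $\mathcal{H}_2(s)=-s^{-1}$ replaced by $\tilde{\mathcal{H}}_2(s)=\Gamma(1/2+s)\Gamma(x+s)$. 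The most delicate points are verificational rather than conceptual: confirming that the extra Gamma factors introduce no right-hand poles and remain nonzero at $s=1/2+k$ (so the pole order stays $N$ and the convergent series acquires no constant term), and carefully tracking the nested Leibniz expansions and sign bookkeeping so that the inner sum over $i$ and the sum over $j_1+\dots+j_N=N-1-n$ land in the stated order.
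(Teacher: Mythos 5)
Your proposal is correct and follows essentially the same route as the paper's own proof: the identical residue expansion over the order-$N$ poles at $s=1/2+k$, the same factorization into $\mathcal{H}_1(s)$ and $\Gamma(1/2+s)\Gamma(x+s)$, the quoted evaluation of $\lim_{s\to 1/2+k}[\mathcal{H}_1(s)]^{(N-1-n)}$ from Lemma~\ref{lem:H1s}, and the Leibniz convolution for the derivatives of the extra Gamma factors. The factorial cancellation and the sign bookkeeping you describe also match the paper's computation exactly.
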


Using the full expansion it is still difficult to obtain the solution of \eqref{eq:Chernoff_MZ}. Therefore, we take the simplest approximation ($k=0$, $j=N-1$), i.e.\
\begin{equation}
 G^{2,N}_{N,2} \left(2^{N-2} \theta^2 \Bigg|
        \begin{matrix}
           1/2,1/2, \dots, 1/2 \\ 1/2, x
        \end{matrix}
        \right)
        \approx \roundbra{2^{N-2} \theta^2}^{-1/2} \bar{H}_{0,N-1}^x \cdot \sqbra{\log (2^{N-2} \theta^2)}^{N-1} ,
\end{equation}
where
\begin{align}
\bar{H}_{0,N-1}^0 & =\frac{-1}{(N-1)!} \Gamma(1)\Gamma(1/2)   \\
\bar{H}_{0,N-1}^1 & =\frac{-1}{(N-1)!} \Gamma(1) \Gamma(1+1/2) = \frac{-1}{(N-1)!} \Gamma(1) \frac{1}{2}\Gamma(1/2) = \frac{1}{2} \bar{H}_{0,N-1}^0
\end{align}
and compute the corresponding solution. The approximation of the aforementioned Meijer G-function is better as the argument grows.
As $z=2^{N-2} \theta^2$, the approximation improves with growing $N$.
Thus, plugging these approximations in \eqref{eq:Chernoff_MZ} we need to solve the following equation for $\theta$,
\begin{align}
& t \cdot \roundbra{2^{N-2} \theta^2}^{-1/2} \bar{H}_{0,N-1}^0 \cdot \sqbra{\log (2^{N-2} \theta^2)}^{N-1}  
\nonumber \\ \nonumber 
& \quad \quad - \frac{2M}{\theta} \roundbra{2^{N-2} \theta^2}^{-1/2}  \frac{1}{2}\bar{H}_{0,N-1}^0 \cdot \sqbra{\log (2^{N-2} \theta^2)}^{N-1} = 0 \\
 \Leftrightarrow \,
& \roundbra{2^{N-2} \theta^2}^{-1/2} \bar{H}_{0,N-1}^0 \cdot \sqbra{\log (2^{N-2} \theta^2)}^{N-1} \roundbra{t - \frac{M}{\theta}} = 0.
\end{align}
The solutions are $\theta=\frac{1}{2^{\frac{N-2}{2}}}=2^{-\frac{N-2}{2}}$ and $\theta=\frac{M}{t}$.

Plugging the solutions into \eqref{Eq:ChernoffMsumZ} we obtain
\begin{align}
\PP \roundbra{\sum_{j=1}^M Z_j \leq t} \leq \frac{1}{\pi^{\frac{M(N+1)}{2}}} \min & \left\{e^{t \cdot 2^{-\frac{N-2}{2}}}  \sqbra{G^{N,2}_{2,N} \left( 1 \Bigg|
        \begin{matrix}
          1,1/2 \\ 1/2,1/2, \dots, 1/2
        \end{matrix}
        \right)}^M ,\right. 
\nonumber\\
        & \left.
 e^{M}  \sqbra{G^{N,2}_{2,N} \left(\frac{t^2}{2^{N-2}M^2} \Bigg|
        \begin{matrix}
          1,1/2 \\ 1/2,1/2, \dots, 1/2
        \end{matrix}
        \right)}^M
 \right\}.
\end{align}

\end{proof}

\begin{remark}
In Figure~\ref{Fig:ComparisonChernoffY} and Figure~\ref{Fig:ComparisonChernoffZ} we compare the Chernoff bounds for the random variable $Y=\sum_{j=1}^M Y_j$ and $Z=\sum_{j=1}^M Z_j$, respectively. In particular, we compare the numerical minimum in \eqref{Eq:ChernoffMsumY} and \eqref{Eq:ChernoffMsumZ}  with the bounds obtained after truncating the power log series of the Meijer~G-functions obtained in Lemma~\ref{lemma:sumYpowerlog} and Lemma~\ref{lemma:sumZpowerlog}, respectively.
\end{remark}

%%%%%%%%%%%%%%%%%%%%%%
\section{Conclusion and outlook}
We have considered the three random variables $X$, $Y$, and $Z$ given by the products of $N$ Gaussian iid random variables, their squares, and their absolute values.
First, we have expressed their CDFs in terms of Meijer G-functions and provided the corresponding power-log series expansions.
Numerically, we demonstrated that a truncation of these series at the lowest orders yields quite tight approximations.
Second, we calculated the MGFs of $Y$ and $Z$ also in terms of Meijer G-functions.
As a consequence, all moments of $Y$ and $Z$ can be expressed in terms of these functions, which yields a new identity for certain Meijer-G functions.
We also provided the corresponding Chernoff bounds for sums of iid copies of $Y$ and $Z$.

Providing explicit error bounds for the truncated power-log series and tight upper and lower bounds to the CDFs of $X$, $Y$, and $Z$ is left for future research.
The main difficulty in this endeavor seems to be the following variant of a ``sign problem'':
The summands of the expansions of the Meijer G-functions are relatively large, have fluctuating signs and cancel out to give a small value in the end.

%%%%%%%%%%%%%%%%%%%%%%
\section{Acknowledgments}
We would like to thank David Gross for advice, Claudio Cacciapuoti for fruitful discussions on special functions, and Peter Jung for discussions on connections to compressed sensing.

The work of ZS and DS has been supported by the Excellence Initiative of the German Federal and State Governments (Grant 81), the ARO under contract W911NF-14-1-0098 (Quantum Characterization, Verification, and Validation), and the DFG projects GRO 4334/1,2 (SPP1798 CoSIP).
The work of MK was funded by the National Science Centre, Poland within the project Polonez (2015/19/P/ST2/03001) which has received funding from the European Union’s Horizon 2020 research and innovation programme under the Marie Sk{\l}odowska-Curie grant agreement No 665778.

%%%%%%%%%%%%%%%%%%%%%%
\bibliographystyle{apsrev4-1}
\bibliography{report}

%%%%%%%%%%%%%%%%%%%%%%
%%%%%%%%%%%%%%%%%%%%%%
%%%%%%%%%%%%%%%%%%%%%%
% \newpage
% \appendices
\appendix
\section*{Appendices}
\addcontentsline{toc}{section}{Appendices}
\renewcommand{\thesubsection}{\Alph{subsection}}

In the appendices we define the Meijer G-functions and state some of their basic properties (Appendix~\ref{sec:intro_meijer_g}) and prove several lemmas used throughout the paper, as well as provide the proof of  Proposition~\ref{prop:CDF} (Appendix~\ref{App:Proofs}).
% ---------------------------------------

\subsection{Introduction to Meijer G-functions}
\label{sec:intro_meijer_g}
The results of this article rely on the theory of Meijer G-functions.
In this appendix, we introduce these functions together with some of their properties.
All results presented in this appendix can be found in Refs.~\cite{bateman1953higher,bateman1954tables,NISTmathfun:2010}.

Meijer G-functions are a family of special functions in one variable that is closed under several operations including
\begin{equation*}
x \mapsto -x, \,
x \mapsto 1/x,
\text{ multiplication by } x^p,
\text{ differentiation},
\text{ and integration}.
\end{equation*}
\begin{definition}\label{Def:MeijerG}
For integers $m,n,p,q$ satisfying $0\leq m \leq q$, $0 \leq n \leq p$ and for  numbers $a_i, b_j \in \mathbb{C}$ (with $i=1,\ldots,p$; $j=1,\ldots,q$), the Meijer G-function
$G_{p,q}^{m,n}\roundbra{\, \cdot \,\, \Big| \begin{matrix} a_1,a_2, \ldots a_p \\ b_1, b_2,\ldots, b_q\end{matrix}}$ is defined by the line integral
\begin{equation}
\label{eq:the_G}
G_{p,q}^{m,n}\roundbra{z \Big| \begin{matrix} a_1,a_2, \ldots a_p \\ b_1, b_2,\ldots, b_q\end{matrix}}
=
 \frac{1}{2\pi \i} \int_{\mathcal{L}} \mathcal{H}_{p,q}^{m,n} (s) z^{-s} \, \d s \, ,
\end{equation}
with
\begin{equation}\label{Hmnpq}
 \mathcal{H}_{p,q}^{m,n}(s) \coloneqq   \frac{\prod_{j=1}^m \Gamma(b_j+s) \prod_{i=1}^n \Gamma(1-a_i-s)}{\prod_{i=n+1}^p \Gamma(a_i+s) \prod_{j=m+1}^q \Gamma(1-b_j-s)}
 \,.
\end{equation}
Here,
\begin{equation}
z^{-s}=\exp\roundbra{-s \curlybra{\log |z|+ \i \arg z}}, \quad z \neq 0, \quad \i=\sqrt{-1},
\end{equation}
where $\log|z|$ represents the natural logarithm of $|z|$ and $\arg z$ is not necessarily the principal value.
Empty products are identified with one.
The parameter vectors $a$ and $b$ need to be chosen such that the poles
\begin{equation}\label{polesB}
b_{j\ell} = -b_j - \ell \quad (j=1,2,\ldots,m;\, \ell=0,1,2,\ldots)
\end{equation}
of the gamma functions $s\mapsto \Gamma(b_j+s)$ and the poles
\begin{equation}\label{polesA}
a_{ik} = 1-a_i+k \quad (i=1,2,\ldots,n;\, k=0,1,2,\ldots)
\end{equation}
of the gamma functions $s\mapsto \Gamma(1-a_i-s)$ do not coincide, i.e.
\begin{equation}
b_j+\ell \neq a_i-k-1 \quad (i=1,\ldots,n;\, j=1,\ldots,m;\,  k,\ell=0,1,2,\ldots).
\end{equation}
The integral is taken over an infinite contour $\mathcal{L}$ that separates all  poles $b_{j\ell}$ in \eqref{polesB} to the left and $a_{ik}$ in \eqref{polesA} to the right of $\mathcal{L}$, and has one of the following forms:
\begin{enumerate}
\item $\mathcal{L}=\mathcal{L}_{-\infty}$ is a left loop situated in a horizontal strip starting at the point $-\infty+\i \phi_1$ and terminating at the point $-\infty + \i \phi_2$ with $-\infty < \phi_1 < \phi_2 < +\infty$;
\item $\mathcal{L}=\mathcal{L}_{+\infty}$ is a right loop situated in a horizontal strip starting at the point $+\infty+\i \phi_1$ and terminating at the point $+\infty + \i \phi_2$ with $-\infty < \phi_1 < \phi_2 < +\infty$;
\item $\mathcal{L}=\mathcal{L}_{\i \gamma \infty}$ is a contour starting at the point $\gamma-\i \infty$ and terminating at the point $\gamma+\i \infty$, where $\gamma \in \R$.
\end{enumerate}
\end{definition}

In this work, we exploit the following properties of Meijer G-functions:
\begin{itemize}
\item Inverse of the argument
\begin{equation}\label{reverseG}
 G_{p,q}^{m,n}  \left(z \Big|  \begin{matrix} a_1,\ldots,a_p \\ b_1,\ldots,b_q \end{matrix} \right) = G_{q,p}^{n,m}  \left(z^{-1} \Big|  \begin{matrix}  1-b_1,\ldots,1- b_q  \\ 1-a_1,\ldots,1-a_p \end{matrix} \right)
\end{equation}
\item Product with monomials
\begin{equation}\label{zrhoG}
z^{\rho} G_{p,q}^{m,n}  \left(z \Big|  \begin{matrix} a_1,\ldots,a_p \\ b_1,\ldots,b_q \end{matrix} \right) = G_{p,q}^{m,n}  \left(z \Big|  \begin{matrix} a_1+\rho,\ldots,a_p+\rho \\ b_1+\rho,\ldots,b_q+\rho \end{matrix} \right)
\end{equation}

\item Derivative of a Meijer G-function
\begin{align}\label{MGder}
z \frac{d}{dz} G_{p,q}^{m,n}  &\left(z \Big|  \begin{matrix} a_1,\ldots,a_p \\ b_1,\ldots,b_{q} \end{matrix} \right)  \nonumber \\
&=  G_{p,q}^{m,n}  \left(z \Big|  \begin{matrix} a_1-1,a_2,\ldots,a_p \\ b_1,\ldots,b_q \end{matrix} \right) %\nonumber\\ &
+ (a_1-1) G_{p,q}^{m,n}  \left(z \Big|  \begin{matrix}a_1,\ldots,a_p \\ b_1,\ldots,b_q \end{matrix} \right)
\end{align}

\item Integration of a Meijer G-function multiplied by certain polynomials
\begin{align}\label{int1xx-1G}
\int_1^{\infty} x^{-\rho} (x-1)^{\sigma-1} G_{p,q}^{m,n} & \left(\alpha x \Big|  \begin{matrix} a_1,\ldots,a_p \\ b_1,\ldots,b_q \end{matrix} \right) \, \d x  \nonumber \\
&= \Gamma(\sigma) G_{p+1,q+1}^{m+1,n}  \left(\alpha \Big|  \begin{matrix} a_1,\ldots,a_p,\rho \\ \rho-\sigma,b_1,\ldots,b_q \end{matrix} \right)
\end{align}
with conditions of validity
\begin{equation} \label{int1xx-1Gvalidity}
\begin{matrix}
& p+q < 2(m+n), & |\arg \alpha| < \roundbra{m+n - p/2 - q/2 } \pi \\
& \Re(\rho -\sigma- a_j)>-1, \quad j=1,\ldots,n, & \Re \sigma >0
\end{matrix}
\end{equation}

\item Integration of a Meijer G-function multiplied by the exponential function and a monomial
\begin{equation}\label{int0xeG}
\int_0^{\infty} x^{-\rho} \e^{-\beta x} G_{p,q}^{m,n}  \left(\alpha x \Big|  \begin{matrix} a_1,\ldots,a_p \\ b_1,\ldots,b_q \end{matrix} \right) \, \d x = \beta^{\rho-1} G_{p+1,q}^{m,n+1}  \left(\frac{\alpha}{\beta} \Big|  \begin{matrix} \rho,a_1,\ldots,a_p \\ b_1,\ldots,b_q \end{matrix} \right)
\end{equation}
with conditions of validity
\begin{equation}\label{eq:cond_v}
\begin{matrix}
& p+q < 2(m+n), & |\arg \alpha| <  \pi \roundbra{m+n - p/2 - q/2} \\
& |\arg \beta| < \pi/2             & \Re(b_j-\rho)>-1, \quad j=1,\ldots,m
\end{matrix}
\end{equation}

\item Integration of a Meijer G-function multiplied by an exponential function
\begin{align}\label{int0eG}
\int_0^{\infty} \e^{-\beta x} G_{p,q}^{m,n}  &\left(\alpha x^2 \Big|  \begin{matrix} a_1,\ldots,a_p \\ b_1,\ldots,b_q \end{matrix} \right) \, \d x \nonumber \\
&= \pi^{-1/2} \beta^{-1} G_{p+2,q}^{m,n+2}  \left(\frac{4\alpha}{\beta^2} \Big|  \begin{matrix} 0,1/2,a_1,\ldots,a_p \\ b_1,\ldots,b_q \end{matrix} \right)
\end{align}
with conditions of validity
\begin{equation} \label{int0eGvalidity}
\begin{matrix}
& p+q < 2(m+n), & |\arg \alpha| < \roundbra{m+n -p/2-q/2 } \pi \\
& |\arg \beta| < \pi/2             & \Re(b_j)>-1/2, \quad j=1,\ldots,m
\end{matrix}
\end{equation}
\end{itemize}

\subsection{Proofs of Lemmas}\label{App:Proofs}
In this section we present the proofs of several lemmas introduced previously in the main text.
We start by proving Proposition~\ref{prop:CDF}, i.e.\ the following special case of that statement (the rest has been shown previously, immediately after stating the lemma).

\begin{lemma}\label{lemma:CDFofY}
Let $\{g_i\}_{i=1}^N $ be a set of iid standard Gaussian random variables (i.e., $g_i \sim \mathcal{N}(0,1)$) and $Y \coloneqq \prod_{i=1}^N g_i^2$. Then, for any $t> 0$,
\begin{align}
\PP\roundbra{Y \leq t} &=
1 -  \frac{1}{\pi^{\frac{N}{2}}}    G_{N+1, 1}^{0,N+1} \left(\frac{2^N}{t} \Big|  \begin{matrix} 1,1/2,\ldots,1/2 \\ 0 \end{matrix} \right).
\end{align}
\end{lemma}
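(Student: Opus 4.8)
The plan is to prove the equivalent statement for the complementary CDF, namely that $\PP(Y > t) = \frac{1}{\pi^{N/2}} G_{N+1,1}^{0,N+1}\roundbra{2^N/t \mid \ldots}$ for all $t>0$, and to proceed by induction on the number of factors $N$, writing $Y_N \coloneqq \prod_{i=1}^N g_i^2$. The recursive product structure $Y_{N+1} = Y_N \cdot g_{N+1}^2$, together with the independence of $g_{N+1}$ from $Y_N$, is what drives the induction; the single analytic input is the Meijer~G integration formula \eqref{int0xeG}. A non-elementary alternative would compute the Mellin transform $\EE[Y^{s-1}] = \roundbra{2^{s-1}\Gamma(s-\tfrac12)/\sqrt\pi}^N$ directly and invert it (this is essentially Cook's route); the whole point of the induction is to avoid Mellin inversion.

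First I would treat the base case $N=1$. Here $\PP(g_1^2 > t) = \frac{2}{\sqrt{2\pi}}\int_{\sqrt t}^\infty \e^{-x^2/2}\,\d x$, and the substitution $z = x^2/2$ turns this into $\frac{1}{\sqrt\pi}\,\Gamma(\tfrac12, t/2)$ with $\Gamma(\cdot,\cdot)$ the upper incomplete gamma function. I then identify this with the claimed Meijer~G-function using the standard representation $\Gamma(a,x) = G_{1,2}^{2,0}\roundbra{x \mid \begin{matrix}1\\0,a\end{matrix}}$ followed by the inverse-argument rule \eqref{reverseG}, which sends $G_{1,2}^{2,0}\roundbra{t/2 \mid \begin{matrix}1\\0,1/2\end{matrix}}$ to $G_{2,1}^{0,2}\roundbra{2/t \mid \begin{matrix}1,1/2\\0\end{matrix}}$, exactly matching the $N=1$ instance of the formula.

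For the inductive step I would condition on $g_{N+1}^2 = w$ and use independence to write $\PP(Y_{N+1} > t) = \int_0^\infty \PP(Y_N > t/w)\, \tfrac{1}{\sqrt{2\pi w}}\e^{-w/2}\,\d w$. Substituting the induction hypothesis $\PP(Y_N > t/w) = \frac{1}{\pi^{N/2}} G_{N+1,1}^{0,N+1}\roundbra{2^N w/t \mid \ldots}$ puts the integral into exactly the shape of \eqref{int0xeG} with $\rho = \beta = \tfrac12$ and $\alpha = 2^N/t$. Applying that formula yields $\frac{1}{\pi^{N/2}} \cdot \frac{1}{\sqrt{2\pi}}\,\beta^{\rho-1}\, G_{N+2,1}^{0,N+2}\roundbra{2^{N+1}/t \mid \ldots}$, whose new top parameter row acquires the prepended $\rho=\tfrac12$, giving one $1$ and $N+1$ halves; the prefactor collapses since $\frac{1}{\sqrt{2\pi}}\beta^{\rho-1} = \frac{1}{\sqrt{2\pi}}\sqrt{2} = \frac{1}{\sqrt\pi}$, so that $\frac{1}{\pi^{N/2}}\cdot\frac{1}{\sqrt\pi} = \frac{1}{\pi^{(N+1)/2}}$ and the argument becomes $\alpha/\beta = 2^{N+1}/t$, reproducing the formula for $N+1$.

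The main obstacle is bookkeeping rather than conceptual: I must check that the hypotheses of \eqref{int0xeG} hold at each step — here $p+q = N+2 < 2N+2 = 2(m+n)$, $\abs{\arg\alpha} = \abs{\arg\beta} = 0 < \pi N/2$, and the condition on $\Re(b_j-\rho)$ is vacuous since $m=0$ — and that the parameter vector and argument emerging from the formula match the target exactly, in particular that the monomial factor $\tfrac{1}{\sqrt{2\pi w}}$ supplies precisely $\rho=\tfrac12$ and that $\alpha/\beta$ carries the correct power of $2$. The only other delicate point is the base case, where one must correctly pin down the incomplete-gamma-to-Meijer-G dictionary and the orientation of \eqref{reverseG}.
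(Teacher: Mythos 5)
Your argument is correct, and it takes a genuinely different route from the paper. The paper's proof is a direct, single-shot computation: it starts from the known Meijer-G representation of the density of $X=\prod_i g_i$ (quoted from the literature), writes $\PP(Y\le t)=1-2\int_{\sqrt t}^{\infty}f_X(x)\,\d x$, and evaluates that integral with the formula \eqref{int1xx-1G} for $\int_1^\infty x^{-\rho}(x-1)^{\sigma-1}G(\alpha x)\,\d x$, finishing with the shift rule \eqref{zrhoG} and the inversion rule \eqref{reverseG}. You instead induct on $N$ using the convolution identity $\PP(Y_{N+1}>t)=\int_0^\infty \PP(Y_N>t/w)\,\tfrac{1}{\sqrt{2\pi w}}\e^{-w/2}\,\d w$ and the Laplace-type formula \eqref{int0xeG}; your parameter bookkeeping ($\rho=\beta=\tfrac12$, $\alpha=2^N/t$, $\beta^{\rho-1}=\sqrt2$, argument $\alpha/\beta=2^{N+1}/t$) and validity checks are all correct, and the base case via $\PP(g_1^2>t)=\pi^{-1/2}\Gamma(\tfrac12,t/2)=\pi^{-1/2}G^{0,2}_{2,1}\bigl(2/t\mid\begin{smallmatrix}1,1/2\\0\end{smallmatrix}\bigr)$ checks out. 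What your approach buys is self-containedness: you never need the density formula \eqref{pdf_product_gaussians} as an external input, since the product structure itself generates the $N$ copies of the parameter $1/2$ one at a time. What the paper's approach buys is brevity and the avoidance of an induction (at the cost of citing the PDF). One small point you should make explicit: \eqref{int0xeG} prepends $\rho=\tfrac12$ to the top row, yielding $(1/2,1,1/2,\dots,1/2)$ rather than $(1,1/2,\dots,1/2)$; since all $N+2$ upper parameters lie in the first group ($n=p$), the integrand \eqref{Hmnpq} is symmetric under their permutation and the reordering is legitimate, but it deserves a sentence.
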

\begin{proof}%[of Lemma \ref{thm:CDF}]
Notice the following observation (with $X\coloneqq \prod_{i=1}^N g_i$)
\begin{equation}
\begin{aligned}
\PP\left(Y \leq t\right)&=\PP \left(X^2 \leq t\right)
= 
1 - \PP \left(X^2 \geq t\right) 
\\
&=1- \PP \left(X \geq \sqrt{t}\right) - \PP \left(X \leq -\sqrt{t}\right) \\
&=1- \int_{\sqrt{t}}^{\infty} f_X(x) \,\d x - \int_{-\infty}^{-\sqrt{t}} f_X(x) \,\d x \\
&=1- \int_{\sqrt{t}}^{\infty} f_X(x) \,\d x - \int_{\sqrt{t}}^{\infty} f_X(-x) \,\d x ,
\end{aligned}
\end{equation}
where $f_X$ denotes the probability density function (PDF) of the random variable $X$.
So, it is enough to consider the random variable $X=\prod_{i=1}^N g_i$, where $\{g_i\}_{i=1}^N$ are iid standard Gaussian random variables.
It is well-known that the PDF of $X$
is given by
\begin{equation}\label{pdf_product_gaussians}
f_X(x) = \frac{1}{(2\pi)^{N/2}} G_{0,N}^{N,0} \left(\frac{x^2}{2^N} \Big|  0 \right), \quad \text{for } x \in \mathbb{R} \, ,
\end{equation}
where $G$ denotes the Meijer G-function, see~\cite{Distribution_of_Products_70, Products_Gaunt}
(here: $m=N=q$, $n=0=p$, $a = \emptyset$, $b = (0,\dots, 0)$, $z = x^2\,2^{-N}$).
That is, (since $f_X(x)$ is an even function)
\begin{align}
\PP \left(Y \leq t\right)=\PP \left(X^2 \leq t\right)& = 1- 2\int_{\sqrt{t}}^{\infty} f_X(x) \,dx   = 1 - 2\int_{\sqrt{t}}^{\infty}\frac{1}{(2\pi)^{N/2}} G_{0,N}^{N,0} \left(\frac{x^2}{2^N} \Big|  0 \right) \, \d x
\nonumber \\
& =1- \frac{2}{(2\pi)^{N/2}} \int_{\sqrt{t}}^{\infty} G_{0,N}^{N,0} \left(\frac{x^2}{2^N} \Big|  0 \right) \,dx . \label{eq:PY<=t}
\end{align}
In the following we compute the integral
\begin{equation}
\begin{aligned}
\int_{\sqrt{t}}^{\infty} G_{0,N}^{N,0} \left(\frac{x^2}{2^N} \Big| 0 \right) \,\d x 
&=
\begin{cases} 
  v=\frac{x^2}{t} \, & \text{for } x=\sqrt{t} \,\Rightarrow v=1 
  \\ 
  \d v=\frac{2}{t}x \, \d x & x=\sqrt{tv}
\end{cases} 
\\
&=  \int_{1}^{\infty} G_{0,N}^{N,0} \left(\frac{t}{2^N} v \Big| 0 \right) \frac{t}{2} t^{-1/2} v^{-1/2} \, \d v
\\
 &=  \frac{\sqrt{t}}{2} \int_{1}^{\infty}  v^{-1/2} G_{0,N}^{N,0} \left(\frac{t}{2^N} v \Big| 0 \right)  \, \d v   .
\end{aligned}
\end{equation}
Following the notation in \eqref{int1xx-1G} we have
\begin{equation}
\begin{matrix}
\rho=1/2 & m=N & n=0 & \alpha=\frac{t}{2^N} & b_i=0 \\
\sigma=1 & p=0 & q=N. & &
\end{matrix}
\end{equation}
In order to apply the result we have to check the set of conditions of validity \eqref{int1xx-1Gvalidity}:
\begin{equation}
\begin{matrix}
0+N=p+q < 2(m+n)= 2(0+N) \\
 0=|\arg \alpha| < (m+n -p/2 -q/2) \pi = (N+0-0-N/2) \pi \\
\Re(\rho -\sigma-a_j) >-1, \quad j=1,\ldots,n, \\
1=\Re \sigma>0 .
\end{matrix}
\end{equation}
Since the conditions are satisfied, we obtain that
\begin{equation}
\begin{aligned}
\int_{\sqrt{t}}^{\infty} G_{0,N}^{N,0} \left(\frac{x^2}{2^N} \Big|  0 \right) \,dx & = \frac{\sqrt{t}}{2}\underbrace{\Gamma(1)}_{=1} G_{1, N+1}^{N+1,0} \left(\frac{t}{2^N} \Big|  \begin{matrix} 1/2 \\ -1/2, 0,\ldots, 0 \end{matrix} \right) \\
&= \frac{\sqrt{t}}{2} G_{1, N+1}^{N+1,0} \left(\frac{t}{2^N} \Big|  \begin{matrix} 1/2 \\ -1/2, 0,\ldots, 0 \end{matrix} \right).
\end{aligned}
\end{equation}
Thus, continuing the estimate \eqref{eq:PY<=t} we obtain
\begin{equation}
\begin{aligned}
\PP \left(Y \leq t\right)
& =1- \frac{2}{(2\pi)^{N/2}} \int_{\sqrt{t}}^{\infty} G_{0,N}^{N,0} \left(\frac{x^2}{2^N} \left| \right. 0 \right) \, \d x  \\
& =1- \frac{\sqrt{t}}{(2\pi)^{N/2}} G_{1, N+1}^{N+1,0} \left(\frac{t}{2^N} \Big|  \begin{matrix} 1/2 \\ -1/2, 0,\ldots, 0 \end{matrix} \right).
\end{aligned}
\end{equation}
Now using property \eqref{zrhoG} of Meijer G-function  leads to %(see Page 1034 in \cite{Tables_of_Integrals_Gradshteyn}, also Wikipedia)
to obtain
\begin{equation}
\PP \left(Y \leq t\right) = 1- \frac{1}{\pi^{N/2}} G_{1, N+1}^{N+1,0} \left(\frac{t}{2^N} \Big|  \begin{matrix} 1 \\ 0, 1/2,\ldots, 1/2 \end{matrix} \right).
\end{equation}
The claim now follows from \eqref{reverseG}.
\end{proof}

To prove Lemma~\ref{lemma:limfder} we use the following result.
\begin{lemma}\label{lemma:fder}
Define $f(s) \coloneqq (s-1/2-k) \Gamma(1/2-s)$. Then for every $j \in \N_0$ it holds that
\begin{equation}
f^{(j)}(s)=(-1)^{j-1} \sqbra{j \Gamma^{(j-1)}(1/2-s) - \Gamma^{(j)}(1/2-s)(s-1/2-k)}  .
\end{equation}
\end{lemma}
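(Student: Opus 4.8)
The plan is to treat $f$ as a product of the affine factor $p(s) \coloneqq s - 1/2 - k$ and the composite $g(s) \coloneqq \Gamma(1/2 - s)$, and to differentiate via the Leibniz rule. The only genuinely substantive ingredient is the derivative of $g$: since $s \mapsto 1/2 - s$ has derivative $-1$, the chain rule gives $g'(s) = -\Gamma'(1/2-s)$, and iterating (a trivial induction on $n$) yields
\begin{equation*}
\frac{\d^n}{\d s^n}\,\Gamma(1/2-s) = (-1)^n\,\Gamma^{(n)}(1/2-s), \qquad n \in \N_0.
\end{equation*}
The alternating sign here is the one place where care is needed; everything else is bookkeeping.

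Next I would apply the Leibniz rule to $f = p\cdot g$. Because $p$ is affine, $p^{(i)} \equiv 0$ for $i \geq 2$, so only the terms $i = 0$ and $i = 1$ survive, and using $p'(s) = 1$ together with the derivative formula above,
\begin{equation*}
f^{(j)}(s) = p(s)\,g^{(j)}(s) + j\,p'(s)\,g^{(j-1)}(s) = (s-1/2-k)(-1)^j \Gamma^{(j)}(1/2-s) + j\,(-1)^{j-1}\Gamma^{(j-1)}(1/2-s).
\end{equation*}
Factoring out $(-1)^{j-1}$ from both summands then gives exactly
\begin{equation*}
f^{(j)}(s) = (-1)^{j-1}\sqbra{\,j\,\Gamma^{(j-1)}(1/2-s) - \Gamma^{(j)}(1/2-s)\,(s-1/2-k)\,},
\end{equation*}
which is the claim. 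For $j = 0$ the second summand drops out since it carries the factor $j$, and one recovers $f(s) = (s-1/2-k)\Gamma(1/2-s)$.

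An equally short alternative is induction on $j$: the base case $j = 0$ is immediate, and differentiating the claimed expression once — again using that $\frac{\d}{\d s}\Gamma^{(j-1)}(1/2-s) = -\Gamma^{(j)}(1/2-s)$ and applying the product rule to the second summand — reproduces the formula with $j$ replaced by $j+1$. I expect no real obstacle along either route; the result is essentially a one-line consequence of the Leibniz rule, and the only thing to watch is the sign bookkeeping coming from the $(-1)$ in the inner argument $1/2 - s$.
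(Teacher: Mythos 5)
Your proposal is correct. Your primary route — writing $f = p\cdot g$ with $p(s) = s-1/2-k$ affine and $g(s) = \Gamma(1/2-s)$, establishing $g^{(n)}(s) = (-1)^n\Gamma^{(n)}(1/2-s)$, and then invoking the Leibniz rule so that only the $i=0$ and $i=1$ terms survive — is a genuinely more direct argument than the paper's, which proves the identity by induction on $j$ (base case $j=0$, then one differentiation using the product rule to pass from $j-1$ to $j$). The two are of course mathematically equivalent: the paper's inductive step is exactly the single product-rule differentiation that your Leibniz computation packages all at once, and indeed your stated "alternative" induction is precisely the paper's proof. What your main route buys is a closed-form one-step derivation with the sign bookkeeping isolated in the single identity for $g^{(n)}$; what the paper's induction buys is nothing extra here beyond perhaps making the sign propagation very explicit. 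Your handling of the $j=0$ case (the $j\,\Gamma^{(j-1)}$ term vanishes because of the factor $j$) matches the convention the paper implicitly uses.
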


\begin{proof}
We prove the above lemma by induction. For $j=0$, the statement is clearly true. Assume now that for $j-1$ it holds that
\begin{equation}
f^{(j-1)}(s)=(-1)^{j-2} \sqbra{(j-1) \Gamma^{(j-2)}(1/2-s) - \Gamma^{(j-1)}(1/2-s)(s-1/2-k)}.
\end{equation}
Then in the $j$th step we obtain
\begin{equation}
\begin{aligned}
&f^{(j)}(s) = \frac{d}{ds} f^{(j-1)}(s)  \\
&= (-1)^{j-2} \left[(j-1) \Gamma^{(j-1)}(1/2-s)\cdot(-1) \right. \\
& \quad \quad \quad \quad \left. - \roundbra{\Gamma^{(j)}(1/2-s) \cdot (-1) (s-1/2-k) + \Gamma^{(j-1)}(1/2-s)}\right] \\
&= (-1)^{j-1} \sqbra{(j-1) \Gamma^{(j-1)}(1/2-s) - \Gamma^{(j)}(1/2-s) (s-1/2-k) - \Gamma^{(j-1)}(1/2-s)} \\
&= (-1)^{j-1} \sqbra{j \Gamma^{(j-1)}(1/2-s) - \Gamma^{(j)}(1/2-s) (s-1/2-k) } ,
\end{aligned}
\end{equation}
which finishes the proof.
\end{proof}

Now we are ready to prove Lemma~\ref{lemma:limfder}.
\begin{proof}[Proof of Lemma~\ref{lemma:limfder}]
Classical results in basic complex analysis imply that a power series of the Gamma function at the simple pole $z_0$ (i.e.\ at pole $z_0$  of multiplicity one) is of the form
\begin{equation}
\Gamma(z)=\frac{1}{z-z_0} g(z),
\end{equation}
where $g$ is analytic at $z_0$ and thus can be expanded into the Taylor series around $z_0$. That is,
\begin{equation}
g(z)=(z-z_0) \Gamma(z) = \sum_{\ell=0}^{\infty} \frac{g^{(\ell)}(z_0)}{\ell!} (z-z_0)^{\ell}.
\end{equation}
Rearranging the above equation results in
\begin{equation}
\Gamma(z)= \frac{1}{z-z_0} \sum_{\ell=0}^{\infty} \frac{g^{(\ell)}(z_0)}{\ell!} (z-z_0)^{\ell}.
\end{equation}
Now, recall that the Gamma function has simple poles at $z_0=-k$, with $k \in \N_0$. Hence, we have
\begin{equation}\label{eq:GammaTaylor}
\Gamma(z)=\frac{1}{z+k} \sum_{\ell=0}^{\infty} \frac{g^{(\ell)}(-k)}{\ell!} (z+k)^{\ell},
\end{equation}
where
\begin{equation}
g(z)=(z+k) \Gamma(z) =\frac{\Gamma(z+k+1)}{ z(z+1)\cdots (z+k-1)} = \prod_{i=1}^{k+1} g_i(z)
\end{equation}
with
\begin{equation}
\begin{aligned}
& g_i(z)\coloneqq  (z+i-1)^{-1}\quad \text{ for all } i \in \sqbra{k} \\
& g_{k+1}(z)\coloneqq \Gamma(z+k+1).
\end{aligned}
\end{equation}
By induction it can be shown that the corresponding $\ell$-th derivatives (for $\ell \in \N_0$) are of the form
\begin{equation}
\begin{aligned}
& g_i^{\ell}(z)=(-1)^{\ell} \, \ell ! \, (z+i-1)^{-(\ell+1)}\quad \text{ for all } i \in \sqbra{k} \\
& g_{k+1}^{\ell}(z)=\Gamma^{(\ell)}(z+k+1).
\end{aligned}
\end{equation}
Applying the general Leibniz's rule leads to
\begin{equation}
\begin{aligned}
& g^{(\ell)}(z) = \roundbra{\prod_{i=1}^{k+1} g_i(z)}^{(\ell)} = \sum_{\ell_1+\ell_2+\ldots+\ell_{k+1}=\ell} \binom{\ell}{\ell_1,\ell_2,\ldots,\ell_{k+1}} \prod_{i=1}^{k+1} g_i^{(\ell_i)}(z) \\
& = \sum_{\ell_1+\ldots+\ell_{k+1}=\ell} \frac{\ell!}{ \ell_1! \, \cdots \, \ell_{k+1}!} \roundbra{\prod_{i=1}^k (-1)^{\ell_i} \, \ell_i!\, \roundbra{z+i-1}^{-(\ell_i+1)}} \Gamma^{(\ell_{k+1})} (z+k+1) \\
& = \sum_{\ell_1+\ldots+\ell_{k+1}=\ell} \frac{\ell!}{\ell_{k+1}!} \roundbra{\prod_{i=1}^k (-1)^{\ell_i} \cdot (-1)^{-(\ell_i+1)} \, \roundbra{-z-i+1}^{-(\ell_i+1)}} \Gamma^{(\ell_{k+1})} (z+k+1) \\
& = \sum_{\ell_1+\ldots+\ell_{k+1}=\ell} \frac{\ell!}{\ell_{k+1}!} (-1)^{-k} \roundbra{\prod_{i=1}^k  \roundbra{-z-i+1}^{-(\ell_i+1)}} \Gamma^{(\ell_{k+1})} (z+k+1) .
\end{aligned}
\end{equation}
Setting $z=-k$ in the above equation results in
\begin{align}
g^{(\ell)}(-k) & = \sum_{\ell_1+\ell_2+\ldots+\ell_{k+1}=\ell} \frac{\ell!}{\ell_{k+1}!} (-1)^{-k} \roundbra{\prod_{i=1}^k  \roundbra{k-i+1}^{-(\ell_i+1)}} \Gamma^{(\ell_{k+1})} (1) 
\nonumber\\
& = (-1)^{k} \, \ell! \,  \sum_{\ell_1+\ell_2+\ldots+\ell_{k+1}=\ell}   \roundbra{\prod_{i=1}^k  \roundbra{k-i+1}^{-(\ell_i+1)}} \frac{\Gamma^{(\ell_{k+1})} (1)}{\ell_{k+1}!} .
\end{align}
Plugging it into \eqref{eq:GammaTaylor} leads to
\begin{equation}
\begin{aligned}
\Gamma(z) & = \frac{1}{z+k} \sum_{\ell=0}^{\infty} \frac{g^{(\ell)}(-k)}{ \ell!} (z+k)^{\ell} \\
& = \frac{1}{z+k} \sum_{\ell=0}^{\infty} \roundbra{ (-1)^{k} \, \ell! \,  \sum_{\ell_1+\ldots+\ell_{k+1}=\ell}   \roundbra{\prod_{i=1}^k  \roundbra{k-i+1}^{-(\ell_i+1)}} \frac{\Gamma^{(\ell_{k+1})} (1)}{\ell_{k+1}!}} \frac{(z+k)^{\ell}}{\ell!} \\
& = \frac{(-1)^{k} }{z+k} \sum_{\ell=0}^{\infty} \roundbra{ \sum_{\ell_1+\ldots+\ell_{k+1}=\ell}   \roundbra{\prod_{i=1}^k  \roundbra{k-i+1}^{-(\ell_i+1)}} \frac{\Gamma^{(\ell_{k+1})} (1)}{\ell_{k+1}!}} (z+k)^{\ell} \\
& = (-1)^{k} \sum_{\ell=0}^{\infty} c_{k,\ell} \, (z+k)^{\ell-1} \\
& = (-1)^k \curlybra{\frac{c_{k,0}}{z+k} + c_{k,1} + c_{k,2}(z+k) + \sum_{\ell=3}^{\infty} c_{k,\ell} (z+k)^{\ell-1}},
\end{aligned}
\end{equation}
where
\begin{equation}
c_{k,\ell} : =  \sum_{\ell_1+\ell_2+\ldots+\ell_{k+1}=\ell}   \roundbra{\prod_{i=1}^k  \roundbra{k-i+1}^{-(\ell_i+1)}} \frac{\Gamma^{(\ell_{k+1})} (1)}{\ell_{k+1}!}.
\end{equation}
Next, by induction one can show that the $j$th derivative of Gamma function (with $j \in \N_0$) is of the form
\begin{equation}\nonumber
\Gamma^{(j)}(z)  
=
 (-1)^k \left\{(-1)^j \, j! \, (z+k)^{-(j+1)} c_{k,0} + j! \, c_{k,j+1}
 + \sum_{\ell=j+2}^{\infty} c_{k,\ell} \prod_{\rho=1}^j (\ell-\rho) (z+k)^{\ell-j-1}\right\},
\end{equation}
with convention that an empty product $\prod_{\rho=1}^0=1$.
Finally, we are ready to compute the limit. By Lemma \ref{lemma:fder}, the above analysis, and using the substitution $\varepsilon\coloneqq 1/2+k-s$ we obtain for~$j \in \N$
\begin{equation}\nonumber
\begin{aligned}
&\lim_{s \rightarrow 1/2+k} f^{(j)}(s)  = \lim_{s\rightarrow 1/2+k} (-1)^{j-1} \sqbra{j \Gamma^{(j-1)}(1/2-s) - \Gamma^{(j)}(1/2-s)(s-1/2-k)} \\
& = (-1)^{j-1}  \lim_{\varepsilon \rightarrow 0} \curlybra{j \Gamma^{(j-1)}(-k+\varepsilon) + \varepsilon \Gamma^{(j)}(-k+\varepsilon)} \\
& = (-1)^{j-1}  \lim_{\varepsilon \rightarrow 0} j (-1)^k \curlybra{(-1)^{j-1} \,(j-1)! \, \varepsilon^{-j} c_{k,0} + (j-1)! \, c_{k,j} + \sum_{\ell=j+1}^{\infty} c_{k,\ell} \prod_{\rho=1}^{j-1}(\ell - \rho)   \varepsilon^{\ell-j} } \\
& \quad + (-1)^{j-1} \lim_{\varepsilon \rightarrow 0} \varepsilon (-1)^k \curlybra{(-1)^j \, j! \, \varepsilon^{-(j+1)} c_{k,0} + j! \, c_{k,j+1} + \sum_{\ell=j+2}^{\infty} c_{k,\ell} \prod_{\rho=1}^{j}(\ell -\rho) \varepsilon^{\ell-j-1}} \\
& = (-1)^{k+j-1} \lim_{\varepsilon \rightarrow 0} \left\{(-1)^{j-1} \,j! \, \varepsilon^{-j} c_{k,0} + j! \, c_{k,j} + j \,\sum_{\ell=j+1}^{\infty} c_{k,\ell} \prod_{\rho=1}^{j}(\ell - \rho) \varepsilon^{\ell-j}  \right. \\
&\qquad\qquad\qquad\qquad\qquad
\left. + (-1)^j \, j! \, \varepsilon^{-j} c_{k,0} + \varepsilon \, j! \, c_{k,j+1} + \sum_{\ell=j+2}^{\infty} c_{k,\ell} \prod_{\rho=1}^{j}(\ell -\rho) \varepsilon^{\ell-j} \right\} \\
& = (-1)^{k+j-1} \, j! \, c_{k,j} \\
& = (-1)^{k+j-1} \, j! \, \sum_{\ell_1+\ell_2+\ldots+\ell_{k+1}=j}   \roundbra{\prod_{i=1}^k  \roundbra{k-i+1}^{-(\ell_i+1)}} \frac{\Gamma^{(\ell_{k+1})} (1)}{\ell_{k+1}!} \, ,
\end{aligned}
\end{equation}
which finishes the proof.
\end{proof}

Next, we provide a proof of Lemma~\ref{lemma:sumYpowerlog}. The proof is analogous to the one of Theorem~\ref{thm:MeijerGseries}.
\begin{proof}[Proof of Lemma~\ref{lemma:sumYpowerlog}]\hfill\\
By~\cite[Theorem 1.2 (iii)]{Htransform}, the Meijer G-function 
$G_{N,1}^{1,N} \left(z \Big| \begin{matrix}  1/2, 1/2, \ldots, 1/2 \\ x \end{matrix} \right)$ with $x \in \{0,1\}$ 
is an analytic function of $z$ in the sector $\abs{\arg z}<\frac{(N+1)\pi}{2}$ (this is true for $N \geq 3$ since $\arg z=0$ for $z \in \R_0^+$ and $\arg z=\pi$, for $z \in \R^-$).
Therefore, we have
\begin{equation}
G_{N,1}^{1,N} \left(z \Big| \begin{matrix} 1/2, 1/2, \ldots, 1/2 \\ x \end{matrix} \right)= -\sum_{i=1}^{N}\sum_{k=0}^{\infty} \Res_{s=a_{ik}} \sqbra{{}_{x}\mathcal{H}_{N,1}^{1,N}(s) z^{-s}},
\end{equation}
where ${}_{x}\mathcal{H}_{N,1}^{1,N}$ is given by \eqref{Hmnpq} with $m=q=1$, $n=p=N$,   $a_1=\ldots=a_{N}=1/2$, and $b_1=x$, i.e.\
\begin{equation}
{}_{x}\mathcal{H}_{N,1}^{1,N} (s)  =  \Gamma(x+s) \Gamma^N(1/2-s)
\end{equation}
and $a_{ik}=1-a_i+k$ with $k \in \NN_0$ denote the poles of $\Gamma(1-a_i-s)$.

In our scenario we only have poles $a_{1k}=1/2+k$ of order $N$ with $k \in \N_0$.  Therefore,
\begin{equation}%\label{MeijerGinRes}
G_{N,1}^{1,N} \left(z \Big| \begin{matrix} 1/2, 1/2, \ldots 1/2 \\ x \end{matrix} \right)=  \sum_{k=0}^{\infty} -\Res_{s=a_{1k}} \sqbra{{}_{x}\mathcal{H}_{N,1}^{1,N}(s) z^{-s}}.
\end{equation}
%with
Since $a_{1k}=1/2+k$ is an $N$-th order pole of $\Gamma(1/2-s)$ for all $k \in \N_0$, also the integrand ${}_{x}\mathcal{H}_{N,1}^{1,N}(s)= \Gamma(x+s) \Gamma^N(1/2-s)$ has a pole $a_{1k}$ of order $N$ for all $k \in \N_0$.
Thus
\begin{equation}
\begin{aligned}
&\Res_{s=1/2+k} \sqbra{{}_{x}\mathcal{H}_{N,1}^{1,N}(s)z^{-s}} 
\\
& = \frac{1}{(N-1)!} \lim_{s \rightarrow 1/2+k} \sqbra{(s-1/2-k)^N {}_{x}\mathcal{H}_{N,1}^{1,N}(s) z^{-s}}^{(N-1)}
\\
& = \frac{1}{(N-1)!} \lim_{s \rightarrow 1/2+k} \sqbra{(s-1/2-k)^N \cdot \roundbra{\Gamma(x+s) \Gamma^N(1/2-s)} z^{-s}}^{(N-1)}
\\
& = \frac{1}{(N-1)!} \lim_{s \rightarrow 1/2+k} \sqbra{\roundbra{\roundbra{s-1/2-k}\Gamma\roundbra{1/2-s}}^N \cdot \Gamma(x+s) z^{-s}}^{(N-1)}
\\
& = \frac{1}{(N-1)!} \lim_{s \rightarrow 1/2+k} \sqbra{\mathcal{H}_1(s) \cdot {}_{x}\mathcal{H}_2(s) z^{-s}}^{(N-1)},% \label{eq:residualder}
\end{aligned}
\end{equation}
where
\begin{equation}
\begin{aligned}
\mathcal{H}_1(s) & \coloneqq \roundbra{\roundbra{s-1/2-k}\Gamma\roundbra{1/2-s}}^N  \\ %\label{eq:defH1}\\
{}_{x}\mathcal{H}_2(s) & \coloneqq \Gamma(x+s).
\end{aligned}
\end{equation}
Similarly as before, using the Leibniz rule we obtain
\begin{align}
\Res_{s=1/2+k} \sqbra{\mathcal{H}_{N+1,1}^{0,N+1}(s)z^{-s}}
%\nonumber
%\\
%& \quad
& = z^{-1/2-k} \sum_{j=0}^{N-1}  \left\{\frac{1}{(N-1)!} \sum_{n=j}^{N-1}  (-1)^j \binom{N-1}{n}  \binom{n}{j}  \right. 
\nonumber \\ \nonumber
& \quad \cdot \left. \lim_{s \rightarrow 1/2+k}  \sqbra{\mathcal{H}_1(s)}^{(N-1-n)}  \lim_{s \rightarrow 1/2+k} \sqbra{{}_{x}\mathcal{H}_2(s)}^{(n-j)}\right\} \sqbra{\log z}^j 
\\ % \label{eq:Resdevelop} \\
&  = z^{-1/2-k} \sum_{j=0}^{N-1}  {}_{x}H_{kj} \cdot \sqbra{\log z}^j,
\end{align}
where
\begin{equation}\label{eq:xHkj1}
H_{kj}^x \coloneqq \frac{1}{(N-1)!} \sum_{n=j}^{N-1}  (-1)^j \binom{N-1}{n}  \binom{n}{j} \lim_{s \rightarrow 1/2+k}  \sqbra{\mathcal{H}_1(s)}^{(N-1-n)}  \lim_{s \rightarrow 1/2+k} \sqbra{{}_{x}\mathcal{H}_2(s)}^{(n-j)}.
\end{equation}
For ${}_{x}\mathcal{H}_2(s)=\Gamma(x+s)$ with $x \in \{0,1\}$ and $\ell \in \N_0$ it holds that
\begin{align}
{}_{x}\mathcal{H}_2^{(\ell)}(s)&= \Gamma^{(\ell)}(x+s) 
\intertext{and}
\lim_{s \rightarrow 1/2+k} {}_{x}\mathcal{H}^{(\ell)}_2(s) &= \Gamma^{(\ell)}(1/2+x+k)\quad \text{for all } k \in \N_0.
\end{align}
By \eqref{eq:limH1N-1-n}, which is a consequence of Lemma \ref{lem:H1s},
\begin{equation}
\begin{aligned}
\lim_{s \rightarrow 1/2+k} & \sqbra{\mathcal{H}_1(s)}^{(N-1-n)} = (-1)^{Nk-n-1} (N-1-n)! 
\\
& \cdot \sum_{j_1+\ldots+j_N=N-1-n} \ \prod_{t=1}^N
    \curlybra{\sum_{\ell_1+\ldots+\ell_{k+1}=j_t} \frac{\Gamma^{(\ell_{k+1})}(1)}{\ell_{k+1}!} \curlybra{\prod_{i=1}^{k-1} \roundbra{k-i+1}^{-(\ell_i+1)}}}.
\end{aligned}
\end{equation}
Plugging this into \eqref{eq:xHkj1} and simplifying leads to
\begin{equation}
\begin{aligned}
H_{kj}^x &= \frac{(-1)^{Nk-1}}{j!}  \sum_{n=j}^{N-1}  \frac{(-1)^{n-j}}{(n-j)!} \Gamma^{(n-j)}(1/2+x+k) \\
& \quad \quad \cdot  \sum_{j_1+\ldots+j_N=N-1-n} \ \prod_{t=1}^N
    \curlybra{\sum_{\ell_1+\ldots+\ell_{k+1}=j_t} \frac{\Gamma^{(\ell_{k+1})}(1)}{\ell_{k+1}!} \curlybra{\prod_{i=1}^{k-1} \roundbra{k-i+1}^{-(\ell_i+1)}}} ,
\end{aligned}
\end{equation}
which finishes the proof.
\end{proof}

Finally, we provide a proof of Lemma~\ref{lemma:sumZpowerlog}. The proof is analogous to the one of the previous lemma (Lemma~\ref{lemma:sumYpowerlog}).
\begin{proof}[Proof of Lemma~\ref{lemma:sumZpowerlog}]\hfill\\
By~\cite[Theorem 1.2 (iii)]{Htransform}, the Meijer G-function $G_{N,2}^{2,N} \left(z \Big| \begin{matrix}  1/2, 1/2, \ldots, 1/2 \\ 1/2, x \end{matrix} \right)$ with $x \in \{0,1\}$ is an analytic function of $z$ in the sector $\abs{\arg z}<\frac{(N+2)\pi}{2}$ (this is true for $N \geq 3$ since $\arg z=0$ for $z \in \R_0^+$ and $\arg z=\pi$, for $z \in \R^-$).
Therefore, we have
\begin{equation}
G_{N,2}^{2,N} \left(z \Big| \begin{matrix} 1/2, 1/2, \ldots, 1/2 \\ 1/2, x \end{matrix} \right)= -\sum_{i=1}^{N}\sum_{k=0}^{\infty} \Res_{s=a_{ik}} \sqbra{{}_{x}\mathcal{H}_{N,2}^{2,N}(s) z^{-s}},
\end{equation}
where ${}_{x}\mathcal{H}_{N,2}^{2,N}$ is given by \eqref{Hmnpq} with $m=q=2$, $n=p=N$,  $a_1=\ldots=a_{N}=1/2$, and $b_1=1/2$, $b_2=x$, i.e.\
\begin{align}
{}_{x}\mathcal{H}_{N,2}^{2,N} (s) & =  \Gamma(1/2+s) \Gamma(x+s) \Gamma^N(1/2-s)
\end{align}
and $a_{ik}=1-a_i+k$ with $k \in \NN_0$ denote the poles of $\Gamma(1-a_i-s)$.
In our scenario we have only poles $a_{1k}=1/2+k$ of order $N$ with $k \in \N_0$.  Therefore,
\begin{equation}\label{MeijerGinResZ}
G_{N,2}^{2,N} \left(z \Big| \begin{matrix} 1/2, 1/2, \ldots 1/2 \\ 1/2, x \end{matrix} \right)=  \sum_{k=0}^{\infty} -\Res_{s=a_{1k}} \sqbra{{}_{x}\mathcal{H}_{N,2}^{2,N}(s) z^{-s}}.
\end{equation}
Since $a_{1k}=1/2+k$ is an $N$-th order pole of $\Gamma(1/2-s)$ for all $k \in \N_0$, also the integrand ${}_{x}\mathcal{H}_{N,2}^{2,N}(s)= \Gamma(1/2+s) \Gamma(x+s) \Gamma^N(1/2-s)$ has a pole $a_{1k}$ of order $N$ for all $k \in \N_0$.
Thus
\begin{align}
& \Res_{s=1/2+k} \sqbra{{}_{x}\mathcal{H}_{N,2}^{2,N}(s)z^{-s}} \nonumber \\
& = \frac{1}{(N-1)!} \lim_{s \rightarrow 1/2+k} \sqbra{(s-1/2-k)^N {}_{x}\mathcal{H}_{N,2}^{2,N}(s) z^{-s}}^{(N-1)}
\nonumber
\\
& = \frac{1}{(N-1)!} \lim_{s \rightarrow 1/2+k} \sqbra{(s-1/2-k)^N \cdot \roundbra{\Gamma(1/2+s) \Gamma(x+s) \Gamma^N(1/2-s)} z^{-s}}^{(N-1)}
\nonumber
\\
& = \frac{1}{(N-1)!} \lim_{s \rightarrow 1/2+k} \sqbra{\roundbra{\roundbra{s-1/2-k}\Gamma\roundbra{1/2-s}}^N \cdot \Gamma(1/2+s) \Gamma(x+s) z^{-s}}^{(N-1)}
\nonumber
\\
& = \frac{1}{(N-1)!} \lim_{s \rightarrow 1/2+k} \sqbra{\mathcal{H}_1(s) \cdot {}_{x}\bar{\mathcal{H}}_2(s) z^{-s}}^{(N-1)}, \label{eq:residualderZ}
\end{align}
where
\begin{equation}
\begin{aligned}
\mathcal{H}_1(s) & \coloneqq \roundbra{\roundbra{s-1/2-k}\Gamma\roundbra{1/2-s}}^N \label{eq:defH1Z}\\
{}_{x}\bar{\mathcal{H}}_2(s) & \coloneqq \Gamma(1/2+s) \Gamma(x+s).
\end{aligned}
\end{equation}
Similarly as before, using the Leibniz rule we obtain
\begin{equation}
\begin{aligned}
&\Res_{s=1/2+k} \sqbra{\mathcal{H}_{N+1,1}^{0,N+1}(s)z^{-s}}
\\
& \quad = z^{-1/2-k} \sum_{j=0}^{N-1}  \left\{\frac{1}{(N-1)!} \sum_{n=j}^{N-1}  (-1)^j \binom{N-1}{n}  \binom{n}{j}  \right. \\
& \quad \quad \quad  \quad
\left. \cdot \lim_{s \rightarrow 1/2+k}  \sqbra{\mathcal{H}_1(s)}^{(N-1-n)}  \lim_{s \rightarrow 1/2+k} \sqbra{{}_{x}\bar{\mathcal{H}}_2(s)}^{(n-j)}\right\} \sqbra{\log z}^j \\ %\label{eq:Resdevelop} \\
& \quad = z^{-1/2-k} \sum_{j=0}^{N-1}  \bar{H}_{kj}^x \cdot \sqbra{\log z}^j,
\end{aligned}
\end{equation}
where
\begin{equation}\label{eq:xHkj1Z}
\bar{H}_{kj}^x \coloneqq \frac{(-1)^j}{(N-1)!} \sum_{n=j}^{N-1}   \binom{N-1}{n}  \binom{n}{j} \lim_{s \rightarrow 1/2+k}  \sqbra{\mathcal{H}_1(s)}^{(N-1-n)}  \lim_{s \rightarrow 1/2+k} \sqbra{{}_{x}\bar{\mathcal{H}}_2(s)}^{(n-j)}.
\end{equation}
For ${}_{x}\bar{\mathcal{H}}_2(s)=\Gamma(1/2+s)\Gamma(x+s)$ with $x \in \{0,1\}$ and $\ell \in \N_0$ it holds that
\begin{equation}
{}_{x}\bar{\mathcal{H}}_2^{(\ell)}(s) = \sum_{i=0}^{\ell} \binom{\ell}{i}\Gamma^{(\ell-i)}(1/2+s)\Gamma^{(i)}(x+s)
\end{equation}
 and
\begin{equation}
 \lim_{s \rightarrow 1/2+k} {}_{x}\bar{\mathcal{H}}^{(\ell)}_2(s) = \sum_{i=0}^{\ell} \binom{\ell}{i}\Gamma^{(\ell-i)}(1+k)\Gamma^{(i)}(1/2+x+k)\quad \text{for all } k \in \N_0.
\end{equation}
By \eqref{eq:limH1N-1-n}, which is a consequence of Lemma \ref{lem:H1s},
\begin{equation}
\begin{aligned}
\lim_{s \rightarrow 1/2+k} &\sqbra{\mathcal{H}_1(s)}^{(N-1-n)}  = (-1)^{Nk-n-1} (N-1-n)! \\
& \cdot \sum_{j_1+\ldots+j_N=N-1-n} \ \prod_{t=1}^N
    \curlybra{\sum_{\ell_1+\ldots+\ell_{k+1}=j_t} \frac{\Gamma^{(\ell_{k+1})}(1)}{\ell_{k+1}!} \curlybra{\prod_{i=1}^{k-1} \roundbra{k-i+1}^{-(\ell_i+1)}}} .
\end{aligned}
\end{equation}
Plugging this into \eqref{eq:xHkj1Z} and simplifying leads to
\begin{equation}
\begin{aligned}
\bar{H}_{kj}^x &= \frac{(-1)^{Nk-1}}{j!}  \sum_{n=j}^{N-1}  \frac{(-1)^{n-j}}{(n-j)!} \roundbra{\sum_{i=0}^{n-j} \binom{n-j}{i}\Gamma^{(n-j-i)}(1+k)\Gamma^{(i)}(1/2+x+k)} \\
& \quad \quad \cdot  \sum_{j_1+\ldots+j_N=N-1-n} \ \prod_{t=1}^N
    \curlybra{\sum_{\ell_1+\ldots+\ell_{k+1}=j_t} \frac{\Gamma^{(\ell_{k+1})}(1)}{\ell_{k+1}!} \curlybra{\prod_{i=1}^{k-1} \roundbra{k-i+1}^{-(\ell_i+1)}}},
\end{aligned}
\end{equation}
which finishes the proof.
\end{proof}

\end{document}